\newenvironment{leftbar}{}{}
\newtheorem{theorem}{Theorem}[section]
\newtheorem{lemma}[theorem]{Lemma}
\newtheorem{proposition}[theorem]{Proposition}
\theoremstyle{definition}
\newtheorem{definition}[theorem]{Definition}
\newtheorem{example}[theorem]{Example}
\newtheorem{remark}[theorem]{Remark}
\newtheorem{conjecture}[theorem]{Conjecture}
\DeclareMathOperator{\arsinh}{arsinh}
\DeclareMathOperator{\re}{Re}
\DeclareMathOperator{\sign}{sign}
\newcommand{\leb}{\mathscr{L}}
\newcommand{\cont}{\mathscr{C}}
\newcommand{\conto}{\cont_0}
\newcommand{\contb}{\cont_b}
\newcommand{\contbu}{\cont_{bu}}
\newcommand{\schw}{\mathscr{S}}
\newcommand{\dom}{\mathscr{D}}
\newcommand{\X}{\mathscr{X}}
\newcommand{\fourier}{\mathscr{F}}
\newcommand{\st}[1]{{\smash{\Hat{#1}}}}
\newcommand{\stt}[1]{{\smash{\check{#1}}}}
\newcommand{\tst}[1]{{$\smash{\Hat{\mbox{#1}}}$}}
\newcommand{\tstt}[1]{{$\smash{\check{\mbox{#1}}}$}}
\newcommand{\form}{\mathcal{E}}
\newcommand{\pr}{\mathbf{P}}
\newcommand{\ex}{\mathbf{E}}
\newcommand{\R}{\mathbf{R}}
\newcommand{\ind}{\mathbf{1}}
\newcommand{\ph}{\varphi}
\newcommand{\eps}{\varepsilon}
\newcommand{\tscalar}[1]{\langle #1 \rangle}
\newcommand{\set}[1]{\left\{ #1 \right\}}
\newcommand{\abs}[1]{\left| #1 \right|}
\newcommand{\expr}[1]{\left( #1 \right)}
\newcommand{\formula}[2][nolabel]%
{%
 \ifthenelse{\equal{#1}{nolabel}}%
 {\begin{align*} #2 \end{align*}}%
 {%
  \ifthenelse{\equal{#1}{}}%
  {\begin{align} #2 \end{align}}%
  {\begin{align} \label{#1} #2 \end{align}}%
 }%
}
\begin{document}

%
%

\title[Ten equivalent definitions of the fractional Laplace operator]{Ten equivalent definitions \\ of the fractional Laplace operator}
\author{Mateusz Kwaśnicki}
\thanks{Work supported from the statutory funds of Department of Pure and Applied Mathematics, Faculty of Fundamental Problems of Technology, Wrocław University of Technology}
\address{Mateusz Kwaśnicki \\ Department of Pure and Applied Mathematics \\ Wrocław University of Technology \\ ul. Wybrzeże Wyspiańskiego 27 \\ 50-370 Wrocław, Poland}
\email{mateusz.kwasnicki@pwr.edu.pl}
\date{\today}
\keywords{Fractional Laplacian; weak definition; Riesz potential; singular integral; extension technique; Bochner's subordination; Balakrishnan's formula; Dynkin's characteristic operator}
\subjclass[2010]{47G30; 35S05; 60J35}

\begin{abstract}
This article discusses several definitions of the fractional Laplace operator $(-\Delta)^{\alpha/2}$ ($\alpha \in (0, 2)$) in $\R^d$ ($d \ge 1$), also known as the Riesz fractional derivative operator, as an operator on Lebesgue spaces $\leb^p$ ($p \in [1, \infty)$), on the space $\conto$ of continuous functions vanishing at infinity and on the space $\contbu$ of bounded uniformly continuous functions. Among these definitions are ones involving singular integrals, semigroups of operators, Bochner's subordination and harmonic extensions. We collect and extend known results in order to prove that all these definitions agree: on each of the function spaces considered, the corresponding operators have common domain and they coincide on that common domain.
\end{abstract}

\maketitle

%
%

\section{Introduction}
\label{sec:intro}

We consider the fractional Laplace operator $L = -(-\Delta)^{\alpha/2}$ in $\R^d$, with $\alpha \in (0, 2)$ and $d \in \{1, 2, ...\}$. Numerous definitions of $L$ can be found in literature: as a Fourier multiplier with symbol $-|\xi|^\alpha$, as a fractional power in the sense of Bochner or Balakrishnan, as the inverse of the Riesz potential operator, as a singular integral operator, as an operator associated to an appropriate Dirichlet form, as an infinitesimal generator of an appropriate semigroup of contractions, or as the Dirichlet-to-Neumann operator for an appropriate harmonic extension problem. Equivalence of these definitions for sufficiently smooth functions is well-known and easy. The purpose of this article is to prove that, whenever meaningful, all these definitions are equivalent in the Lebesgue space $\leb^p$ for $p \in [1, \infty)$, in the space $\conto$ of continuous functions vanishing at infinity, and in the space $\contbu$ of bounded uniformly continuous functions. In order to achive this goal, we extend known results and simplify some proofs.

The literature on the above topic is rather scattered. Equivalence between semigroup definition, Bochner's formula and Balakrishnan's formula is a general result, see~\cite{bib:ms01}. Inversion of Riesz potentials is well-studied in the context of $\leb^p$ spaces (with $p \in [1, \tfrac{d}{\alpha})$) and certain classes of distributions, see~\cite{bib:l72,bib:r96,bib:s01,bib:s70}. Semigroup and singular integral definitions are also known to be equivalent, at least in $\leb^p$ for $p \in [1, \tfrac{d}{\alpha})$, see~\cite{bib:r96}. Finally, the semigroup definition on the space $\conto$ is known to be equivalent to Dynkin's pointwise definition, known as Dynkin's characteristic operator, see~\cite{bib:d65}.

In the present article we remove unnecessary restrictions and prove equivalence of the above definitions in full generality. Our proofs are elementary and mostly analytic, but they originate in potential theory (explicit expressions due to M.~Riesz) and Markov processes (Dynkin's characteristic operator). The main new ingredient consists of several relations between pointwise definitions of $L f(x)$, which are then re-used to prove norm convergence in $\leb^p$, $\conto$ and $\contbu$.

Note that we restrict $\alpha$ to $(0, 2)$, that is, we do not consider complex values of $\alpha$, nor we include the hypersingular case $\alpha > 2$. The following theorem summarises the results of the paper.

\begin{leftbar}
\begin{theorem}
\label{th:main}
Let $\X$ be any of the spaces $\leb^p$, $p \in [1, \infty)$, $\conto$ or $\contbu$, and let $f \in \X$. The following definitions of $L f \in \X$ are equivalent:
\begin{enumerate}[label={\rm (\alph*)}]
\item Fourier definition:
\formula{
 \fourier (L f)(\xi) = -|\xi|^\alpha \fourier f(\xi)
}
(if $\X = \leb^p$, $p \in [1, 2]$);
\item distributional definition:
\formula{
 \int_{\R^d} L f(y) \ph(y) dy = \int_{\R^d} f(x) L \ph(x) dx
}
for all Schwartz functions $\ph$, with $L \ph$ defined, for example, as in~(a);
\item Bochner's definition:
\formula{
 L f & = \frac{1}{|\Gamma(-\tfrac{\alpha}{2})|} \int_0^\infty (e^{t \Delta} f - f) t^{-1 - \alpha/2} dt ,
}
with the Bochner's integral of an $\X$-valued function;
\item Balakrishnan's definition:
\formula{
 L f & = \frac{\sin \tfrac{\alpha \pi}{2}}{\pi} \int_0^\infty \Delta (s I - \Delta)^{-1} f \, s^{\alpha/2 - 1} ds ,
}
with the Bochner's integral of an $\X$-valued function;
\item singular integral definition:
\formula{
 L f & = \lim_{r \to 0^+} \frac{2^\alpha \Gamma(\tfrac{d + \alpha}{2})}{\pi^{d/2} |\Gamma(-\tfrac{\alpha}{2})|} \int_{\R^d \setminus B(x, r)} \frac{f(\cdot + z) - f(\cdot)}{|z|^{d + \alpha}} \, dz ,
}
with the limit in $\X$;
\item Dynkin's definition:
\formula{
 L f & = \lim_{r \to 0^+} \frac{2^\alpha \Gamma(\tfrac{d + \alpha}{2})}{\pi^{d/2} |\Gamma(-\tfrac{\alpha}{2})|} \int_{\R^d \setminus \overline{B}(x, r)} \frac{f(\cdot + z) - f(\cdot)}{|z|^d (|z|^2 - r^2)^{\alpha/2}} \, dz ,
}
with the limit in $\X$;
\item quadratic form definition: $\tscalar{L f, \ph} = \form(f, \ph)$ for all $\ph$ in the Sobolev space $H^{\alpha/2}$, where
\formula{
 \form(f, g) & = \frac{2^\alpha \Gamma(\tfrac{d + \alpha}{2})}{2 \pi^{d/2} |\Gamma(-\tfrac{\alpha}{2})|} \int_{\R^d} \int_{\R^d} \frac{(f(y) - f(x)) (\overline{g(y)} - \overline{g(x)})}{|x - y|^{d + \alpha}} \, dx dy
}
(if $\X = \leb^2$);
\item semigroup definition:
\formula{
 L f & = \lim_{t \to 0^+} \frac{P_t f - f}{t} ,
}
where $P_t f = f * p_t$ and $\fourier p_t(\xi) = e^{-t |\xi|^\alpha}$;
\item definition as the inverse of the Riesz potential:
\formula{
 \frac{\Gamma(\tfrac{d - \alpha}{2})}{2^\alpha \pi^{d/2} \Gamma(\tfrac{\alpha}{2})} \int_{\R^d} \frac{L f(\cdot + z)}{|z|^{d - \alpha}} \, dz = -f(\cdot)
}
(if $\alpha < d$ and $\X = \leb^p$, $p \in [1, \tfrac{d}{\alpha})$);
\item definition through harmonic extensions:
\formula{
 \begin{cases}
 \Delta_x u(x, y) + \alpha^2 c_\alpha^{2 / \alpha} y^{2 - 2/\alpha} \partial_y^2 u(x, y) = 0 & \text{for $y > 0$,} \\
 u(x, 0) = f(x) , \\
 \partial_y u(x, 0) = L f(x) ,
 \end{cases}
}
where $c_\alpha = 2^{-\alpha} |\Gamma(-\tfrac{\alpha}{2})| / \Gamma(\tfrac{\alpha}{2})$ and where $u(\cdot, y)$ is a function of class $\X$ which depends continuously on $y \in [0, \infty)$ and $\|u(\cdot, y)\|_{\X}$ is bounded in $y \in [0, \infty)$. 
\end{enumerate}
In addition, in~(c), (e), (f), (h) and (j), convergence in the uniform norm can be relaxed to pointwise convergence to a function in $\X$ when $\X = \conto$ or $\X = \contbu$. Finally, for $\X = \leb^p$ with $p \in [1, \infty)$, norm convergence in~(e), (f), (h) or (j) implies pointwise convergence for almost all $x$.
\end{theorem}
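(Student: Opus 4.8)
The plan is to single out the semigroup $(P_t)_{t \ge 0}$ of definition~(h) as the central object. Since $p_t$ is a probability density — the transition density of the isotropic $\alpha$-stable Lévy process — and $p_t \to \delta_0$ as $t \to 0^+$, the operators $P_t$ form a strongly continuous semigroup of contractions on each of the spaces $\X$. Let $A$, with domain $\dom(A)$, be its infinitesimal generator; this will be the reference operator against which all other definitions are compared. By the general theory of fractional powers of semigroup generators (Bochner's subordination and Balakrishnan's formula), definitions~(c), (d) and~(h) all yield $A$ on the common domain $\dom(A)$; see \cite{bib:ms01}. On $\leb^2$ the Fourier definition~(a) identifies $A$ with the multiplier $-|\xi|^\alpha$ by Plancherel, because $\fourier(P_t f)(\xi) = e^{-t|\xi|^\alpha} \fourier f(\xi)$; the case $\leb^p$ with $p \in [1, 2]$ follows since $P_t$ is consistently defined on $\leb^1 \cap \leb^2$ and $\fourier$ is injective there. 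The quadratic form definition~(g) on $\leb^2$ is likewise immediate from Plancherel, as $\form(f, g) = \int_{\R^d} |\xi|^\alpha \fourier f(\xi) \overline{\fourier g(\xi)} \, d\xi$. For the distributional definition~(b) one uses that for Schwartz functions all definitions reduce to the classical one, the identity $\tscalar{A f, \ph} = \tscalar{f, L \ph}$ for such $\ph$, and density of Schwartz functions together with the closedness of $A$.

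The analytic heart of the argument is the treatment of the singular integral definition~(e) and of Dynkin's definition~(f). I would first establish everything pointwise: for sufficiently regular $f$ the principal value in~(e), the integral in~(f), and the subordination expression $\frac{1}{|\Gamma(-\alpha/2)|} \int_0^\infty (e^{t \Delta} f(x) - f(x)) \, t^{-1 - \alpha/2} \, dt$ all converge to the same value $A f(x)$, the normalising constants being exactly those of M.~Riesz. The new ingredient, as announced in the introduction, is a family of pointwise identities and estimates relating the truncated singular integrals of~(e) at scale $r$, the Dynkin integrals of~(f) at scale $r$, and the heat-semigroup expression; these let me bound one by the others. From them I would deduce that whenever the limit in~(e) or~(f) exists in the $\X$-norm it equals $A f$, and conversely that on $\dom(A)$ these limits exist in $\X$-norm — the truncation error being controlled, depending on $\X$, by the modulus of continuity of $f$ (for $\conto$ and $\contbu$) or by a Hardy--Littlewood maximal bound (for $\leb^p$). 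The inverse Riesz potential definition~(i), valid for $\alpha < d$ and $p \in [1, \tfrac{d}{\alpha})$, then follows by composing with the Riesz potential operator $I_\alpha$ and verifying the identity $I_\alpha A f = - f$ on a dense class, extending by the mapping properties of $I_\alpha$.

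The harmonic extension definition~(j) is handled separately. After the change of variables in the $y$ variable the equation becomes a Bessel-type ordinary differential equation in $y$ with a parameter; its bounded solution with $u(\cdot, 0) = f$ is given by an explicit subordination formula $u(\cdot, y) = f * \theta_y$, where $\fourier \theta_y(\xi)$ is a normalised Bessel function of $y |\xi|$. The delicate point is to prove that this is the \emph{unique} solution in the prescribed class — $\X$-valued, continuous up to $y = 0$, with $\|u(\cdot, y)\|_{\X}$ bounded — so that the Neumann data is well defined; differentiating in $y$ at $0$ and matching constants then gives $\partial_y u(\cdot, 0) = A f$ exactly when $f \in \dom(A)$. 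Finally, the addendum on pointwise versus norm convergence is obtained from standard facts: in $\leb^p$, norm convergence implies a.e.\ convergence along a subsequence, which combined with the pointwise identities above forces a.e.\ convergence of the whole family; for $\conto$ and $\contbu$, the same identities yield enough equicontinuity of the truncations to promote pointwise convergence to a function in $\X$ into uniform convergence.

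I expect two main obstacles. The first is making the pointwise relations between the truncated singular integrals, the Dynkin integrals and the semigroup expression quantitative enough that they transfer into $\X$-norm estimates valid on \emph{all} of $\leb^p$, $p \in [1, \infty)$, rather than only on $p \in [1, \tfrac{d}{\alpha})$ as in the existing literature. The second is the uniqueness of bounded solutions of the degenerate extension problem~(j) without additional regularity hypotheses, since it is precisely this uniqueness that lets the Neumann data be identified unambiguously with $L f$.
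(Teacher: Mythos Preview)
Your overall architecture—taking the semigroup generator $A$ as the reference and comparing each definition against it—matches the paper's, and your treatment of (a), (b), (c), (d), (g) and (i) is essentially the same: the paper quotes \cite{bib:ms01} for the equivalence of (c), (d), (h), quotes \cite{bib:s01} for (i), and handles (a), (b), (g) by Fourier and duality arguments much like yours.

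The gap is in the core step, the equivalence of (e), (f) and (h) in $\X$-norm for \emph{all} $p\in[1,\infty)$. You propose to control the truncation error ``by the modulus of continuity of $f$ (for $\conto$, $\contbu$) or by a Hardy--Littlewood maximal bound (for $\leb^p$)''. Neither is what the paper does, and both are problematic: a maximal-function bound is only weak-type at $p=1$, and the modulus of continuity of $f$ alone does not obviously dominate the principal-value remainder. The paper's device is different and sharper. It invokes M.~Riesz's explicit formulae for the Poisson kernel $\pi_r(0,z)$ and the Green function $\gamma_r(0,z)$ of the ball $B_r$ for $L$, together with Dynkin's formula, to obtain the \emph{exact identity}
\[
\int_{\R^d}(f(x+z)-f(x))\,\tilde\nu_r(z)\,dz \;-\; L_S f(x)
\;=\;\Bigl(\int_{B_r}\gamma_r(0,z)\,dz\Bigr)^{\!-1}\!\int_{B_r}\bigl(L_S f(x+z)-L_S f(x)\bigr)\,\gamma_r(0,z)\,dz,
\]
first for $f\in\dom(L_S,\contbu)$ and then, by mollification, for $f\in\dom(L_S,\leb^p)$. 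The right-hand side tends to $0$ in $\X$ simply by continuity of translations acting on $L_S f$ (not on $f$), which is available on every $\leb^p$, $p\in[1,\infty)$, and on $\conto$, $\contbu$. This yields (h)$\Rightarrow$(f) with no restriction on $p$. The reverse chain (f)$\Rightarrow$(e)$\Rightarrow$(h),(j) is obtained from integral representations of $\nu_R$ as an average of $\tilde\nu_r$, and of $p_t$ and $q_y$ as averages of $\nu_r$; the pointwise proofs of these carry over to norm statements without change. You correctly anticipated this step as your first obstacle, but the resolution is not a maximal inequality—it is the Riesz/Dynkin identity above.

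Two smaller discrepancies. For the a.e.\ convergence addendum in $\leb^p$, extracting an a.e.-convergent subsequence from norm convergence does not by itself force a.e.\ convergence of the whole family as $r\to 0^+$ (or $t\to 0^+$); the paper instead re-applies the Dynkin identity and then the Lebesgue differentiation theorem to $L_S f$. For the upgrade from pointwise to uniform convergence on $\conto$ and $\contbu$, the paper does not argue through equicontinuity of the truncations but through the positive maximum principle: the pointwise operator is an extension of $L_S$ satisfying that principle, hence $\lambda I$ minus it is injective, hence it coincides with $L_S$. Finally, your second anticipated obstacle (uniqueness for the extension problem (j)) is sidestepped in the paper: (j) is reformulated as the limit $y^{-1}(f*q_y-f)$ with the explicit kernel $q_y(z)=c_{d,\alpha}\,y\,((y/c_\alpha)^{2/\alpha}+|z|^2)^{-(d+\alpha)/2}$, after which it is treated in parallel with (h), and the uniqueness of the bounded $\X$-valued solution is only stated, not proved.
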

\end{leftbar}

We emphasize that in each definition, both $f$ and $L f$ are assumed to be in $\X$. For the detailed statements of the above definitions, we refer the reader to Section~\ref{sec:def}. Theorem~\ref{th:main} is a combination of Theorems~\ref{th:main1} and~\ref{th:main2}, and Lemma~\ref{lem:aed}.

\begin{remark}
The main novelty of the proof lies in the following observation: for a fixed $x$, the convergence in the Dynkin's definition~(h) implies convergence in the singular integral definition~(e), which in turn asserts convergence in the semigroup and harmonic extension definitions~(h) and~(j). By using this property instead of more advanced techniques found in literature, such as inversion of Riesz potentials, we are able to cover all $\leb^p$ spaces, with no restrictions on $p \in [1, \infty)$, as well as $\conto$ and $\contbu$. Probabilistic methods are perfectly suited to prove convergence in the Dynkin's definition~(h), see Section~\ref{sec:process} for examples and further discussion.
\end{remark}

\begin{remark}
The spaces $\X = \leb^\infty$ and $\X = \contb$ (the space of bounded continuous functions), not included in the theorem, can often be reduced to $\contbu$ due to the fact that the domains of $L$ defined with the singular integral, Dynkin's, semigroup or harmonic extension definitions on $\contbu$, $\contb$ and $\leb^\infty$ are all equal. Indeed, for example, consider the Dynkin's definition of $L$ on $\leb^\infty$. The convolution of any $f \in \leb^\infty$ with kernel $\tilde{\nu}_r(z) = |z|^{-d} (r^2 - |z|^2)^{-\alpha/2} \ind_{\R^d \setminus \overline{B}_r}(z)$ is uniformly continuous. Therefore, if the limit $L f$ in the Dynkin's definition exists in $\leb^\infty$ norm, then
\formula{
 L f = \lim_{r \to 0^+} (f * \tilde{\nu}_r - \|\tilde{\nu}_r\|_1 f)
}
(with the limit in $\leb^\infty$), so that in particular
\formula{
 f = \lim_{r \to 0^+} \frac{f * \tilde{\nu}_r}{\|\tilde{\nu}_r\|_1}
}
(again with the limit in $\leb^\infty$). Since the limit in $\leb^\infty$ of uniformly continuous functions is uniformly continuous, we conclude that $f \in \contbu$, and thus also $L f \in \contbu$.

An essential extension of $L$ defined with, for example, semigroup definition~(h) is possible when uniform convergence in $\contbu$ is replaced with uniform convergence on all compact subsets of $\R^d$ in $\contb$. For a detailed discussion of this concept in a much more general context of $\contb$-Feller semigroups, see~\cite[Section~4.8]{bib:j01}.
\end{remark}

We remark that in this article we restrict our attention to convergence problems for full-space definitions of the fractional Laplace operator $L$. Various results are also known for the restriction of $L$ to a domain: detailed properties of $L$ in a domain with Dirichlet condition can be found in~\cite{bib:bbkrsv09,bib:bkk08,bib:bkk15,bib:bz06,bib:k11,bib:r38a}; see also~\cite{bib:bbc03} for the study of $L$ in a domain with certain reflection. Explicit expressions for the fractional Laplace operator can be found in~\cite{bib:bgr61,bib:bkk08,bib:bz06,bib:d12,bib:dkk15,bib:g61,bib:r38a}. We also refer to a survey article~\cite{bib:ro15} and the references therein for an analytical perspective. Some applications of the fractional Laplace operator are reviewed in~\cite{bib:bv15}.  

The article is organised as follows. In Section~\ref{sec:def} we collect various definitions of the fractional Laplace operator $L$. Pointwise definitions~(c), (e), (f), (h) and~(j) for a fixed $x$ are studied Section~\ref{sec:pointwise}. In Section~\ref{sec:riesz}, M.~Riesz's explicit expressions for the harmonic measure (or the Poisson kernel) and the Green function of a ball are used to identify norm convergence in~(e), (f) and (h). Further results for norm convergence are given in Section~\ref{sec:norm}, where the main part of Theorem~\ref{th:main} is proved. Section~\ref{sec:other} collects further results: equivalence of pointwise and uniform convergence in $\conto$ and $\contbu$, almost everywhere convergence in $\leb^p$, and a sample regularity result for $L$. Finally, in Section~\ref{sec:process} we discuss the probabilistic definition of $L$ involving an isotropic $\alpha$-stable Lévy process.

The following notation is used throughout the article. 
By $\schw$ we denote the class of Schwartz functions, and $\schw'$ is the space of Schwartz distributions. Fourier transform of an integrable function $f$ is a $\conto$ function $\fourier f$ defined by
\formula{
 \fourier f(\xi) & = \int_{\R^d} e^{-i \xi \cdot x} f(x) dx .
}
Fourier transform extends continuously to an operator from $\leb^p$ to $\leb^q$ for $p \in [1, 2]$ and $\tfrac{1}{p} + \tfrac{1}{q} = 1$. It also maps $\schw$ to $\schw$, and by duality it extends to a mapping from $\schw'$ to $\schw'$. The Gauss--Weierstrass kernel (or the heat kernel) $k_t(x)$ is defined by
\formula{
 k_t(x) & = (4 \pi t)^{-d/2} \exp(-|x|^2 / (4 t)) , & \fourier k_t(\xi) & = e^{-t |\xi|^2} .
}
We denote $B_r = B(0, r)$ and $B = B_1$. Generic positive constants are denoted by $C$ (or $C(d)$, $C(d, \alpha)$ etc.). Occasionally we use the Gauss hypergeometric function $_2F_1$.

%
%

\section{Definitions of fractional Laplace operator}
\label{sec:def}

In this section we review different definitions of $L$. For each of them we discuss basic properties and provide an informal motivation.

%
%

\subsection{Fourier transform}
\label{sec:F}

In the setting of Hilbert spaces, the fractional power of a self-adjoint operator is typically defined by means of spectral theory. Observe that the Laplace operator $\Delta$ takes diagonal form in the Fourier variable: it is a Fourier multiplier with symbol $-|\xi|^2$, namely $\fourier (\Delta f)(\xi) = -|\xi|^2 \fourier f(\xi)$ for $f \in \schw$. By the spectral theorem, the operator $L = -(-\Delta)^{\alpha/2}$ also takes diagonal form in the Fourier variable: it is a Fourier multiplier with symbol $-|\xi|^\alpha$.

\begin{leftbar}
\begin{definition}[Fourier transform definition of $L$]
The fractional Laplace operator is given by
\formula{
\tag{F}\label{eq:F}
 \fourier (L_F f)(\xi) & = -|\xi|^\alpha \fourier f(\xi) .
}
More formally, let $\X = \leb^p$, where $p \in [1, 2]$. We say that $f \in \dom(L_F, \X)$ whenever $f \in \X$ and there is $L_F f \in \X$ such that~\eqref{eq:F} holds.
\end{definition}
\end{leftbar}

%
%

\subsection{Weak formulation}
\label{sec:W}

The Fourier transform of a convolution of two functions is the product of their Fourier transforms, and $-|\xi|^\alpha$ is a Schwartz distribution, so it is the Fourier transform of some $\tilde{L} \in \schw'$. Therefore, the fractional Laplace operator $L$ is the convolution operator with kernel $\tilde{L}$. Unfortunately, the convolution of two Schwartz distributions is not always well-defined. Nevertheless, the following definition is a very general one. 

\begin{leftbar}
\begin{definition}[distributional definition of $L$]
Let $\tilde{L}$ be the distribution in $\schw'$ with Fourier transform $-|\xi|^\alpha$. The weak (or distributional) fractional Laplace operator is given by
\formula{
\tag{W}\label{eq:W}
 L_W f & = \tilde{L} * f .
}
We write $f \in \dom(L_W, \schw')$ whenever $f \in \schw'$ and the convolution of $\tilde{L}$ and $f$ is well-defined in $\schw'$, that is, for all $\ph, \psi \in \schw$ the functions $\tilde{L} * \ph$ and $f * \psi$ are convolvable in the usual sense, and
\formula[eq:W:def]{
 L_W f * (\ph * \psi) & = (\tilde{L} * \ph) * (f * \psi) .
}
When $f$ and $L_W f$ both belong to $\X$, we write $f \in \dom(L_W, \X)$.
\end{definition}
\end{leftbar}

Note that $f \in \dom(L_W, \X)$ if and only if $f \in \X$ and there is $L_W f \in \X$ such that
\formula{
 L_W f * \psi = (\tilde{L} * \ph) * f .
}
Indeed, the above equality follows from~\eqref{eq:W:def} by taking $\ph = \ph_n$ to be an approximate identity and passing to the limit. Conversely, a function $L_W f \in \X$ with the above property clearly satisfies~\eqref{eq:W:def}.

For a more detailed discussion of convolvability of Schwartz distributions in this context, see~\cite[Sections~2.1 and~2.2]{bib:k11} and the references therein. Distributional definition of $L$ is also studied in~\cite{bib:bb99,bib:l72,bib:s01}, see also~\cite{bib:b98,bib:s99}.

%
%

\subsection{Bochner's subordination and Balakrishnan's formula}
\label{sec:B}

If an operator generates a strongly continuous semigroup on a Banach space (and $L$ does, see Section~\ref{sec:S}), its fractional power can be defined using Bochner's subordination. Observe that $\lambda^{\alpha/2}$ is a Bernstein function with representation
\formula{
 \lambda^{\alpha/2} & = \frac{1}{|\Gamma(-\tfrac{\alpha}{2})|} \int_0^\infty (1 - e^{-t \lambda}) t^{-1 - \alpha/2} dt
}
(this identity follows easily by integrating by parts the integral for $\Gamma(1 - \tfrac{\alpha}{2})$). Therefore, at least in the sense of spectral theory on $\leb^2$,
\formula{
 L = -(-\Delta)^{\alpha/2} & = \frac{1}{|\Gamma(-\tfrac{\alpha}{2})|} \int_0^\infty (e^{t \Delta} - 1) t^{-1 - \alpha/2} dt .
}
Here $e^{t \Delta}$ is the convolution operator with the Gauss--Weierstrass kernel $k_t(z)$.

\begin{leftbar}
\begin{definition}[Bochner definition of $L$]
The definition of the fractional Laplace operator through Bochner's subordination is given by
\formula{
\tag{B}\label{eq:B}
\begin{aligned}
 L_B f(x) & = \frac{1}{|\Gamma(-\tfrac{\alpha}{2})|} \int_0^\infty (f * k_t(x) - f(x)) t^{-1 - \alpha/2} dt \\
 & = \frac{1}{|\Gamma(-\tfrac{\alpha}{2})|} \int_0^\infty \expr{\int_{\R^d} (f(x + z) - f(x)) k_t(z) dz} t^{-1 - \alpha/2} dt .
\end{aligned}
}
More formally, we say that $f \in \dom(L_B, x)$ if the integrals converge for a given $x \in \R^d$. If $\X$ is a Banach space, $f \in \X$ and $\|f * k_t - f\|_{\X} \, t^{-1 - \alpha/2}$ is integrable in $t \in (0, \infty)$, then the first expression for $L_B f(x)$ in~\eqref{eq:B} can be understood as the Bochner's integral of a function with values in $\X$, and in this case we write $f \in \dom(L_B, \X)$.
\end{definition}
\end{leftbar}

Note that in general the order of integration in~\eqref{eq:B} cannot be changed. In particular, \eqref{eq:B} is likely the only way to interpret $L f(x)$ for some functions $f$ for which $(1 + |x|)^{-d - \alpha} f(x)$ is not integrable. The most natural examples here are harmonic polynomials: if $f$ is a harmonic polynomial, then $f * k_t(x) = f(x)$, and hence, according to~\eqref{eq:B}, $L_B f(x) = 0$ for all $x$.

A closely related approach uses the representation of $\lambda^{\alpha/2}$ as a complete Bernstein function, or an operator monotone function,
\formula{
 \lambda^{\alpha/2} & = \frac{\sin \tfrac{\alpha \pi}{2}}{\pi} \int_0^\infty \frac{\lambda}{s + \lambda} \, s^{\alpha/2 - 1} ds
}
(an identity which is typically proved using complex variable methods, or by a substitution $s = \lambda (1 - t) / t$, which reduces it to a beta integral). This way of defining the fractional power of a dissipative operator was introduced by Balakrishnan. Recall that $(s I - \Delta)^{-1}$, the resolvent of $\Delta$, is a Fourier multiplier with symbol $(|\xi|^2 + s)^{-1}$ and a convolution operator with kernel function $(2 \pi)^{-d/2} (\sqrt{s} \, x)^{1-d/2} K_{d/2-1}(\sqrt{s} \, x)$, where $K_{d/2-1}$ is the modified Bessel function of the second kind.

\begin{leftbar}
\begin{definition}[Balakrishnan definition of $L$]
The definition of the fractional Laplace operator as the Balakrishnan's fractional power is given by
\formula{
\tag{\tst{B}}\label{eq:BB}
\begin{aligned}
 L_\st{B} f(x) & = \frac{\sin \tfrac{\alpha \pi}{2}}{\pi} \int_0^\infty \Delta (s I - \Delta)^{-1} f(x) s^{\alpha/2 - 1} ds .
\end{aligned}
}
More formally, if $\X$ is a Banach space on which $\Delta$ has a strongly continuous family of resolvent operators $(s I - \Delta)^{-1}$ (with $s > 0$), $f \in \X$ and $\|\Delta (s I - \Delta)^{-1} f\|_\X s^{\alpha/2 - 1}$ is integrable in $s \in (0, \infty)$, then the integral in~\eqref{eq:BB} can be understood as the Bochner's integral of a function with values in $\X$, and in this case we write $f \in \dom(L_\st{B}, \X)$.
\end{definition}
\end{leftbar}

Among the Banach spaces considered in the present article, the family of resolvent operators is \emph{not} strongly continuous only when $\X = \leb^\infty$ or $\X = \contb$.

For more information on Bochner's subordination in the present context, see~\cite[Chapter~6]{bib:s99}, \cite[Chapter~13]{bib:ssv12} and the references therein. A complete treatment of the theory of fractional powers of operators, which includes the above two concepts in a much more general context, is given in~\cite{bib:ms01}.

%
%

\subsection{Singular integrals}
\label{sec:P}

If the order of integration in~\eqref{eq:B} could be reversed, we would have $L_B f(x) = \int_{\R^d} (f(x + z) - f(x)) \nu(z) dz$, where, by a substitution $t = |z|^2 / (4 s)$,
\formula[eq:nucalc]{
\begin{aligned}
 \nu(z) & = \frac{1}{|\Gamma(-\tfrac{\alpha}{2})|} \int_0^\infty k_t(z) t^{-1 - \alpha/2} dt \\
 & = \frac{1}{2^d \pi^{d/2} |\Gamma(-\tfrac{\alpha}{2})|} \int_0^\infty t^{-1 - (d + \alpha)/2} e^{-|z|^2 / (4 t)} dt \\
 & = \frac{2^\alpha}{\pi^{d/2} |\Gamma(-\tfrac{\alpha}{2})| \, |z|^{d + \alpha}} \int_0^\infty s^{-1 + (d + \alpha)/2} e^{-s} ds \\
 & = \frac{2^\alpha \Gamma(\tfrac{d + \alpha}{2})}{\pi^{d/2} |\Gamma(-\tfrac{\alpha}{2})| \, |z|^{d + \alpha}} \, .
\end{aligned}
}
This motivates the classical pointwise definition of the fractional Laplace operator.

\begin{leftbar}
\begin{definition}[singular integral definition of $L$]
The fractional Laplace operator is given by the Cauchy principal value integral
\formula{
\tag{I}\label{eq:I}
\begin{aligned}
 L_I f(x) & = \lim_{r \to 0^+} c_{d,\alpha} \int_{\R^d \setminus B_r} (f(x + z) - f(x)) \, \frac{1}{|z|^{d + \alpha}} \, dz \\
 & = \lim_{r \to 0^+} \int_{\R^d} (f(x + z) - f(x)) \nu_r(z) dz ,
\end{aligned}
}
where
\formula[eq:nu]{
 \nu_r(z) & = \frac{c_{d,\alpha}}{|z|^{d + \alpha}} \, \ind_{\R^d \setminus B_r}(z) ,
}
and
\formula[eq:cda]{
 c_{d,\alpha} & = \frac{2^\alpha \Gamma(\tfrac{d + \alpha}{2})}{\pi^{d/2} |\Gamma(-\tfrac{\alpha}{2})|} .
}
For later use, we denote $\nu(z) = \nu_0(z) = c_{d,\alpha} |z|^{-d - \alpha}$, and allow for negative $\alpha$ in~\eqref{eq:cda}. We write $f \in \dom(L_I, x)$ if the limit in~\eqref{eq:I} exists for a given $x \in \R^d$. We write $f \in \dom(L_I, \X)$ if $f \in \X$ and the limit in~\eqref{eq:I} exists in $\X$.
\end{definition}
\end{leftbar}

The following variant of~\eqref{eq:I} is commonly used in probability theory.

\begin{leftbar}
\begin{definition}[variant of the singular integral definition of $L$]
The fractional Laplace operator is given by
\formula{
\tag{\tst{I}}\label{eq:II}
\begin{aligned}
 L_\st{I} f(x) & = \int_{\R^d} (f(x + z) - f(x) - \nabla f(x) \cdot z \ind_B(z)) \nu(z) dz .
\end{aligned}
}
We write $f \in \dom(L_\st{I}, x)$ if $\nabla f(x)$ exists and the above integral converges absolutely.
\end{definition}
\end{leftbar}

A different regularisation of the singular integral~\eqref{eq:I} is sometimes found in analysis.

\begin{leftbar}
\begin{definition}[another variant of the singular integral definition of $L$]
The fractional Laplace operator is given by
\formula{
\tag{\tstt{I}}\label{eq:III}
\begin{aligned}
 L_\stt{I} f(x) & = \frac{1}{2} \int_{\R^d} (f(x + z) + f(x - z) - 2 f(x)) \nu(z) dz .
\end{aligned}
}
We write $f \in \dom(L_\stt{I}, x)$ if the above integral converges absolutely.
\end{definition}
\end{leftbar}

As we will see later, the following definition of $L$ as the Dynkin characteristic operator of the the isotropic $\alpha$-stable L\'evy process, although more complicated than~\eqref{eq:I}, has certain advantage. The notion of the Dynkin characteristic operator is discussed in more detail in Section~\ref{sec:process} below.

\begin{leftbar}
\begin{definition}[Dynkin definition of $L$]
The definition of the fractional Laplace operator as the Dynkin characteristic operator is given by
\formula{
\tag{D}\label{eq:D}
\begin{aligned}
 L_D f(x) & = \lim_{r \to 0^+} c_{d,\alpha} \int_{\R^d \setminus \overline{B}_r} (f(x + z) - f(x)) \, \frac{1}{|z|^d (|z|^2 - r^2)^{\alpha/2}} \, dz \\
 & = \lim_{r \to 0^+} \int_{\R^d} (f(x + z) - f(x)) \tilde{\nu}_r(z) dz ,
\end{aligned}
}
where
\formula[eq:tnu]{
 \tilde{\nu}_r(z) & = \frac{c_{d,\alpha}}{|z|^d (|z|^2 - r^2)^{\alpha/2}} \ind_{\R^d \setminus \overline{B}_r}(z) .
}
and $c_{d,\alpha}$ is given by~\eqref{eq:cda}. We write $f \in \dom(L_D, x)$ if the limit in~\eqref{eq:D} exists for a given $x \in \R^d$, and $f \in \dom(L_D, \X)$ if $f \in \X$ and the limit in~\eqref{eq:D} exists in $\X$.
\end{definition}
\end{leftbar}

%
%

\subsection{Quadratic forms}
\label{sec:Q}

A self-adjoint operator on $\leb^2$ is completely described by its quadratic form. By Fubini, for all $r > 0$ and all integrable $f, g$,
\formula{
 & \int_{\R^d} \expr{\int_{\R^d} (f(x + z) - f(x)) \nu_r(z) dz} \overline{g(x)} dx \\
 & \qquad = \int_{\R^d} \int_{\R^d} (f(y) - f(x)) \overline{g(x)} \nu_r(y - x) dy dx \\
 & \qquad = \frac{1}{2} \int_{\R^d} \int_{\R^d} (f(y) - f(x)) \overline{g(x)} \nu_r(y - x) dy dx \\
 & \qquad \qquad + \frac{1}{2} \int_{\R^d} \int_{\R^d} (f(x) - f(y)) \overline{g(y)} \nu_r(y - x) dy dx \\
 & \qquad = -\frac{1}{2} \int_{\R^d} \int_{\R^d} (f(y) - f(x)) (\overline{g(x)} - \overline{g(y)}) \nu_r(y - x) dy dx .
}
A formal limit as $r \to 0^+$ leads to the following definition.

\begin{leftbar}
\begin{definition}[quadratic form definition of $L$]
Let
\formula[eq:form]{
\begin{aligned}
 \form(f, g) & = \frac{c_{d,\alpha}}{2} \int_{\R^d} \int_{\R^d} \frac{(f(y) - f(x)) (\overline{g(y)} - \overline{g(x)})}{|x - y|^{d + \alpha}} \, dx dy \\
 & = \frac{1}{2} \int_{\R^d} \int_{\R^d} (f(y) - f(x)) (\overline{g(y)} - \overline{g(x)}) \nu(x - y) dx dy ,
\end{aligned}
}
where $\nu(z) = c_{d,\alpha} |z|^{-d - \alpha}$ and $c_{d,\alpha}$ is given by~\eqref{eq:cda}. We write $f \in \dom(\form)$ if $f \in \leb^2$ and $\form(f, f)$ is finite. We write $f \in \dom(L_Q, \leb^2)$ if $f \in \leb^2$ and there is $L_Q f \in \leb^2$ such that for all $g \in \dom(\form)$,
\formula{
\tag{Q}\label{eq:Q}
 \int_{\R^d} L_Q f(x) \overline{g(x)} dx & = -\form(f, g) .
}
\end{definition}
\end{leftbar}

We note that $\dom(\form)$ is the Sobolev space $H^{\alpha/2}$, which consists functions $f$ in $\leb^2$ such that $|\xi|^\alpha |\fourier f(\xi)|^2$ is integrable. Furthermore, $\form(f, g) = (2 \pi)^{-d} \int_{\R^d} \fourier |\xi|^\alpha f(\xi) \overline{\fourier g(\xi)} d\xi$. These properties follow easily from the expression for the Fourier transform of $p_t$ and the relation between $\form$ and $p_t$, see the proof of Lemma~\ref{lem:l2sq} below.

The quadratic form $\form$ is positive definite and it is an important example of a (non-local) Dirichlet form. For a detailed treatment of the theory of Dirichlet forms, see~\cite{bib:fot11}.

%
%

\subsection{Semigroup approach}
\label{sec:S}

By the spectral theorem, the fractional Laplace operator $L$ generates a strongly continuous semigroup of operators $P_t = e^{t L}$ on $\leb^2$, and $P_t$ is the Fourier multiplier with symbol $e^{-t |\xi|^\alpha}$. Hence, $P_t$ is the convolution operator with a symmetric kernel function $p_t(z)$, given by $\fourier p_t(\xi) = e^{-t |\xi|^\alpha}$.

We note some well-known properties of the kernel $p_t(z)$. For $\alpha = 1$, $p_t(z)$ is the Poisson kernel of the half-space in $\R^{d+1}$,
\formula[eq:cauchy]{
 p_t(z) & = \frac{\Gamma(\tfrac{d+1}{2})}{\pi^{(d + 1)/2}} \, \frac{t}{(t^2 + |z|^2)^{(d + 1)/2}} \, .
}
For arbitrary $\alpha \in (0, 2)$, $\fourier p_t$ is rapidly decreasing, and therefore $p_t$ is infinitely smooth. We also have $\fourier p_t(\xi) = \fourier p_1(t^{1/\alpha} \xi)$, and so
\formula[eq:ptscaling]{
 p_t(z) = t^{-d/\alpha} p_1(t^{-1/\alpha} z) .
}
Furthermore, by the Bochner's subordination formula, $p_t$ is the integral average of the Gauss--Weierstrass kernel $k_s$ with respect to $s \in (0, \infty)$. More precisely, let $\eta$ be a function on $(0, \infty)$ with Fourier--Laplace transform $\int_0^\infty e^{-\xi s} \eta(s) ds = \exp(-\xi^{\alpha/2})$ when $\re \xi > 0$. Then $\eta$ is smooth, positive, and $\eta(s)$ converges to $0$ as $s \to 0^+$ or $s \to \infty$, see~\cite[Remark~14.18]{bib:s99}. As one can easily verify using Fourier transform and Fubini,
\formula[eq:ptbochner]{
 p_t(z) & = t^{-2/\alpha} \int_0^\infty k_s(z) \eta(t^{-2/\alpha} s) ds .
}
Finally,
\formula[eq:heatlim]{
 \lim_{|z| \to \infty} |z|^{d + \alpha} p_1(z) & = c_{d,\alpha} , & \lim_{t \to 0^+} \frac{p_t(z)}{t} = c_{d,\alpha} |z|^{-d - \alpha}
}
with $c_{d,\alpha}$ given in~\eqref{eq:cda}. In particular
\formula[eq:ptest]{
 C_1(d,\alpha) \min(1, |z|^{-d - \alpha}) & \le p_1(z) \le C_2(d,\alpha) \min(1, |z|^{-d - \alpha}) .
}
Property~\eqref{eq:heatlim} can be easily derived using the Tauberian theory for the Laplace transform and~\eqref{eq:ptbochner}. Alternatively, one can use the Abelian--Tauberian theory for the Fourier transform (or, more precisely, for the corresponding Hankel transform), see~\cite{bib:l83,bib:ss75} and the references therein. Yet another way to show~\eqref{eq:heatlim} involves vague convergence of $t^{-1} p_t(z) dz$ to $\nu(z) dz = c_{d,\alpha} |z|^{-d - \alpha} dz$ as $t \to 0^+$, which is a general result in the theory of convolution semigroups, see~\cite{bib:s99}. Pointwise convergence is then a consequence of appropriate regularity of $p_t(z)$. We omit the details here, and refer to Lemma~\ref{lem:pis} below for a formal proof of a more detailed property of $p_t(z)$.

\begin{leftbar}
\begin{definition}[semigroup definition of $L$]
The fractional Laplace operator is given by
\formula{
\tag{S}\label{eq:S}
\begin{aligned}
 L_S f(x) & = \lim_{t \to 0^+} \frac{P_t f(x) - f(x)}{t} \\
 & = \lim_{t \to 0^+} \int_{\R^d} (f(x + z) - f(x)) \, \frac{p_t(z)}{t} \, dz .
\end{aligned}
}
Here $\fourier p_t(\xi) = e^{-t |\xi|^\alpha}$ and $P_t f(x) = f * p_t(x)$ (note that $p_t(z) = p_t(-z)$). More precisely, we write $f \in \dom(L_S, x)$ if the limit exists for a given $x \in \R^d$. If $f \in \X$ and the limit~\eqref{eq:S} exists in $\X$, then we write $f \in \dom(L_S, \X)$.
\end{definition}
\end{leftbar}

The above approach is distinguished due to the general theory of strongly continuous semigroups of operators on Banach spaces. The operators $P_t$ form a semigroup of contractions on every $\leb^p$, $p \in [1, \infty]$, and on $\conto$, $\contbu$ and $\contb$. This semigroup is strongly continuous, except on $\leb^\infty$ and $\contb$.

Suppose that $\X$ is any of the spaces $\leb^p$, $p \in [1, \infty)$, $\conto$ or $\contbu$. By the general theory, for $\lambda > 0$ the operator $\lambda I - L_S$ (with $L_S$ defined by~\eqref{eq:S} with convergence in $\X$) is a bijective map from $\dom(L_S, \X)$ onto $\X$, and its inverse is the $\lambda$-resolvent operator
\formula{
 U_\lambda f(x) & = f * u_\lambda(x) = \int_{\R^d} f(x + z) u_\lambda(z) dz ,
}
where
\formula{
 u_\lambda(z) & = \int_0^\infty e^{-\lambda t} p_t(z) dz , & \fourier u_\lambda(\xi) & = \frac{1}{\lambda + |\xi|^\alpha} \, .
}
Therefore, the fractional Laplace operator $L_S$, defined by~\eqref{eq:S} with domain $\dom_S(L, \X)$, has no essential extension $\tilde{L}$ on $\X$ such that $\lambda I - \tilde{L}$ is injective for some $\lambda > 0$; here $\X$ is any of $\leb^p$, $p \in [1, \infty)$, $\conto$ or $\contbu$. This is frequently used to prove equivalence of another definitions with the semigroup definition.

On $\conto$, injectivity of $\lambda I - \tilde{L}$ follows easily from the positive maximum principle: if $f \in \conto$ is in the domain of $\tilde{L}$ and $f(x) = \max \{ f(y) : y \in \R^d \}$, then $\tilde{L} f(x) \le 0$ (for complex-valued functions one requires that $f(x) = \max \{ |f(y)| : y \in \R^d \}$ implies $\re \tilde{L} f(x) \le 0$). Therefore, the fractional Laplace operator $L_S$, defined by~\eqref{eq:S} with domain $\dom_S(L, \conto)$, has no essential extension $\tilde{L}$ on $\conto$ which satisfies the positive maximum principle. It is probably well-known, although difficult to find in literature, that the above property extends to $\contbu$.

\begin{leftbar}
\begin{proposition}
\label{prop:pmp}
If $\tilde{L}$ is an extension of $L_S$, defined by~\eqref{eq:S} with domain $\dom(L_S, \contbu)$, and $\tilde{L}$ satisfies the positive maximum principle, then $\tilde{L}$ is in fact equal to $L_S$.
\end{proposition}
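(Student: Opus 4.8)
The plan is to reduce everything to the injectivity of $\lambda I - \tilde{L}$, using the facts about the semigroup generator already recalled in Section~\ref{sec:S}. Since $L_S$ (defined by~\eqref{eq:S} with convergence in $\contbu$) generates a strongly continuous contraction semigroup on $\contbu$, the operator $\lambda I - L_S$ maps $\dom(L_S, \contbu)$ bijectively onto $\contbu$ for every $\lambda > 0$. Granting this, it suffices to show that $\lambda I - \tilde{L}$ is injective for every $\lambda > 0$: given $f \in \dom(\tilde{L})$, put $v = \lambda f - \tilde{L} f \in \contbu$ and $f_0 = (\lambda I - L_S)^{-1} v \in \dom(L_S, \contbu) \subseteq \dom(\tilde{L})$; since $\tilde{L}$ extends $L_S$ we get $\lambda f_0 - \tilde{L} f_0 = v = \lambda f - \tilde{L} f$, hence $(\lambda I - \tilde{L})(f - f_0) = 0$, so $f = f_0 \in \dom(L_S, \contbu)$ and $\tilde{L} f = L_S f$. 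Together with $\dom(L_S, \contbu) \subseteq \dom(\tilde{L})$ this gives $\tilde{L} = L_S$.

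For the injectivity I would fix $\lambda > 0$ and a real-valued $u \in \dom(\tilde{L})$ with $\tilde{L} u = \lambda u$, and prove $\sup u \le 0$ (applying the same to $-u$ then forces $u \equiv 0$; the complex-valued case is analogous, using the positive maximum principle in the form recalled before the statement). The only real difficulty, which is exactly the point of the proposition, is that a bounded uniformly continuous function need not attain its supremum, so the positive maximum principle cannot be applied to $u$ directly. The remedy is to subtract a broad smooth bump. Suppose, for contradiction, that $M := \sup u > 0$ and pick $x_0$ with $u(x_0) > M/2$. Fix a radial $\chi \in \cont_c^\infty(\R^d)$ with $0 \le \chi \le 1$, $\chi = 1$ on $B(0,1)$, $\chi = 0$ off $B(0,2)$, and for $R > 0$ set
\formula{
 g_R & = u - M\bigl(1 - \chi((\cdot - x_0)/R)\bigr) =: u - h_R .
}
On $B(x_0, R)$ one has $g_R = u$, so $g_R(x_0) > M/2$, while off $B(x_0, 2R)$ one has $g_R = u - M \le 0$; hence the maximum of the continuous function $g_R$ over the compact ball $\overline{B}(x_0, 2R)$ is attained at some $x^*$, equals $\sup_{\R^d} g_R$, and is $> M/2 > 0$.

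Now observe that $h_R$ is a constant plus a $\cont_c^\infty$ function, hence $h_R \in \dom(L_S, \contbu) \subseteq \dom(\tilde{L})$ with $\tilde{L} h_R = L_S h_R = -M\, L_S[\chi((\cdot - x_0)/R)]$; consequently $g_R \in \dom(\tilde{L})$ and $\tilde{L} g_R = \lambda u - L_S h_R$. The positive maximum principle applied to $g_R$ at $x^*$ gives $\tilde{L} g_R(x^*) \le 0$, that is $\lambda u(x^*) \le L_S h_R(x^*)$. Since $h_R \ge 0$ we have $u(x^*) = g_R(x^*) + h_R(x^*) \ge g_R(x^*) > M/2$, so $\tfrac{1}{2}\lambda M < L_S h_R(x^*) \le \|L_S h_R\|_\infty$. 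Finally, by the homogeneity of $L$ — concretely the scaling identity $L_S[\phi(\cdot/R)] = R^{-\alpha}(L_S\phi)(\cdot/R)$, immediate from the Fourier symbol $-|\xi|^\alpha$ — one has $\|L_S h_R\|_\infty = M R^{-\alpha}\|L_S \chi\|_\infty$ with $\|L_S\chi\|_\infty < \infty$ fixed. Thus $\tfrac{1}{2}\lambda M < M R^{-\alpha}\|L_S\chi\|_\infty$ for all $R > 0$, which is absurd as $R \to \infty$. Hence $\sup u \le 0$, which proves the injectivity and the proposition.

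The main obstacle is the construction in the third paragraph: one must arrange that $g_R$ has a genuine global maximum (forcing it down to $0$ outside a large ball) while simultaneously keeping $\|L_S h_R\|_\infty$ arbitrarily small, and the scaling property of $L$ is what makes the two requirements compatible. Everything else is formal manipulation of facts established earlier, where I foresee no serious difficulty.
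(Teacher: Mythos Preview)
Your proof is correct and takes essentially the same approach as the paper: perturb the putative $\lambda$-eigenfunction by an element of $\dom(L_S,\contbu)$ with small $L_S$ so as to force a genuine global maximum, then apply the positive maximum principle to reach a contradiction. The only cosmetic difference is that the paper \emph{adds} a bump $g\in\dom(L_S,\conto)$ with $g(x)=\tfrac12$, $\|g\|_\infty=\tfrac12$, $\|L_S g\|_\infty\le\tfrac{\lambda}{2}$ whose existence it simply asserts, whereas you \emph{subtract} $h_R$ and make the smallness explicit via the scaling identity --- which is exactly how one would construct the paper's $g$.
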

\end{leftbar}

\begin{proof}
Suppose that $\lambda > 0$, $f \in \contbu$, $\|f\|_\infty > 0$, and $\lambda f - \tilde{L} f = 0$. With no loss of generality we may assume that $\|f\|_\infty = 1$ and that in fact $\sup \{ f(x) : x \in \R^d \} = 1$. Fix $x$ such that $f(x) > \tfrac{1}{2}$. Let $g \in \dom(L_S, \conto)$ be such that $g(x) = \tfrac{1}{2}$, $\|g\|_\infty = \tfrac{1}{2}$ and $\|L_S g\|_\infty \le \tfrac{\lambda}{2}$. Since $f(x) + g(x) > 1$ and $\limsup_{|x| \to \infty} |f(x) + g(x)| \le 1$, the function $f + g$ attains a global maximum at some point $y$, and $f(y) + g(y) > 1$. Since $f + g$ is in the domain of $\tilde{L}$, by the positive maximum principle we have
\formula{
 0 & \ge \tilde{L}(f + g)(y) = \tilde{L} f(y) + L_S g(y) = \lambda f(y) + L_S g(y) \\
 & > \lambda (1 - g(y)) + L_S g(y) \ge \lambda(1 - \|g\|_\infty) - \|L_S g\|_\infty \ge \tfrac{\lambda}{2} - \tfrac{\lambda}{2} = 0 ,
}
a contradiction. Therefore, $\lambda I - \tilde{L}$ is injective, and hence $\tilde{L}$ is equal to $L$.
\end{proof}

Since $\fourier u_\lambda(\xi) = (\lambda + |\xi|^\alpha)^{-1} = \tfrac{1}{\lambda} \fourier u_1(\lambda^{-1/\alpha} \xi)$, we have
\formula{
 u_\lambda(x) & = \lambda^{(d - \alpha)/\alpha} u_1(\lambda^{1/\alpha} x) .
}
If $\alpha < d$, scaling~\eqref{eq:ptscaling} and the estimate~\eqref{eq:ptest} give
\formula{
 u_1(x) & \le C_1 \int_0^{|x|^\alpha} e^{-t} t |x|^{-d - \alpha} dt + C_1 \int_{|x|^\alpha}^\infty e^{-t} t^{-d/\alpha} dt \\
 & \le C_1 \min(\tfrac{1}{2} |x|^{-d + \alpha}, |x|^{-d - \alpha}) + C_1 \min(\tfrac{\alpha}{d - \alpha} |x|^{-d + \alpha}, \tfrac{2 \alpha}{d + \alpha} |x|^{-d - \alpha}) \\
 & \le C_2 \min(|x|^{-d + \alpha}, |x|^{-d - \alpha}) ,
}
where $C_1 = C_1(d, \alpha)$, $C_2 = C_2(d, \alpha)$; here we used the estimates $\int_0^a e^{-t} t \, dt \le \int_0^a t \, dt = \tfrac{1}{2} t^2$, $\int_0^a e^{-t} t \, dt \le \int_0^\infty e^{-t} t \, dt = 1$ for the former integral and $e^{-t} \le 1$ and $e^{-t} \le \tfrac{2}{t^2}$ for the latter integral. Similar estimates hold also when $\alpha > d$ and $\alpha = d = 1$; in the end, we have
\formula[eq:potest]{
 u_1(x) & \le C(d, \alpha) \begin{cases} \min(|x|^{-d + \alpha}, |x|^{-d - \alpha}) & \text{when $\alpha < d$,} \\ \min(\log (2 + \tfrac{1}{|x|}), |x|^{-2}) & \text{when $\alpha = d = 1$,} \\ \min(1, |x|^{-d - \alpha}) & \text{when $\alpha > d = 1$.} \end{cases}
}
In particular, $u_\lambda \in \leb^p$ if and only if $\tfrac{1}{p} > \tfrac{d - \alpha}{d}$, that is, if $p \in [1, \tfrac{d}{d - \alpha})$ when $\alpha < d$, $p \in [1, \infty)$ when $\alpha = d = 1$, and $p \in [1, \infty]$ when $\alpha > d$.

The above observation implies that if $f \in \dom(L_S, \leb^p)$, then $f \in \leb^q$ for every $q \in [p, (\tfrac{1}{p} - \tfrac{\alpha}{d})^{-1})$ when $p < \tfrac{d}{\alpha}$, for every $q \in [p, \infty)$ when $p = \tfrac{d}{\alpha}$, and for every $q \in [p, \infty]$ (and even $f \in \contbu$) when $p > \tfrac{d}{\alpha}$. In the case $p < \tfrac{d}{\alpha}$ in fact $f \in \leb^q$ also for $q = (\tfrac{1}{p} - \tfrac{\alpha}{d})^{-1}$ by the weak Young's inequality, see Section~\ref{sec:R} for more details.

For a detailed treatment of semigroups of operators on Banach spaces, see, for example,~\cite{bib:ds53}. Fractional powers of generators of semigroups are studied in detail in~\cite{bib:ms01}. For further properties of the space $\dom(L_S, \leb^p)$, traditionally denoted by $\leb^p_\alpha$, see~\cite[Chapter~7]{bib:s01} and~\cite[Chapter~V]{bib:s70}. In particular, $\dom(L_S, \leb^p)$ coincides with the space of Bessel potentials of $\leb^p$ functions, see~\cite[Theorem~7.16]{bib:s01}.

%
%

\subsection{Riesz potentials}
\label{sec:R}

Suppose that $\alpha < d$, that is, $d \ge 2$ and $\alpha \in (0, 2)$ or $d = 1$ and $\alpha \in (0, 1)$. Then the function $|\xi|^{-\alpha}$ is integrable in the unit ball and bounded in the complement of the unit ball, and hence it is the Fourier transform of a tempered distribution. Using the identity
\formula{
 \lambda^{-\alpha/2} & = \frac{1}{\Gamma(\tfrac{\alpha}{2})} \int_0^\infty e^{-t \lambda} t^{-1 + \alpha/2} dt
}
and following the argument used in Section~\ref{sec:B} and in~\eqref{eq:nucalc}, one easily shows that, at least in the sense of spectral theory on $\leb^2$,
\formula{
 (-L)^{-1} f(x) & = c_{d,-\alpha} \int_{\R^d} f(x + z) |z|^{-d + \alpha} dz ,
}
with $c_{d,-\alpha}$ defined as in~\eqref{eq:cda}.

The same expression can be obtained using the general theory of strongly continuous semigroups, which tells that, at least formally, $(-L)^{-1}$ is the $0$-resolvent operator
\formula{
 U_0 f & = \int_0^\infty P_t f dt .
}
Indeed, using the Bochner's subordination formula~\eqref{eq:ptbochner}, one easily shows that
\formula{
 \int_0^\infty p_t(z) dt & = \frac{1}{\Gamma(\tfrac{\alpha}{2})} \int_0^\infty t^{-1 + \alpha/2} k_t(z) dt = c_{d,-\alpha} |z|^{-d + \alpha} ,
}
see, for example, \cite{bib:bbkrsv09} for further details. We remark that under appropriate conditions, $U_0$ is the inverse of $-L_S$ in the general context of generators of semigroups of contractions on Banach spaces. To be specific, let $P_t$ form such a semigroup on $\X$, and suppose that $P_t f$ converges to $0$ as $t \to \infty$ for all $f \in \X$ (in our case this is true when $\X = \leb^p$, with $p \in [1, \infty)$, or $\X = \conto$). If $f \in \dom(L_S, \X)$, then
\formula{
 \lim_{t \to \infty} \int_0^t P_s L_S f ds & = \lim_{t \to \infty} (P_t f - f) = -f
}
(see Section~\ref{sec:process} for further details). Hence $U_0 L_S f = -f$ whenever the norm of $P_t L_S f$ is integrable with respect to $t > 0$. Conversely, if $g \in \X$ and the norm of $P_t g$ is an integrable function of $t$, then
\formula[eq:li]{
 L_S I_\alpha g & = \lim_{t \to 0^+} \frac{1}{t} \expr{\int_t^\infty - \int_0^\infty} P_s g ds = -\lim_{t \to 0^+} \frac{1}{t} \int_0^t P_s g ds = -g .
}
A pointwise version of the above identity is also true; see Proposition~\ref{prop:pot:gen} for a slightly more general statement.

The above considerations motivate the following definition.

\begin{leftbar}
\begin{definition}[Riesz potential definition of $L$]
If $\alpha < d$, then the fractional Laplace operator is the inverse of the Riesz potential, namely $L_R f = -g$ whenever
\formula{
\tag{R}\label{eq:R}
 f(x) & = I_\alpha g(x) = c_{d,-\alpha} \int_{\R^d} g(x + z) |z|^{-d + \alpha} dz ,
}
with $c_{d,-\alpha}$ defined as in~\eqref{eq:cda}. More precisely, if $f, g \in \X$, the integral in~\eqref{eq:R} is finite and the equality therein holds for all $x \in \R^d$ (when $\X = \conto$) or for almost all $x \in \R^d$ (when $\X = \leb^p$ with $p \in [1, \infty]$), then we write $f \in \dom(L_R, \X)$.
\end{definition}
\end{leftbar}

Note that if $f \in \leb^p$ and $p \in [1, \tfrac{d}{\alpha})$, then the convolution $I_\alpha f$ of $f$ and $|z|^{-d + \alpha}$ is in $\leb^p + \leb^\infty$ (because the convolution with $|z|^{-d + \alpha} \ind_B(z)$ is in $\leb^p$, while the convolution with $|z|^{-d + \alpha} \ind_{\R^d \setminus B}$ is in $\leb^\infty$). Using weak Young's inequality, one proves that in fact $I_\alpha$ continuously maps $\leb^p$ into $\leb^q$ if (and only if) $\tfrac{1}{q} = \tfrac{1}{p} - \tfrac{\alpha}{d}$, $p \in (1, \infty)$, see~\cite[Theorem~1 in Chapter~V]{bib:s70}. Therefore, if $p < \tfrac{d}{\alpha}$, $q = (\tfrac{1}{p} - \tfrac{\alpha}{d})^{-1}$ and $f \in \dom(L_R, \leb^p)$, then $f \in \leb^p \cap \leb^q$. For this reason it is sometimes more convenient to consider $I_\alpha$ as an operator from $\leb^p$ to $\leb^q$, and its inverse $L_R$ as an operator from $\leb^q$ to $\leb^p$; see~[Section~7.3]\cite{bib:s01} for details.

For $p \in (\tfrac{d}{\alpha}, \infty]$, $I_\alpha f$ can be defined in the appropriate space of distributions, see~\cite[Section~I.1]{bib:l72} and~\cite[Section~7.1]{bib:s01} for more details. We remark that this extension can give a similar description of the domain $\dom(L_R, \leb^p)$ as in Theorem~\ref{th:main} for general $p \in [1, \infty)$, see~\cite[Theorem~7.18]{bib:s01}.

For the detailed analysis of Riesz potential operators $I_\alpha$, we refer to~\cite{bib:l72,bib:r38a,bib:r38b,bib:s01,bib:s70}.

%
%

\subsection{Harmonic extensions}
\label{sec:H}

For $\lambda \ge 0$, there is exactly one solution $\ph_\lambda$ of the second order ordinary differential equation:
\formula[eq:ode]{
 \alpha^2 c_\alpha^{2 / \alpha} y^{2 - 2/\alpha} \partial_y^2 \ph_\lambda(y) & = \lambda \ph_\lambda(y)
}
which is non-negative, continuous and bounded on $[0, \infty)$, and which satisfies $\ph_\lambda(0) = 1$. Here $c_\alpha = 2^{-\alpha} |\Gamma(-\tfrac{\alpha}{2})| / \Gamma(\tfrac{\alpha}{2})$. This solution is given by
\formula{
 \ph_\lambda(y) & = \frac{2^{1-\alpha/2}}{\Gamma(\tfrac{\alpha}{2})} \expr{\frac{\lambda^{\alpha/2} y}{c_\alpha}}^{1/2} K_{\alpha/2}\expr{\expr{\frac{\lambda^{\alpha/2} y}{c_\alpha}}^{1/\alpha}} ,
}
where $K_{\alpha/2}$ is the modified Bessel function of the second kind (by continuity, we let $\ph_\lambda(0) = \ph_0(y) = 1$).  By a simple calculation, we have $\ph_\lambda'(0) = -\lambda^{\alpha/2}$. This property is crucial for the extension technique, described below.

Before we proceed, we note that for $\alpha = 1$ the equation is simply \smash{$\partial_y^2 \ph_\lambda(y) = \lambda \ph_\lambda(y)$}, and \smash{$\ph_\lambda(y) = \exp(\sqrt{\lambda} \, y)$}. Note also that the solution of~\eqref{eq:ode} linearly independent from $\ph_\lambda$ is given by a similar formula, with $K_{\alpha/2}$ replaced by the modified Bessel function of the first kind $I_{\alpha/2}$.

Let $f \in \leb^2$ and consider the following partial differential equation:
\formula{
 \begin{cases}
 \Delta_x u(x, y) + \alpha^2 c_\alpha^{2 / \alpha} y^{2 - 2/\alpha} \partial_y^2 u(x, y) = 0 \qquad & \text{for $y > 0$,} \\
 u(x, 0) = f(x) ,
 \end{cases}
}
together with the following regularity conditions: $u(x, y)$, as a function of $x \in \R^d$, is in $\leb^2$ for each $y \in [0, \infty)$, with norm bounded uniformly in $y \in [0, \infty)$; and $u(x, y)$ depends continuously on $y \in [0, \infty)$ with respect to the $\leb^2$ norm. Then for each $\xi \in \R^d$ the Fourier transform $\fourier u(\xi, y)$ of $u(x, y)$ with respect to the $x$ variable is bounded in $y \in [0, \infty)$, satisfies~\eqref{eq:ode} with $\lambda = |\xi|^2$, and is equal to $\fourier f(\xi)$ for $y = 0$. It follows that
\formula[eq:fourier:u]{
 \fourier u(\xi, y) & = \ph_{|\xi|^2}(y) \fourier f(\xi) ,
}
and therefore
\formula{
 \partial_y \fourier u(\xi, 0) & = \ph_{|\xi|^2}'(0) \fourier f(\xi) = -|\xi|^\alpha \fourier f(\xi) ,
}
that is, $\partial_y u(x, 0)$ is equal to $L_F f(x)$, the fractional Laplace operator applied to $f$, defined using Fourier transform.

The same method applies not only to $\leb^2$. The fractional Laplace operator $L$ on a Banach space $\X$, defined using harmonic extensions, is the Dirichlet-to-Neumann operator for the weighted Dirichlet problem in the half-space:
\formula[eq:pde]{
 \begin{cases}
 \Delta_x u(x, y) + \alpha^2 c_\alpha^{2 / \alpha} y^{2 - 2/\alpha} \partial_y^2 u(x, y) = 0 \qquad & \text{for $y > 0$,} \\
 u(x, 0) = f(x) , \\
 \partial_y u(x, 0) = L_H f(x) ,
 \end{cases}
}
where
\formula[eq:ca]{
 c_\alpha & = \frac{|\Gamma(-\tfrac{\alpha}{2})|}{2^\alpha \Gamma(\tfrac{\alpha}{2})} \, .
}
The problem~\eqref{eq:pde} requires a regularity condition on $u$, which asserts that the (distributional) Fourier transform of $u$ has the desired form~\eqref{eq:fourier:u}. We state without a proof that if $\X$ is one of the spaces $\leb^p$, $p \in [1, \infty]$, $\conto$ or $\contbu$, it is sufficient to assume that $u(x, y)$, as a function of $x \in \R^d$, is in $\X$ for each $y \in [0, \infty)$, with norm bounded uniformly in $y \in [0, \infty)$, and that $u(x, y)$ depends continuously on $y \in [0, \infty)$ with respect to the norm of $\X$.

It is convenient to rephrase the definition~\eqref{eq:pde} in the following way. The property~\eqref{eq:fourier:u} of the distributional Fourier transform is equivalent to the condition $u(x, y) = f * q_y(x)$, where
\formula[eq:fqy]{
 \fourier q_y(\xi) & = \ph_{|\xi|^2}(y) = \frac{2^{1-\alpha/2}}{\Gamma(\tfrac{\alpha}{2})} \expr{\frac{|\xi|^\alpha y}{c_\alpha}}^{1/2} K_{\alpha/2}\expr{\expr{\frac{|\xi|^\alpha y}{c_\alpha}}^{1/\alpha}} .
}
By~\cite[formula~6.565.4]{bib:gr07}, after simplification we obtain a surprisingly elementary expression $q_y(z) = c_{d,\alpha} y ((y / c_\alpha)^{2/\alpha} + |z|^2)^{-(d + \alpha)/2}$, with $c_{d,\alpha}$ given by~\eqref{eq:cda} (see~\cite[Section~2.4]{bib:cs07} for an alternative derivation). Now the definition can be given in the same way as in~\eqref{eq:S}, using the kernel $q_y$ instead of $p_t$.

For $\alpha = 1$, $q_y(z) = p_y(z)$ is the Poisson kernel for the half-space in $\R^{d+1}$. For general $\alpha \in (0, 2)$, $q_y(z)$ has similar properties to $p_t(z)$: it is strictly positive and infinitely smooth. Furthermore, $q_t(z) = t^{d/\alpha} q_1(t^{1/\alpha} z)$, and a version of~\eqref{eq:heatlim} holds for $q_y$.

\begin{leftbar}
\begin{definition}[harmonic extension definition of $L$]
The fractional Laplace operator is given by
\formula{
\tag{H}\label{eq:H}
 \begin{aligned}
 L_H f(x) & = \lim_{y \to 0^+} \frac{f * q_y(x) - f(x)}{y} \\
 & = \lim_{y \to 0^+} \int_{\R^d} (f(x + z) - f(x)) \, \frac{q_y(z)}{y} \, dz ,
 \end{aligned}
}
where
\formula[eq:qy]{
 q_y(z) & = c_{d,\alpha} \, \frac{y}{((y / c_\alpha)^{2/\alpha} + |z|^2)^{(d + \alpha)/2}} \, ,
}
$c_{d,\alpha}$ is given by~\eqref{eq:cda} and $c_\alpha$ is given by~\eqref{eq:ca}. We write $f \in \dom(L_H, x)$ if the limit exists for a given $x \in \R^d$. If $f \in \X$ and the limit exists in $\X$, then we write $f \in \dom(L_H, \X)$.
\end{definition}
\end{leftbar}

The idea of linking the fractional Laplace operator with a Dirichlet-to-Neumann operator for $\alpha = 1$ dates back at least to the work of Spitzer~\cite{bib:s58}. It was heavily used, for example, in hydrodynamics~\cite{bib:fk83,bib:fl48,bib:h64,bib:kk04}, and, more recently, in probability theory~\cite{bib:bk04,bib:bk06,bib:bbkrsv09,bib:kkms10}. Similar relation for general $\alpha$ appeared first in the article by Molchanov and Ostrovski~\cite{bib:mo69}, and later in~\cite{bib:d90,bib:d04,bib:dm07}. It became well-recognised in analysis as the Caffarelli--Silvestre extension technique after it was rediscovered in~\cite{bib:cs07}, see, for example, \cite{bib:css08}. Fractional powers of more general operators on arbitrary Banach spaces can be studied in a similar way, see~\cite{bib:gms13,bib:st10}. The most general version of the above approach is to consider an arbitrary operator \smash{$\tilde{L}$} in the $x$ variable, and an arbitrary elliptic differential operator $a(y) \partial_y^2$ in the $y$ variable. When \smash{$\tilde{L}$} acts on some Hilbert space, spectral theory for operators on Hilbert spaces and Kre\u{\i}n's spectral theory of strings (see~\cite[Chapter~15]{bib:ssv12} and the references therein) imply that the resulting operator $L$ is equal to $-\psi(-\tilde{L})$ for some \emph{operator monotone function}, or \emph{complete Bernstein function}, $\psi$, and there is a one-to-one correspondence between such $\psi$ and the coefficient $a(y)$ (as long as one agrees for certain singularities of $a(y)$). In many cases this approach extends to other function spaces, such as $\leb^p$ and $\conto$.

Various variants of~\eqref{eq:pde} can be obtained by a change of variable. A divergence form follows by taking $y = c_\alpha z^\alpha$:
\formula[eq:pde1]{
 \begin{cases}
 \nabla_{x,z} \cdot (z^{1 - \alpha} \nabla_{x,z} \, u)(x, z) = 0 \qquad & \text{for $z > 0$,} \\
 u(x, 0) = f(x) , \\
 \lim\limits_{z \to 0^+} \dfrac{u(x, z) - u(x, 0)}{c_\alpha z^\alpha} = L_H f(x) .
 \end{cases}
}
Another interesting variant is obtained by expanding the partial derivatives:
\formula[eq:pde2]{
 \begin{cases}
 \Delta_{x,z} u(x, z) + \tfrac{1 - \alpha}{z} \partial_z u(x, z) = 0 \qquad & \text{for $z > 0$,} \\
 u(x, 0) = f(x) , \\
 \lim\limits_{z \to 0^+} \dfrac{u(x, z) - u(x, 0)}{c_\alpha z^\alpha} = L_H f(x) .
 \end{cases}
}

%
%

\section{Pointwise convergence}
\label{sec:pointwise}

In this section we discuss relationship between pointwise definitions~\eqref{eq:B}, \eqref{eq:I}, \eqref{eq:II}, \eqref{eq:III}, \eqref{eq:D}, \eqref{eq:S} and~\eqref{eq:H} for a given $x$. Some of these results depend on explicit expressions for $L f(x)$ and thus extensions to more general operators are problematic.

Our first two results are standard.

\begin{leftbar}
\begin{lemma}
\label{lem:ppip}
If $f \in \dom(L_\st{I}, x)$, then $f \in \dom(L_I, x)$, and $L_\st{I} f(x) = L_I f(x)$. Similarly, if $f \in \dom(L_\stt{I}, x)$, then $f \in \dom(L_I, x)$, and $L_\stt{I} f(x) = L_I f(x)$.
\end{lemma}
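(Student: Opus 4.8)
The plan is to reduce both assertions to two elementary symmetry identities followed by dominated convergence; I do not expect any genuine obstacle. The relevant facts are that each truncation domain $\R^d \setminus B_r$ is invariant under $z \mapsto -z$, that the kernel $\nu(z) = c_{d,\alpha} |z|^{-d-\alpha}$ is even, and that the corrector $z \mapsto \nabla f(x) \cdot z$ is odd. From the last two, for $r \in (0,1)$ the function $z \mapsto \nabla f(x) \cdot z \, \ind_B(z)\, \nu(z)$ restricted to $\R^d \setminus B_r$ is an odd, absolutely integrable function supported on the bounded annulus $B_1 \setminus B_r$, hence has vanishing integral; and from the first, replacing $z$ by $-z$ in half of the integral turns the increment $f(x+z) - f(x)$ into the symmetrised increment $\tfrac12\bigl(f(x+z) + f(x-z) - 2f(x)\bigr)$.

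For the first statement I would record, using the vanishing just noted, the identity
\[
 c_{d,\alpha} \int_{\R^d \setminus B_r} \bigl(f(x+z) - f(x)\bigr) \frac{dz}{|z|^{d+\alpha}} = \int_{\R^d \setminus B_r} \bigl(f(x+z) - f(x) - \nabla f(x) \cdot z\, \ind_B(z)\bigr) \nu(z)\, dz , \qquad r \in (0,1).
\]
When $f \in \dom(L_\st{I}, x)$ the integrand on the right is dominated, uniformly in $r$, by the fixed $\leb^1(\R^d)$ function $z \mapsto \bigl|f(x+z) - f(x) - \nabla f(x) \cdot z\, \ind_B(z)\bigr|\, \nu(z)$, so each truncated integral is an honest Lebesgue integral and, by dominated convergence, it converges as $r \to 0^+$ to $\int_{\R^d} \bigl(f(x+z) - f(x) - \nabla f(x) \cdot z\, \ind_B(z)\bigr) \nu(z)\, dz = L_\st{I} f(x)$. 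Hence $f \in \dom(L_I, x)$ and $L_I f(x) = L_\st{I} f(x)$.

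For the second statement the argument is parallel: the substitution $z \mapsto -z$ and evenness of $\nu$ give, for $0 < r < R$,
\[
 \int_{B_R \setminus B_r} \bigl(f(x+z) - f(x)\bigr) \nu(z)\, dz = \frac12 \int_{B_R \setminus B_r} \bigl(f(x+z) + f(x-z) - 2f(x)\bigr) \nu(z)\, dz ,
\]
and since $f \in \dom(L_\stt{I}, x)$ makes the symmetrised integrand absolutely integrable over $\R^d$, the right-hand side converges by dominated convergence, first as $R \to \infty$ and then as $r \to 0^+$, with limit $L_\stt{I} f(x)$; this identifies the truncated integrals and their limit, giving $f \in \dom(L_I, x)$ and $L_I f(x) = L_\stt{I} f(x)$. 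The only point that will deserve a line of care — and the closest thing here to a difficulty — is the interpretation of the truncated integral in~\eqref{eq:I}: under the hypothesis on $L_\st{I}$ it is absolutely convergent by the domination above, whereas under the hypothesis on $L_\stt{I}$ one should keep the outer cut-off $B_R$ and invoke the odd-symmetry cancellation on each annulus $B_R \setminus B_r$, reading the truncated integral as the limit $R \to \infty$ when $\bigl(f(x+z) - f(x)\bigr)\nu(z)$ fails to be absolutely integrable on all of $\R^d \setminus B_r$.
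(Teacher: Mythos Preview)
Your proof is correct and follows the same approach as the paper's: the paper simply records the identity
\[
 \int_{\R^d \setminus B_r} \bigl(f(x+z)-f(x)-\nabla f(x)\cdot z\,\ind_B(z)\bigr)\nu(z)\,dz \;=\; \int_{\R^d \setminus B_r} \bigl(f(x+z)-f(x)\bigr)\nu(z)\,dz
\]
and passes to the limit, declaring the second statement ``proved in a similar way.'' Your version is in fact more careful than the paper's on the $L_{\check I}$ case: you correctly observe that $f\in\dom(L_{\check I},x)$ does not by itself force $(f(x+z)-f(x))\nu(z)$ to be absolutely integrable on $\R^d\setminus B_r$, and you repair this by inserting the outer cutoff $B_R$ and reading the truncated integral as $\lim_{R\to\infty}$; the paper's brief proof tacitly assumes this interpretation without comment.
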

\end{leftbar}

\begin{proof}
For the first statement, it suffices to observe that
\formula{
 \int_{\R^d \setminus B_r} (f(x + z) - f(x) - \nabla f(x) \cdot z \ind_B(z)) \nu(z) dz & = \int_{\R^d \setminus B_r} (f(x + z) - f(x)) \nu(z) dz
}
and take a limit as $r \to 0^+$. The other one is proved in a similar way.
\end{proof}

Note that any bounded $f$ such that $f(x + z) = -f(x - z)$ belongs to $\dom(L_I, x)$, but not every such function has a gradient at $x$, so $\dom(L_\st{I}, x)$ is a proper subset of $\dom(L_I, x)$. In a similar way, it is easy to construct a bounded function $f$ such that the integral of $\ph(z) = (f(x + z) + f(x - z) - 2 f(x)) / |z|^{d + \alpha}$ over $\R^d \setminus B_r$ has a finite limit as $r \to 0^+$, but a similar limit for the integral of $|\ph(z)|$ is infinite. This shows that inclusions in the above lemma are proper.

\begin{leftbar}
\begin{lemma}
\label{lem:smooth}
If $f$ has second order partial derivatives at $x$ and $(1 + |z|)^{-d - \alpha} f(z)$ is integrable, then $f$ is in $\dom(L_D, x)$, $\dom(L_I, x)$, $\dom(L_\st{I}, x)$, $\dom(L_\stt{I}, x)$, $\dom(L_S, x)$, $\dom(L_H, x)$ and $\dom(L_B, x)$, and all definitions of $L f(x)$ agree. If $f$ is of class $\cont^2$ in $B(x, r)$, then the rates of convergence in each of the definitions~\eqref{eq:D}, \eqref{eq:I}, \eqref{eq:S} and \eqref{eq:H} depend only on $r$, $\sup \{\max(|f(y)|, |\nabla f(y)|, |\nabla^2 f(y)|) : y \in B(x, r)\}$ and $\int_{\R^d} (1 + |z|)^{-d - \alpha} |f(x + z)| dz$.
\end{lemma}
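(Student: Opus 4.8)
The plan is to reduce everything to a single model estimate: if $f$ is $\cont^2$ on $B(x,r)$ and $g(z) = (1+|z|)^{-d-\alpha} f(x+z)$ is integrable, then for each of the kernels $\nu_\rho$, $\tilde\nu_\rho$, $p_t/t$, $q_y/y$ we control the difference between the corresponding approximation to $Lf(x)$ and a common limit. First I would split each integral $\int_{\R^d}(f(x+z)-f(x))\,\mu(z)\,dz$ (with $\mu$ one of these kernels, suitably regularised near $0$) into the near part $B(x,r)$ and the far part $\R^d\setminus B(x,r)$. On the far part, since $|f(x+z)-f(x)| \le |f(x+z)| + |f(x)|$ and each kernel is bounded by $C|z|^{-d-\alpha}$ there (using \eqref{eq:ptest} and \eqref{eq:heatlim} for $p_t$, the analogous bound for $q_y$, and the explicit forms of $\nu_\rho,\tilde\nu_\rho$), the far integral converges absolutely and is bounded by $C\,(\int_{\R^d}(1+|z|)^{-d-\alpha}|f(x+z)|\,dz + |f(x)|)$; moreover as the regularisation parameter goes to $0$ the far integrals all converge to $\int_{\R^d\setminus B_r}(f(x+z)-f(x))\nu(z)\,dz$ plus an error governed only by $r$ and the listed quantities, by dominated convergence using $|z|^{-d-\alpha}$ as the dominating function.

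The near part is where the $\cont^2$ hypothesis enters. On $B(x,r)$ write $f(x+z) - f(x) = \nabla f(x)\cdot z + R(z)$ with $|R(z)| \le \tfrac12 \sup_{B(x,r)}|\nabla^2 f|\,|z|^2$ (Taylor with remainder). For the symmetric kernels $\nu_\rho$, $\tilde\nu_\rho$, $p_t/t$, $q_y/y$ the odd term $\nabla f(x)\cdot z$ integrates to $0$ over any ball, so the near integral is bounded by $C\sup_{B(x,r)}|\nabla^2 f|\int_{B_r}|z|^2\,\mu(z)\,dz$; since $\int_{B_r}|z|^{2-d-\alpha}\,dz = C(d,\alpha)r^{2-\alpha}$ is finite ($\alpha<2$) and the measures $t^{-1}p_t(z)\,dz$, $y^{-1}q_y(z)\,dz$, $\nu_\rho\,dz$, $\tilde\nu_\rho\,dz$ all have $\int_{B_r}|z|^2\,d\mu \le C(d,\alpha)r^{2-\alpha}$ uniformly in the parameter (again from the scaling \eqref{eq:ptscaling}, \eqref{eq:ptbochner} and the kernel estimates), the near contributions are uniformly $O(r^{2-\alpha})$ with constant depending only on $\sup_{B(x,r)}|\nabla^2 f|$. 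Letting $\rho,t,y\to 0$ the near integrals converge to $\tfrac12\int_{B_r}(f(x+z)+f(x-z)-2f(x))\nu(z)\,dz$ (use the symmetrised form and dominated convergence with dominating function $C\sup|\nabla^2 f|\,|z|^{2-d-\alpha}$), so all four limits exist and equal
\formula{
 L f(x) = \tfrac12\int_{B_r}(f(x+z)+f(x-z)-2f(x))\nu(z)\,dz + \int_{\R^d\setminus B_r}(f(x+z)-f(x))\nu(z)\,dz ,
}
which is $L_\stt{I}f(x) = L_I f(x)$ by Lemma~\ref{lem:ppip}; and $L_B f(x)$ is handled by the substitution $t=|z|^2/(4s)$ in \eqref{eq:B} exactly as in \eqref{eq:nucalc}, once one justifies Fubini using the same absolute bound. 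For $L_\st{I}$ one just notes $\nabla f(x)$ exists and the integrand is absolutely integrable by the near/far split. The quantitative statement about rates of convergence then falls out: combining the near bound $O(r^{2-\alpha})$ (depending on $\sup_{B(x,r)}\max(|f|,|\nabla f|,|\nabla^2 f|)$) with the far-integral error — which for, say, $p_t$ is $\int_{\R^d\setminus B_r}|f(x+z)-f(x)|\,|t^{-1}p_t(z) - \nu(z)|\,dz$ and tends to $0$ at a rate depending only on $r$ and $\int(1+|z|)^{-d-\alpha}|f(x+z)|\,dz$ via \eqref{eq:heatlim} and dominated convergence — gives the claim for \eqref{eq:D}, \eqref{eq:I}, \eqref{eq:S}, \eqref{eq:H}.

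The main obstacle I expect is the uniformity (in the regularisation parameter) of the bound $\int_{B_r}|z|^2\,d\mu_\rho \le C(d,\alpha)r^{2-\alpha}$ and of the far-field domination for the semigroup and harmonic-extension kernels: unlike $\nu_\rho$ and $\tilde\nu_\rho$, the kernels $t^{-1}p_t$ and $y^{-1}q_y$ are not given by an elementary closed form in the case of $p_t$, so one must invoke \eqref{eq:ptest} together with the scaling \eqref{eq:ptscaling} — writing $t^{-1}p_t(z) = t^{-1-d/\alpha}p_1(t^{-1/\alpha}z)$ and splitting according to whether $|t^{-1/\alpha}z|$ is large or small — to get $t^{-1}p_t(z) \le C(d,\alpha)\min(t^{-1-d/\alpha}, |z|^{-d-\alpha})$, whence both $\int_{B_r}|z|^2 t^{-1}p_t(z)\,dz \le C r^{2-\alpha}$ and $t^{-1}p_t(z) \le C|z|^{-d-\alpha}$ on $\R^d\setminus B_r$ follow uniformly in $t\in(0,1]$. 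The kernel $q_y$ is elementary by \eqref{eq:qy} and poses no difficulty. Everything else is a routine application of dominated convergence with the single dominating function $C(d,\alpha)\big((1+|z|)^{-d-\alpha}|f(x+z)| + \sup_{B(x,r)}|\nabla^2 f|\,|z|^{2-d-\alpha}\ind_{B_r}(z) + |z|^{-d-\alpha}\ind_{\R^d\setminus B_r}(z)\big)$.
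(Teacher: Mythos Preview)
Your proposal is correct and follows the same approach as the paper, which gives only a one-line sketch invoking Taylor's expansion $f(x+z)=f(x)+\nabla f(x)\cdot z+O(|z|^2)$, symmetry of the kernels, and the kernel upper bounds, then ``omits the details.'' You have essentially written out those omitted details: the near/far split, the cancellation of the linear term by radial symmetry, the uniform second-moment bound $\int_{B_r}|z|^2\,\mu(z)\,dz\le C r^{2-\alpha}$ for each of the approximating kernels, and dominated convergence on the far part.

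One small point: the first sentence of the lemma assumes only that $f$ has second-order partial derivatives at the single point $x$, not that $f\in\cont^2$ on $B(x,r)$. Your write-up works under the stronger $\cont^2$ hypothesis throughout (via $|R(z)|\le\tfrac12\sup_{B(x,r)}|\nabla^2 f|\,|z|^2$). For the existence statement you should note that the pointwise hypothesis gives $R(z)=o(|z|^2)$ as $z\to 0$, which suffices for the dominated-convergence arguments on the near part (pick $\delta>0$ so that $|R(z)|\le\eps|z|^2$ on $B_\delta$); the quantitative rate in the second sentence then requires the $\cont^2$ hypothesis exactly as you have it.
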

\end{leftbar}

\begin{proof}
The result follows easily from Taylor's expansion of $f$ at $x$, namely
\formula{
 f(x + z) = f(x) + z \cdot \nabla f(x) + O(|z|^2) ,
}
and the symmetry and upper bounds for the appropriate convolution kernels. We omit the details.
\end{proof}

The following result seems to be new.

\begin{leftbar}
\begin{lemma}
\label{lem:dip}
If $f \in \dom(L_D, x)$, then $f \in \dom(L_I, x)$, and $L_D f(x) = L_I f(x)$.
\end{lemma}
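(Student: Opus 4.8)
The plan is to show that the Dynkin kernel $\tilde\nu_r$ can be written as an average of the truncated Riesz kernels $\nu_\rho$ over $\rho \in (0, r)$ against an explicit, integrable weight, and then to pass from convergence of the Dynkin integrals to convergence of the singular integrals by a standard ``averaged limit implies limit'' argument in the opposite direction. Concretely, one first establishes an identity of the form
\formula{
 \tilde\nu_r(z) & = \int_0^r \nu_\rho(z) \, \mu_r(d\rho) ,
}
or something close to it, where $\mu_r$ is a finite measure on $(0, r)$ determined by comparing $(|z|^2 - r^2)^{-\alpha/2} \ind_{\{|z| > r\}}$ with $|z|^{-\alpha} \ind_{\{|z| > \rho\}}$ for $\rho < r$. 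Since $\nu_\rho(z) = c_{d,\alpha} |z|^{-d-\alpha} \ind_{\{|z| > \rho\}}$, writing $\ind_{\{|z| > \rho\}}$ as an integral in $\rho$ and interchanging, the required weight should come out explicitly in terms of a Beta-type factor; the normalisation $\mu_r((0,r)) = 1$ should follow from the fact that both $\tilde\nu_r$ and each $\nu_\rho$ carry the same constant $c_{d,\alpha}$ and that, after the change of variables, the $z$-integrals match.

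Granting such a representation, the argument runs as follows. Write $\Lambda_\rho f(x) = \int_{\R^d}(f(x+z)-f(x))\nu_\rho(z)\,dz$ and $\tilde\Lambda_r f(x) = \int_{\R^d}(f(x+z)-f(x))\tilde\nu_r(z)\,dz$, so that $L_I f(x) = \lim_{\rho\to 0^+}\Lambda_\rho f(x)$ and $L_D f(x) = \lim_{r\to 0^+}\tilde\Lambda_r f(x)$ when these exist. The identity above yields $\tilde\Lambda_r f(x) = \int_0^r \Lambda_\rho f(x)\,\mu_r(d\rho)$ by Fubini (which is legitimate because, for $|z|$ bounded away from $0$, everything is absolutely integrable, and near $z = 0$ the integrand $f(x+z)-f(x)$ is what makes $\Lambda_\rho f(x)$ finite in the first place — one should check that the double integral is absolutely convergent under the hypothesis $f \in \dom(L_D, x)$, perhaps after first restricting to $|z| > \eps$ and letting $\eps \to 0$). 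Thus $\tilde\Lambda_r f(x)$ is a $\mu_r$-average of the values $\Lambda_\rho f(x)$ with $\rho < r$. The point is now to invert this: one wants to deduce that $\Lambda_\rho f(x)$ itself converges as $\rho \to 0^+$. This is the step that needs care, and it is where one exploits the specific structure of the problem. The natural route is to observe that $\rho \mapsto \Lambda_\rho f(x)$ is continuous on $(0,\infty)$, and that the weights $\mu_r$ concentrate near $0$ as $r \to 0^+$ in a controlled way; but concentration of the averaging measures alone does not give convergence of $\Lambda_\rho f(x)$ — it only shows that \emph{if} $\Lambda_\rho f(x)$ converges then so does $\tilde\Lambda_r f(x)$, the wrong direction.

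The main obstacle, and the real content of the lemma, is therefore precisely this inversion. I expect the resolution to use that $\Lambda_\rho f(x) - \Lambda_{\rho'} f(x) = -c_{d,\alpha}(f(x))\!\cdot\!0 + \int_{\rho < |z| \le \rho'}(f(x+z)-f(x))|z|^{-d-\alpha}\,dz$ for $\rho < \rho'$, i.e.\ the \emph{differences} of the $\Lambda_\rho f(x)$ are integrals of $f(x+z)-f(x)$ over thin annuli; and similarly the Dynkin differences $\tilde\Lambda_r f(x) - \tilde\Lambda_{r'} f(x)$ control weighted annular averages of the \emph{same} quantity $f(x+z)-f(x)$. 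So from $f \in \dom(L_D,x)$, the weighted annular averages of $f(x+z)-f(x)$ near $0$ tend to $0$, and one must bootstrap this to say the unweighted annular integrals $\int_{\rho<|z|\le \rho'}(f(x+z)-f(x))|z|^{-d-\alpha}dz \to 0$ as $\rho,\rho' \to 0^+$, which is the Cauchy criterion for $\lim_{\rho\to0^+}\Lambda_\rho f(x)$ to exist. The key inequality should be that the Dynkin weight $\tilde\nu_r(z)$ dominates a constant multiple of $\nu_{r}(z) - \nu_{2r}(z)$ (say) on the annulus $r < |z| < 2r$, up to the correct power of $r$, so that smallness of the Dynkin expression forces smallness of a genuine dyadic-annulus piece; summing over dyadic scales then gives the Cauchy property for $\Lambda_\rho f(x)$. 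Once $\lim_{\rho\to0^+}\Lambda_\rho f(x)$ is known to exist, the averaging identity forces the limit to equal $\lim_{r\to0^+}\tilde\Lambda_r f(x) = L_D f(x)$, since $\mu_r$ is a probability measure supported on $(0,r)$. I would present the annular/dyadic estimate as the single lemma-internal computation and keep the averaging identity as a clean preliminary.
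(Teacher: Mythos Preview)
Your averaging identity is written in the wrong direction, and in the form stated it cannot hold: for $|z|>r$ the right-hand side $\int_0^r \nu_\rho(z)\,\mu_r(d\rho)$ equals $c_{d,\alpha}|z|^{-d-\alpha}\mu_r((0,r))$, a fixed multiple of $|z|^{-d-\alpha}$, whereas the left-hand side is $c_{d,\alpha}|z|^{-d}(|z|^2-r^2)^{-\alpha/2}$, which has a genuinely different dependence on $|z|$. You correctly notice that even if such an identity held it would only give the implication $\dom(L_I,x)\subseteq\dom(L_D,x)$, the wrong one. The dyadic workaround you sketch does not rescue the argument: the Dynkin expression $\tilde\Lambda_r f(x)$ is an integral over all of $\{|z|>r\}$ with no sign control on $f(x+z)-f(x)$, so the pointwise domination $\tilde\nu_r(z)\ge c(\nu_r(z)-\nu_{2r}(z))$ on the annulus $r<|z|<2r$ does not let you isolate the annular contribution. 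Cancellation between the annulus and its complement could keep each $\tilde\Lambda_r f(x)$ close to its limit while the individual annular pieces remain large; nothing in your outline rules this out, and there is no obvious way to complete the Cauchy argument along these lines.

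The paper's proof reverses the averaging. One writes the \emph{singular-integral} kernel as an integral of the \emph{Dynkin} kernels over $r\in(R,\infty)$:
\[
\nu_R(z)=\frac{4R^{2-\alpha}}{\alpha\,\Gamma(\tfrac{\alpha}{2})\,|\Gamma(-\tfrac{\alpha}{2})|}\int_R^\infty \tilde\nu_r(z)\,\frac{dr}{r(r^2-R^2)^{1-\alpha/2}},
\]
which is a direct Beta-type computation. Setting $\varphi(r)=\int(f(x+z)-f(x))\tilde\nu_r(z)\,dz$, Fubini then gives the singular integral $\int(f(x+z)-f(x))\nu_R(z)\,dz$ as a weighted average of $\varphi(r)$ over $r>R$. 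Since $\varphi(r)\to L_D f(x)$ as $r\to0^+$ by hypothesis (hence $\varphi$ is bounded on some $(0,r_0)$), the substitution $r=Rs$ and dominated convergence handle the contribution of $r\in(R,r_0)$, while monotone decay of the weight as $R\to0^+$ kills the contribution of $r\in(r_0,\infty)$. The single idea you are missing is that the averaging goes the other way.
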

\end{leftbar}

\begin{proof}
Let $0 < s < t$. Substituting $r^2 = s^2 + v (t^2 - s^2)$ and using~\cite[formula~3.197.4]{bib:gr07}, we obtain
\formula{
 & \int_s^t \frac{1}{t^d (t^2 - r^2)^{\alpha/2}} \, \frac{1}{r (r^2 - s^2)^{1 - \alpha/2}} \, dr \\
 & \qquad = \frac{1}{2 s^2 t^d} \int_0^1 \frac{v^{\alpha/2 - 1} (1 - v)^{-\alpha/2}}{(1 + v (t^2/s^2 - 1))} \, dv \\
 & \qquad = \frac{\Gamma(\tfrac{\alpha}{2}) \Gamma(1 - \tfrac{\alpha}{2})}{2 s^2 t^d (t^2/s^2)^{\alpha/2}} = \frac{\alpha \, \Gamma(\tfrac{\alpha}{2}) |\Gamma(-\tfrac{\alpha}{2})|}{4 s^{2 - \alpha} t^{d + \alpha}} \, .
}
Taking $s = R$, $t = |z|$ and multiplying both sides by $c_{d,\alpha}$ leads to
\formula[eq:numu]{
 \nu_R(z) & = \frac{4 R^{2 - \alpha}}{\alpha \, \Gamma(\tfrac{\alpha}{2}) |\Gamma(-\tfrac{\alpha}{2})|} \int_R^\infty \tilde{\nu}_r(z) \, \frac{1}{r (r^2 - R^2)^{1 - \alpha/2}} \, dr .
}
Note that both sides of the above equality are zero in $\overline{B}_R$, and so it extends to all $R > 0$ and all $z$.

The remaining part of the proof is standard. Let $f \in \dom(L_D, x)$ and
\formula{
 \ph(r) & = \int_{\R^d} (f(x + z) - f(x)) \tilde{\nu}_r(z) dz
}
for $r > 0$. Then $\ph$ converges to a limit $\ph(0^+) = L_D f(x)$ as $r \to 0^+$. In particular, $\ph$ is bounded on some interval $(0, r_0)$. Since $\nu_r(z) \le \tilde{\nu}_r(z)$, the definition~\eqref{eq:D} requires that $(f(x + z) - f(x)) \nu_r(z)$ is absolutely integrable for all $r > 0$. Hence, by~\eqref{eq:numu},
\formula[eq:dipaux]{
\begin{aligned}
 \int_{\R^d} (f(x + z) - f(x)) \nu_R(z) dz & = \frac{4 R^{2 - \alpha}}{\alpha \, \Gamma(\tfrac{\alpha}{2}) |\Gamma(-\tfrac{\alpha}{2})|} \int_R^\infty \ph(r) \, \frac{1}{r (r^2 - R^2)^{1 - \alpha/2}} \, dr \\
 & = \frac{4}{\alpha \, \Gamma(\tfrac{\alpha}{2}) |\Gamma(-\tfrac{\alpha}{2})|} \int_1^{r_0 / R} \ph(R s) \, \frac{1}{s (s^2 - 1)^{1 - \alpha/2}} \, ds \\
 & \qquad + \frac{4}{\alpha \, \Gamma(\tfrac{\alpha}{2}) |\Gamma(-\tfrac{\alpha}{2})|} \int_{r_0}^\infty \ph(r) \, \frac{1}{r ((\tfrac{r}{R})^2 - 1)^{1 - \alpha/2}} \, dr,
\end{aligned}
}
and all integrals above are absolutely convergent. We consider the two integrals in the right-hand side of~\eqref{eq:dipaux} separately. By dominated convergence, as $R \to 0^+$, the former one converges to (see~\cite[formula~3.191.2]{bib:gr07})
\formula{
 -\frac{4 \ph(0^+)}{\alpha \, \Gamma(\tfrac{\alpha}{2}) \Gamma(-\tfrac{\alpha}{2})} \int_1^\infty \frac{1}{s (s^2 - 1)^{1 - \alpha/2}} \, ds = \ph(0^+) .
}
In the latter integral, the absolute value of the integrand decreases to $0$ as $R \to 0^+$, and the integral is absolutely integrable for all $R \in (0, r_0)$. Again by dominated convergence, the latter integral in the right-hand side of~\eqref{eq:dipaux} converges to $0$ as $R \to 0^+$.
\end{proof}


It is easy to construct $f \in \dom(L_I, x)$ which is not in $\dom(L_D, x)$. 

\begin{example}
Let
\formula{
 f(x + z) & = \sum_{n = 1}^\infty \frac{\eps_n}{(|z|^2 - r_n^2)^{1 - \alpha/2}} \, \ind_{[r_n, 2 r_n]}(|z|) ,
}
where, for example, $r_n = 2^{-n}$ and $\eps_n = 5^{-n}$. Then
\formula{
 \int_{\R^d} f(x + z) \nu(z) dz & = \sum_{n = 1}^\infty \frac{c_{d,\alpha} \eps_n}{r_n^2} \int_{B_2 \setminus B_1} \frac{1}{|y|^{d + \alpha} (|y|^2 - 1)^{1 - \alpha/2}} \, dy < \infty ,
}
so $f \in \dom(L_I, x)$, but
\formula{
 \int_{\R^d} f(x + z) \tilde{\nu}_{r_n}(z) dz & = \infty ,
}
so that $f \notin \dom(L_D, x)$.
\end{example}

The next result is likely well-known, although the author could not find it in the literature. Clearly, if $f(x + z) = -f(x - z)$ for $z \in \R^d$ and $f(x + z) |z|^{d + \alpha}$ is integrable in $\R^d \setminus B(x, r)$ for every $r > 0$, then $f \in \dom(L_I, x)$ and $L_I f(x) = 0$. Nevertheless, $f$ may fail to be locally integrable near $x$, and so $f \notin \dom(L_S, x)$. For locally integrable $f$, however, the pointwise definition of $L_S$ is indeed an extension of the pointwise definition of $L_I$.

\begin{leftbar}
\begin{lemma}
\label{lem:pis}
If $f \in \dom(L_I, x)$ and $f$ is locally integrable near $x$, then $f \in \dom(L_S, x)$, and $L_I f(x) = L_S f(x)$.
\end{lemma}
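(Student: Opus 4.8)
The plan is to show directly that the limit defining $L_S f(x)$ exists and equals $L := L_I f(x)$. Put $h(z) = f(x+z) - f(x)$ and $g_t(z) = p_t(z)/t$, so that we must prove $\int_{\R^d} h(z) g_t(z)\,dz \to L$ as $t \to 0^+$. First I would note that these integrals are absolutely convergent and that $h \in \leb^1_{\mathrm{loc}}(\R^d)$: local integrability of $f$ near $x$ handles a small ball $B_\rho$, on which $g_t$ is bounded, while $f \in \dom(L_I, x)$ gives $\int_{\R^d \setminus B_\rho} |h(z)|\,|z|^{-d-\alpha}\,dz < \infty$, and scaling~\eqref{eq:ptscaling} together with~\eqref{eq:ptest} yields $g_t(z) \le C |z|^{-d-\alpha}$ for $|z| \ge \rho$ and $t \le \rho^\alpha$. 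Set $\phi(r) = c_{d,\alpha} \int_{\R^d \setminus B_r} h(z)|z|^{-d-\alpha}\,dz$; then $\phi$ is locally absolutely continuous and bounded on $(0, \infty)$, with $\phi(r) \to L$ as $r \to 0^+$ (this is the hypothesis $f \in \dom(L_I, x)$) and $\phi(r) \to 0$ as $r \to \infty$.

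The main device is an integration by parts in the radial variable, in the spirit of the proof of Lemma~\ref{lem:dip}. Since $g_t$ is radial, polar coordinates give $\int_{\R^d} h g_t\,dz = \int_0^\infty g_t(s)\,m(s)\,ds$, where $m(s) = \int_{\partial B_s} h\,dS$ is the surface integral of $h$ over the sphere of radius $s$ (defined for a.e.\ $s$ since $h \in \leb^1_{\mathrm{loc}}$) and $g_t(s)$ is the common value of $g_t$ on that sphere; moreover $\phi(s) = c_{d,\alpha}\int_s^\infty u^{-d-\alpha} m(u)\,du$, so $m(s) = -c_{d,\alpha}^{-1} s^{d+\alpha} \phi'(s)$ and
\formula{
 \int_{\R^d} h(z) g_t(z)\,dz & = -\frac{1}{c_{d,\alpha}} \int_0^\infty s^{d+\alpha} g_t(s)\,\phi'(s)\,ds .
}
By~\eqref{eq:ptscaling} one has $s^{d+\alpha} g_t(s) = G(t^{-1/\alpha} s)$, where $G(\sigma) := \sigma^{d+\alpha} p_1(\sigma)$; by~\eqref{eq:heatlim} together with smoothness and boundedness of $p_1$, the function $G$ is continuous and bounded on $[0, \infty)$ with $G(0^+) = 0$ and $G(\infty) = c_{d,\alpha}$. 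Integrating by parts on $[\varepsilon, R]$ and letting $\varepsilon \to 0^+$, $R \to \infty$ (the boundary terms vanish because $G(0^+) = 0$, $\phi(R) \to 0$, and $G, \phi$ are bounded) gives
\formula{
 \int_{\R^d} h(z) g_t(z)\,dz & = \int_0^\infty \psi_t(s)\,\phi(s)\,ds, \qquad \psi_t(s) := \frac{1}{c_{d,\alpha}}\, \frac{d}{ds}\bigl(s^{d+\alpha} g_t(s)\bigr) = \frac{t^{-1/\alpha}}{c_{d,\alpha}}\, G'(t^{-1/\alpha} s).
}
Thus $\psi_t$ is the dilation by $t^{1/\alpha}$ of the fixed profile $\kappa := c_{d,\alpha}^{-1} G'$, and $\int_0^\infty \psi_t = \int_0^\infty \kappa = c_{d,\alpha}^{-1}\bigl(G(\infty) - G(0^+)\bigr) = 1$.

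The crux of the argument, and the step I expect to be the main obstacle, is to show $\kappa \in \leb^1(0, \infty)$, equivalently that $G(\sigma) = \sigma^{d+\alpha} p_1(\sigma)$ has bounded variation on $(0, \infty)$; this is precisely the ``more detailed property of $p_t$'' referred to in the remark preceding~\eqref{eq:S}. Near $0$, $G$ is smooth with $G'(\sigma) \sim (d+\alpha) p_1(0)\,\sigma^{d+\alpha-1}$ (integrable), and on compact subsets of $(0, \infty)$, $G$ is smooth, so the real point is the decay of $G'$ near infinity: one must sharpen~\eqref{eq:heatlim} to an asymptotic expansion of the stable density (classically $p_1(z) = c_{d,\alpha}|z|^{-d-\alpha} + O(|z|^{-d-2\alpha})$, with a matching estimate for $p_1'$), whence $G(\sigma) = c_{d,\alpha} + O(\sigma^{-\alpha})$ and $G'(\sigma) = O(\sigma^{-\alpha-1})$. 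In fact $G$ can be shown to be nondecreasing — transparently so for $d = \alpha = 1$, where $p_1(z) = \frac{1}{\pi(1+z^2)}$ and $G(\sigma) = \frac{\sigma^2}{\pi(1+\sigma^2)}$ — using the Bochner subordination representation~\eqref{eq:ptbochner} of $p_1$ as a superposition of Gaussians against a heavy‑tailed mixing density; in that case $\psi_t \ge 0$ and $\{\psi_t\}_{t>0}$ is a genuine approximate identity.

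Granting $\kappa \in \leb^1(0, \infty)$, the proof concludes with the standard approximate‑identity estimate: using $\int_0^\infty \psi_t = 1$, splitting $(0,\infty) = (0,\varepsilon] \cup (\varepsilon, \infty)$, and exploiting the dilation structure of $\psi_t$, for every $\varepsilon > 0$
\formula{
 \Bigl| \int_0^\infty \psi_t(s)\,\phi(s)\,ds - L \Bigr| & \le \|\kappa\|_1 \sup_{0 < s \le \varepsilon} |\phi(s) - L| + \bigl(\|\phi\|_\infty + |L|\bigr) \int_{\varepsilon t^{-1/\alpha}}^\infty |\kappa(\sigma)|\,d\sigma .
}
The second term tends to $0$ as $t \to 0^+$ because $\kappa \in \leb^1$, and then letting $\varepsilon \to 0^+$ eliminates the first term since $\phi(s) \to L$ as $s \to 0^+$. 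Hence $\int_{\R^d} h(z) g_t(z)\,dz \to L$ as $t \to 0^+$, that is, $f \in \dom(L_S, x)$ with $L_S f(x) = L_I f(x)$.
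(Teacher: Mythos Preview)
Your approach is essentially the paper's: your $G$ is $c_{d,\alpha} m$ in the paper's notation (where $m$ is the profile of $c_{d,\alpha}^{-1}|z|^{d+\alpha}p_1(z)$), your $\kappa$ is the paper's $m'$, and after the change of variable $r = t^{-1/\alpha} s$ your identity $\int h\,g_t = \int_0^\infty \psi_t(s)\phi(s)\,ds$ becomes exactly the paper's $\int_0^\infty \phi(t^{1/\alpha} r)\,m'(r)\,dr$, with the conclusion by dominated convergence once $m' \in \leb^1$.

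One claim is wrong, however: $G$ is \emph{not} nondecreasing when $\alpha \in (1,2)$. The paper computes the Fourier transform of $g(z) := c_{d,\alpha}|z|^{1-d-\alpha} m'(|z|)$, namely $\fourier g(\xi) = \alpha e^{-|\xi|^\alpha}(1+|\xi|^\alpha) = \alpha(1 - \tfrac{1}{2}|\xi|^{2\alpha}) + O(|\xi|^{3\alpha})$, and a Tauberian theorem then gives $c_{d,\alpha}\, r^{1+\alpha} m'(r) \to 2^{2\alpha - d/2 - 1}\Gamma(\tfrac{d}{2}+\alpha)/\Gamma(-\alpha)$ as $r \to \infty$; this limit is \emph{negative} for $\alpha \in (1,2)$, so $G$ overshoots $c_{d,\alpha}$ and comes back down. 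Your Bochner-subordination heuristic fails because $\sigma \mapsto \sigma^{d+\alpha} k_s(\sigma)$ is not monotone either: it peaks at $\sigma^2 = 2s(d+\alpha)$, so the mixture against $\eta(s)\,ds$ tells you nothing about monotonicity without further information on $\eta$. Your other route to $\kappa \in \leb^1$---the classical two-term expansion of the stable density and its derivative yielding $G'(\sigma) = O(\sigma^{-1-\alpha})$---is correct and is effectively what the paper establishes via the Tauberian argument; once you drop the monotonicity claim and keep that estimate, the proof goes through.
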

\end{leftbar}

\begin{proof}
Let $m(r)$ be the profile function of $c_{d,\alpha}^{-1} |z|^{d + \alpha} p_1(z)$. Later in this proof we show that $|m'(r)|$ is integrable on $(0, \infty)$. Once this is done, the argument is very similar to the one used in the proof of Lemma~\ref{lem:dip}. Observe that $m(0) = 0$ and $m(r) \to 1$ as $r \to \infty$, so that the integral of $m'(r)$ is equal to $1$. Since
\formula{
 c_{d,\alpha}^{-1} |z|^{d + \alpha} p_t(z) & = c_{d,\alpha}^{-1} t^{-d/\alpha} |z|^{d + \alpha} p_1(t^{-1/\alpha} z) = t m(t^{-1/\alpha} |z|) \\
 & = t \int_0^{t^{-1/\alpha} |z|} m'(r) dr = t \int_0^\infty \ind_{\R^d \setminus B_r}(t^{-1/\alpha} z) m'(r) dr ,
}
we have, by Fubini,
\formula{
 p_t(z) & = t \int_0^\infty c_{d,\alpha} |z|^{-d - \alpha} \ind_{\R^d \setminus B_r}(t^{-1/\alpha} z) m'(r) dr \\
 & = t \int_0^\infty \nu_{t^{1/\alpha} r}(z) m'(r) dr .
}
Let $f \in \dom(L_I, x)$ and
\formula{
 \ph(r) & = \int_{\R^d} (f(x + z) - f(x)) \nu_r(z) dz
}
for $r > 0$. Then $\ph$ is bounded and it converges to a limit $\ph(0^+) = L_I f(x)$ as $r \to 0^+$. Since
\formula{
 \int_{\R^d} (f(x + z) - f(x)) \, \frac{p_t(z)}{t} \, dz & = \int_0^\infty \ph(t^{1/\alpha} r) m'(r) dr ,
}
the desired result follows by dominated convergence.

It remains to prove integrability of $|m'(r)|$. Observe that
\formula{
 m'(|z|) & = c_{d,\alpha}^{-1} |z|^{d + \alpha - 1} ((d + \alpha) p_1(z) + z \cdot \nabla p_1(z)) ,
}
so that the Fourier transform of $g(z) = c_{d,\alpha} |z|^{1 - d - \alpha} m'(|z|)$ is equal to
\formula{
 \fourier g(\xi) & = (d + \alpha) e^{-|\xi|^\alpha} - \nabla \cdot (\xi e^{-|\xi|^\alpha}) \\
 & = (d + \alpha) e^{-|\xi|^\alpha} - (d - \alpha |\xi|^\alpha) e^{-|\xi|^\alpha} \\
 & = \alpha e^{-|\xi|^\alpha} (1 + |\xi|^\alpha) .
}
The right-hand side is smooth in $\R^d \setminus \{0\}$, and its series expansion at $0$ is $\fourier g(\xi) = \alpha (1 - \tfrac{1}{2} |\xi|^{2 \alpha}) + O(|\xi|^{3 \alpha})$. By~\cite[Theorem~4]{bib:l83},
\formula{
 \lim_{r \to \infty} c_{d,\alpha} r^{1 + \alpha} m'(r) & = \lim_{|z| \to \infty} |z|^{d + 2 \alpha} g(z) = \frac{2^{2 \alpha - d/2 - 1} \Gamma(\tfrac{d}{2} + \alpha)}{\Gamma(-\alpha)} \, ,
}
where the right-hand side is understood to be equal to $0$ if $\alpha = 1$. It follows that for $\alpha \in (0, 1)$, $m'(r)$ is ultimately positive, while for $\alpha \in (1, 2)$, $m'(r)$ is ultimately negative. By~\eqref{eq:cauchy}, for $\alpha = 1$, $m'(r)$ is everywhere positive (ultimate positivity also follows from~\cite[Theorem~4]{bib:l83} by considering the next term in the series expansion of $\fourier g(\xi)$). Since $m'(r)$ is smooth on $[0, \infty)$ and
\formula{
 \lim_{R \to \infty} \int_0^R m'(r) dr = \lim_{R \to \infty} m(R) - m(0) = \lim_{|z| \to \infty} c_{d,\alpha}^{-1} |z|^{d + \alpha} p_1(z) = 1 ,
}
we conclude that $|m'(r)|$ is integrable, as desired.
\end{proof}

In a similar way, we obtain the following result.

\begin{leftbar}
\begin{lemma}
\label{lem:pih}
If $f \in \dom(L_I, x)$ and $f$ is locally integrable near $x$, then $f \in \dom(L_H, x)$, and $L_I f(x) = L_H f(x)$.
\end{lemma}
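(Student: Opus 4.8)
The plan is to mimic the proof of Lemma~\ref{lem:pis} verbatim, replacing the heat kernel $p_t$ by the harmonic extension kernel $q_y$. The key structural ingredient is a representation of $q_y$ as an average of the truncated kernels $\nu_r$ against a fixed profile measure. Concretely, let $n(r)$ be the profile function of $c_{d,\alpha}^{-1} |z|^{d+\alpha} q_1(z)$; from the explicit formula~\eqref{eq:qy} one computes $n(r) = r^{d+\alpha}(c_\alpha^{-2/\alpha} + r^2)^{-(d+\alpha)/2}$, which is smooth and increasing on $[0,\infty)$ with $n(0)=0$ and $n(r)\to 1$ as $r\to\infty$. Using the scaling $q_y(z) = y^{-d/\alpha} q_1(y^{-1/\alpha} z)$ — wait, the correct scaling stated after~\eqref{eq:qy} is $q_t(z) = t^{d/\alpha} q_1(t^{1/\alpha} z)$; I would double-check the exponent and use whichever form is consistent with $\fourier q_y(\xi) = \ph_{|\xi|^2}(y)$ and the half-space case $\alpha=1$. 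Granting the correct scaling, the same Fubini manipulation as in Lemma~\ref{lem:pis} gives
\formula{
 q_y(z) & = y \int_0^\infty \nu_{y^{1/\alpha} r}(z) \, n'(r) \, dr ,
}
so that for $f \in \dom(L_I, x)$, with $\ph(r) = \int_{\R^d} (f(x+z)-f(x)) \nu_r(z)\,dz$,
\formula{
 \int_{\R^d} (f(x+z)-f(x)) \, \frac{q_y(z)}{y} \, dz & = \int_0^\infty \ph(y^{1/\alpha} r) \, n'(r) \, dr .
}
Since $\ph$ is bounded near $0$ (because it converges to $\ph(0^+)=L_I f(x)$) and $\int_0^\infty n'(r)\,dr = 1$, dominated convergence yields $L_H f(x) = \ph(0^+) = L_I f(x)$ once we know $|n'(r)|$ is integrable on $(0,\infty)$.

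The one point that needs genuine work is therefore integrability of $|n'(r)|$ — this is the analogue of the Abelian--Tauberian argument in Lemma~\ref{lem:pis}. Here, however, the situation is easier: $n(r) = r^{d+\alpha}(c_\alpha^{-2/\alpha}+r^2)^{-(d+\alpha)/2}$ is elementary, so one differentiates directly. A short computation gives $n'(r) = (d+\alpha)\, c_\alpha^{-2/\alpha}\, r^{d+\alpha-1}(c_\alpha^{-2/\alpha}+r^2)^{-(d+\alpha)/2-1}$, which is \emph{everywhere nonnegative}. Hence $\int_0^\infty |n'(r)|\,dr = \int_0^\infty n'(r)\,dr = n(\infty)-n(0) = 1 < \infty$, and no delicate asymptotic analysis is needed. (In contrast to $m'$, which changes sign depending on whether $\alpha<1$ or $\alpha>1$, the harmonic kernel $q_1$ is completely monotone-like in this profile sense, reflecting the fact that $q_y$ comes from a genuine half-space Poisson-type problem.)

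The main obstacle, such as it is, is purely bookkeeping: getting the scaling exponent for $q_y$ right and verifying that the substitution producing $q_y(z) = y \int_0^\infty \nu_{y^{1/\alpha} r}(z)\, n'(r)\,dr$ is carried out with the correct powers of $y$, so that the change of variables in the inner integral matches $\ph(y^{1/\alpha} r)$ exactly. Once that identity is in place, the dominated-convergence conclusion is immediate and identical to the end of the proof of Lemma~\ref{lem:pis}. I would write: ``The proof is identical to that of Lemma~\ref{lem:pis}, with $p_t$ replaced by $q_y$ and with the profile function $m$ replaced by $n(r) = r^{d+\alpha}(c_\alpha^{-2/\alpha}+r^2)^{-(d+\alpha)/2}$; integrability of $|n'|$ is immediate since $n$ is increasing from $0$ to $1$,'' and leave the routine verifications to the reader, in keeping with the style of the surrounding lemmas.
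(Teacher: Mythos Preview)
Your proposal is correct and is exactly the approach taken in the paper: the paper's proof consists of the single observation that the argument of Lemma~\ref{lem:pis} goes through verbatim because the profile function of $c_{d,\alpha}^{-1}|z|^{d+\alpha}q_1(z)$, namely $m(r)=(r^2/(c_\alpha^{-2/\alpha}+r^2))^{(d+\alpha)/2}$, is increasing, so that $|m'|$ is integrable with integral~$1$. Your bookkeeping worry about the scaling is well-founded (the displayed $q_t(z)=t^{d/\alpha}q_1(t^{1/\alpha}z)$ in the paper appears to be a misprint; the correct relation is $q_y(z)=y^{-d/\alpha}q_1(y^{-1/\alpha}z)$, as you can verify from~\eqref{eq:qy}), but once that is straightened out the argument is identical.
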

\end{leftbar}

\begin{proof}
The argument is exactly the same as the proof of Lemma~\ref{lem:pis}, once we note that by~\eqref{eq:qy}, the profile function of $c_{d,\alpha}^{-1} |z|^{d + \alpha} q_1(z)$, namely $m(r) = (r^2 / (c_\alpha^{-2/\alpha} + r^2))^{(d + \alpha)/2}$, is increasing.
\end{proof}

In order to prove a similar result connecting~\eqref{eq:H} and~\eqref{eq:S}, one would need some relationship between $q_y(z)$ and $p_t(z)$. The statement is trivial for $\alpha = 1$, because then $q_y(z) = p_y(z)$. For $\alpha \in (1, 2)$, we conjecture that $\ph(r) = \sqrt{r} \, K_{\alpha/2}(r^{1/\alpha})$ is completely monotone. If this is true, $\fourier q_y(\xi)$ can be expressed as the integral average of $\fourier p_t(\xi) = e^{-t |\xi|^\alpha}$, and so $f \in \dom(L_S, x)$ implies $f \in \dom(L_H, x)$. For $\alpha \in (0, 1)$ it is unclear whether $\fourier p_t(\xi)$ can be expressed as the integral average of $\fourier q_y(\xi)$.

\begin{leftbar}
\begin{conjecture}
For $\alpha \in (1, 2)$ the function $\sqrt{r} \, K_{\alpha/2}(r^{1/\alpha})$ is completely monotone in $r \in (0, \infty)$.
\end{conjecture}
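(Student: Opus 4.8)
The plan is to write $\phi(w):=\sqrt{w}\,K_{\alpha/2}(w^{1/\alpha})$ as the Laplace transform of a non-negative measure on $(0,\infty)$ and then appeal to Bernstein's theorem. The hypothesis $\alpha\in(1,2)$ will be used in two places, both of which fail for $\alpha<1$: it makes $w\mapsto w^{1/\alpha}$ a Bernstein function, hence $w\mapsto e^{-tw^{1/\alpha}}$ completely monotone for every $t>0$, and it makes the exponent $(\alpha-1)/2$ positive. (For $\alpha=1$ everything below degenerates to the trivial case $q_y=p_y$.)

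First I would insert into $\phi$ the classical integral representation $K_\nu(v)=\dfrac{\sqrt{\pi}\,(v/2)^{\nu}}{\Gamma(\nu+\tfrac{1}{2})}\int_{1}^{\infty}e^{-vt}(t^{2}-1)^{\nu-1/2}\,dt$, valid for $\re\nu>-\tfrac{1}{2}$. Taking $\nu=\tfrac{\alpha}{2}$ contributes exactly one extra factor $w$, so that
\[
\phi(w)=C_{\alpha}\,w\int_{1}^{\infty}e^{-w^{1/\alpha}t}(t^{2}-1)^{(\alpha-1)/2}\,dt,\qquad C_{\alpha}=\frac{\sqrt{\pi}}{2^{\alpha/2}\,\Gamma(\tfrac{\alpha+1}{2})}.
\]
Using $e^{-w^{1/\alpha}t}=e^{-(wt^{\alpha})^{1/\alpha}}=\int_{0}^{\infty}e^{-w\sigma}\,t^{-\alpha}\eta(\sigma t^{-\alpha})\,d\sigma$, where $\eta\ge 0$ is the density at time $1$ of the $\tfrac{1}{\alpha}$-stable subordinator, and applying Tonelli, I obtain $\phi(w)=C_{\alpha}\,w\int_{0}^{\infty}e^{-w\sigma}\lambda(\sigma)\,d\sigma$ with $\lambda(\sigma)=\int_{1}^{\infty}t^{-\alpha}\eta(\sigma t^{-\alpha})(t^{2}-1)^{(\alpha-1)/2}\,dt\ge 0$.

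The step I expect to be the crux — and the one needing an idea — is showing that $\lambda$ is non-decreasing. Here I would substitute $u=\sigma t^{-\alpha}$; after collecting the resulting powers the integral collapses to the clean form
\[
\lambda(\sigma)=\frac{1}{\alpha}\int_{0}^{\sigma}\frac{\eta(u)}{u}\Bigl(1-(u/\sigma)^{2/\alpha}\Bigr)^{(\alpha-1)/2}\,du,
\]
in which monotonicity is now immediate: increasing $\sigma$ both enlarges the interval of integration and, since $\alpha>1$ makes $x\mapsto x^{(\alpha-1)/2}$ increasing while $(u/\sigma)^{2/\alpha}$ decreases, increases the integrand pointwise. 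From the standard behaviour of $\eta$ (faster-than-polynomial decay at $0$ and $\eta(u)\sim c\,u^{-1-1/\alpha}$ at $\infty$) one checks $\lambda(0^{+})=0$ and $\lambda(\infty)=\tfrac{1}{\alpha}\int_{0}^{\infty}\eta(u)u^{-1}\,du=\Gamma(\alpha)<\infty$; the latter equals $\phi(0^{+})=2^{\alpha/2-1}\Gamma(\tfrac{\alpha}{2})$ after one application of the Legendre duplication formula, which is a reassuring consistency check.

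To conclude, since $\lambda$ is bounded, non-decreasing and vanishes at $0^{+}$, integration by parts (with vanishing boundary terms) gives $w\int_{0}^{\infty}e^{-w\sigma}\lambda(\sigma)\,d\sigma=\int_{(0,\infty)}e^{-w\sigma}\,d\lambda(\sigma)$, so $\phi(w)=C_{\alpha}\int_{(0,\infty)}e^{-w\sigma}\,d\lambda(\sigma)$ is the Laplace transform of the finite non-negative measure $C_{\alpha}\,d\lambda$, hence completely monotone. I would also remark that this computation identifies the mixing measure: it shows $\fourier q_{y}$ is a completely monotone function of $|\xi|^{\alpha}$, equivalently that $q_{y}=\int_{0}^{\infty}p_{t}\,\rho_{y}(dt)$ is a scale mixture of the kernels $p_{t}$ with $\rho_{y}$ a rescaled copy of $d\lambda$ — equivalently still, that the inverse-Gamma$(\tfrac{\alpha}{2})$ law is the value of the $\tfrac{\alpha}{2}$-stable subordinator at an independent random time. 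Apart from the one change of variable, the whole argument is bookkeeping with well-known facts about $K$-Bessel functions, stable subordinators, Bernstein functions, and Bernstein's theorem.
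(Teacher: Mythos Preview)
The paper states this as a \emph{conjecture} and gives no proof; it is left open precisely because the author did not see how to express $\fourier q_y$ as an average of the functions $\fourier p_t$. Your argument, if correct, therefore resolves an open problem in the paper rather than reproducing an existing proof. Having checked the details, I believe it is correct.

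The computation goes through cleanly. The Schl\"afli--Watson integral for $K_{\alpha/2}$ is valid since $\alpha/2>1/2>-1/2$, and substituting $v=w^{1/\alpha}$ indeed produces the extra factor $w^{1/2}$ needed to turn $\sqrt{w}\,K_{\alpha/2}(w^{1/\alpha})$ into $C_\alpha\,w\int_1^\infty e^{-w^{1/\alpha}t}(t^2-1)^{(\alpha-1)/2}\,dt$. Because $1/\alpha\in(0,1)$, the function $e^{-w^{1/\alpha}}$ is the Laplace transform of the $(1/\alpha)$-stable subordinator density $\eta\ge 0$, and the scaling $e^{-w^{1/\alpha}t}=\int_0^\infty e^{-w\sigma}t^{-\alpha}\eta(\sigma t^{-\alpha})\,d\sigma$ is correct. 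Tonelli is justified by non-negativity. The change of variable $u=\sigma t^{-\alpha}$ in $\lambda(\sigma)$ I have verified: one gets $t^{1-\alpha}(t^2-1)^{(\alpha-1)/2}=(1-(u/\sigma)^{2/\alpha})^{(\alpha-1)/2}$ exactly, so
\[
\lambda(\sigma)=\frac{1}{\alpha}\int_0^\sigma \frac{\eta(u)}{u}\Bigl(1-(u/\sigma)^{2/\alpha}\Bigr)^{(\alpha-1)/2}\,du,
\]
and monotonicity in $\sigma$ is then transparent for $\alpha>1$, since both the interval and the integrand grow. The asymptotics of $\eta$ (super-exponential decay at $0^+$, tail $\sim c\,u^{-1-1/\alpha}$) make $\eta(u)/u$ integrable on $(0,\infty)$; one can also compute $\int_0^\infty\eta(u)u^{-1}\,du=\int_0^\infty e^{-w^{1/\alpha}}\,dw=\alpha\Gamma(\alpha)$ directly, so $\lambda$ is bounded with $\lambda(0^+)=0$. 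The integration by parts then has vanishing boundary terms, yielding $\phi(w)=C_\alpha\int_{(0,\infty)}e^{-w\sigma}\,d\lambda(\sigma)$ with $d\lambda\ge 0$ finite, and Bernstein's theorem finishes the argument. Your consistency check $C_\alpha\Gamma(\alpha)=2^{\alpha/2-1}\Gamma(\tfrac{\alpha}{2})=\phi(0^+)$ via Legendre duplication is also correct.

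In short: the paper conjectures, you prove. Your probabilistic interpretation at the end---that the inverse-Gamma$(\tfrac{\alpha}{2})$ law is the marginal of the $(1/\alpha)$-stable subordinator at an independent random time with law proportional to $d\lambda$---is exactly the content of the implication $\dom(L_S,x)\subset\dom(L_H,x)$ for $\alpha\in(1,2)$ that the paper wanted but could not establish.
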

\end{leftbar}

As in Lemma~\ref{lem:dip}, the inclusions in Lemmas~\ref{lem:pis} and~\ref{lem:pih} are proper. An example of $f \in \dom(L_S, x)$ which is not in $\dom(L_I, x)$ is, however, more complicated, and we only sketch the argument.

\begin{example}
Let
\formula{
 f(x + z) & = \sum_{n = 1}^\infty \eps_n |z|^{1 + \alpha} (\ind_{[r_n, (1 + \delta_n) r_n]}(|z|) - \ind_{[(1 - \delta_n) r_n, r_n]}(|z|)) ,
}
where, for example, $r_n = 2^{-n}$, $\delta_n = 4^{-n}$ and $\eps_n = n 8^n$. First of all, $f$ is easily proved to be integrable. Due to cancellations, the integral of $f(x + z) |z|^{-d - \alpha}$ over $z \in B_{(1 + \delta_n) r_n} \setminus B_{(1 - \delta_n) r_n}$ is zero, and therefore
\formula{
 \int_{\R^d} f(x + z) \nu_{r_n}(z) dz & = c_{d,\alpha} \int_{B_{(1 + \delta_n) r_n} \setminus B_{r_n}} f(x + z) |z|^{-d - \alpha} dz = c_{d,\alpha} d |B| \eps_n \delta_n r_n \to \infty .
}
It follows that $f \notin \dom(L_I, x)$. However, if $m(r)$ denotes the profile function of $|z|^{d + \alpha} p_1(z)$ and $M$ is the supremum of $|m'(r)| (1 + r^{1 + \alpha})$ (which was shown to be finite in the proof of Lemma~\ref{lem:pis}), then
\formula{
 \abs{\int_{\R^d} f(x + z) \, \frac{p_t(z)}{t} \, dz} & \le \sum_{n = 1}^\infty \eps_n \abs{\expr{\int_{B_{(1 + \delta_n) r_n} \setminus B_{r_n}} - \int_{B_{r_n} \setminus B_{(1 - \delta_n) r_n}}} |z|^{1 + \alpha} \, \frac{p_t(z)}{t} \, dz} \\
 & = \sum_{n = 1}^\infty d |B| \eps_n \abs{\expr{\int_{r_n}^{(1 + \delta_n) r_n} - \int_{(1 - \delta_n) r_n}^{r_n}} m(t^{-1/\alpha} r) dr} \displaybreak[0] \\
 & = \sum_{n = 1}^\infty d |B| \eps_n r_n \abs{\int_0^{\delta_n} (m(t^{-1/\alpha} (1 + s) r_n) - m(t^{-1/\alpha} (1 - s) r_n)) ds} \displaybreak[0] \\
 & \le \sum_{n = 1}^\infty d |B| \eps_n r_n \, \frac{t^{-1/\alpha} r_n M \delta_n^2}{1 + (\tfrac{1}{2} t^{-1/\alpha} r_n)^{1 + \alpha}} \\
 & = \sum_{n = 1}^\infty 2^{1 + \alpha} d |B| M \eps_n \delta_n^2 r_n^{1 - \alpha} \, \frac{t}{(2 t^{1/\alpha} r_n^{-1})^{1 + \alpha} + 1} \to 0
}
as $t \to 0^+$. This proves that $f \in \dom(L_S, x)$, and a similar argument shows that $f \in \dom(L_H, x)$. We omit the details.
\end{example}

Apparently an example can be given to prove that $f \in \dom(L_S, x)$ does not imply $f \in \dom(L_B, x)$, but the author could not work out the technical details. On the other hand, it is not true that $f \in \dom(L_B, x)$ implies $f \in \dom(L_S, x)$: pointwise convergence in~\eqref{eq:B} does not require integrability of $(1 + |z|)^{-d - \alpha} f(z)$ at infinity. For example, when $d \ge 2$, the function $f(x) = x_1^2 - x_2^2$ can be proved to belong to $\dom(L_B, x)$ (and in fact $L_B f$ is everywhere zero according to~\eqref{eq:B}), but due to fast growth of $f$ at infinity, $f$ does not belong to $\dom(L_S, x)$. This is, however, the only obstacle in the proof of the following result.

\begin{leftbar}
\begin{lemma}
\label{lem:bis}
If $f \in \dom(L_B, x)$ and $(1 + |z|)^{-d - \alpha} f(z)$ is integrable, then $f \in \dom(L_S, x)$ and $f \in \dom(L_H, x)$, and $L_B f(x) = L_S f(x) = L_H f(x)$.
\end{lemma}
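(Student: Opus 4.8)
The plan is to represent both $p_t$ and the harmonic-extension kernel $q_y$ as scale mixtures of Gauss--Weierstrass kernels and then to reduce the statement to a one-dimensional dominated-convergence argument in the subordination variable, in the spirit of the proofs of Lemmas~\ref{lem:pis} and~\ref{lem:pih}.

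First I would record the two mixture representations. For $p_t$, formula~\eqref{eq:ptbochner} says exactly that $p_t(z) = \int_0^\infty k_s(z)\, \mu_t(ds)$, where $\mu_t$ is the law at time $t$ of the $\alpha/2$-stable subordinator, with density $\mu_t(ds) = t^{-2/\alpha}\eta(t^{-2/\alpha}s)\,ds$; here $\eta$ is the density appearing in~\eqref{eq:ptbochner}, which is smooth and vanishes at $0$ and at $\infty$, hence bounded, and the crucial extra input is the tail behaviour $\eta(v) = \tfrac{1}{|\Gamma(-\tfrac{\alpha}{2})|}\, v^{-1-\alpha/2}(1 + o(1))$ as $v \to \infty$, which in particular gives $\eta(v) \le C v^{-1-\alpha/2}$ for every $v > 0$. (If one prefers not to invoke the classical asymptotics of stable densities, the value of the constant is recovered by feeding $p_t = \int_0^\infty k_s\,\mu_t(ds)$ into~\eqref{eq:heatlim}.) For $q_y$, the explicit formula~\eqref{eq:qy} together with the Gaussian-mixture identity $(a^2 + |z|^2)^{-\beta} = \tfrac{1}{\Gamma(\beta)}\int_0^\infty u^{\beta-1} e^{-u a^2} e^{-u|z|^2}\, du$ and the substitution $s = 1/(4u)$ produces the completely explicit representation $q_1(z) = \int_0^\infty k_s(z)\, \tilde\eta(s)\, ds$ with $\tilde\eta(s) = \tfrac{1}{|\Gamma(-\tfrac{\alpha}{2})|}\, s^{-1-\alpha/2} e^{-c_\alpha^{-2/\alpha}/(4s)}$; scaling then gives $q_y(z) = \int_0^\infty k_s(z)\, y^{-2/\alpha}\tilde\eta(y^{-2/\alpha}s)\, ds$. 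The function $\tilde\eta$ obeys the same bounds as $\eta$, only more transparently.

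Next I would pass to the subordination variable. Write $g(s) = e^{s\Delta}f(x) - f(x) = \int_{\R^d}(f(x+z) - f(x)) k_s(z)\, dz$. The hypothesis that $(1+|z|)^{-d-\alpha}f(z)$ is integrable (equivalently, $\int_{\R^d}|f(x+z)|(1+|z|)^{-d-\alpha}\,dz < \infty$), together with the local integrability of $f$ near $x$ that it implies, shows that $g(s)$ is finite and continuous in $s > 0$ and, using the estimate~\eqref{eq:ptest} and the scaling~\eqref{eq:ptscaling}, that $\int_{\R^d}|f(x+z)|\,p_t(z)\,dz < \infty$. Hence $P_t f(x) = f * p_t(x)$ is an absolutely convergent integral, and Tonelli applied to $p_t = \int_0^\infty k_s\,\mu_t(ds)$ gives, with $\eta_t^*(s) := \tfrac{1}{t}\, t^{-2/\alpha}\eta(t^{-2/\alpha}s)$,
\formula{
 \frac{P_t f(x) - f(x)}{t} = \frac{1}{t}\int_0^\infty g(s)\, \mu_t(ds) = \int_0^\infty g(s)\, \eta_t^*(s)\, ds ,
}
and identically for $q_y$ with $\eta_t^*$ replaced by $\tfrac{1}{y}\, y^{-2/\alpha}\tilde\eta(y^{-2/\alpha}s)$. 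From $\eta(v) \le C v^{-1-\alpha/2}$ one computes $\eta_t^*(s) \le C\, s^{-1-\alpha/2}$ for all $s, t > 0$, while the tail asymptotics of $\eta$ give $\eta_t^*(s) \to \tfrac{1}{|\Gamma(-\tfrac{\alpha}{2})|}\, s^{-1-\alpha/2}$ as $t \to 0^+$ for each fixed $s$; for $q_y$ both facts are immediate, since $\tfrac{1}{y}\, y^{-2/\alpha}\tilde\eta(y^{-2/\alpha}s) = \tfrac{1}{|\Gamma(-\tfrac{\alpha}{2})|}\, s^{-1-\alpha/2}\, e^{-c_\alpha^{-2/\alpha}y^{2/\alpha}/(4s)}$.

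It then remains to pass to the limit by dominated convergence, with dominating function $C\,|g(s)|\, s^{-1-\alpha/2}$; the point is that $\int_0^\infty |g(s)|\, s^{-1-\alpha/2}\, ds < \infty$. Indeed, the part over $(T, \infty)$ equals $\int_{\R^d}|f(x+z) - f(x)|\, \kappa_T(z)\, dz$, where $\kappa_T(z) = \int_T^\infty k_s(z)\, s^{-1-\alpha/2}\, ds \le C\min(1, |z|^{-d-\alpha})$ by~\eqref{eq:nucalc}, which is finite thanks to the integrability hypothesis and local integrability, while the part over $(0, T)$ is finite precisely because $f \in \dom(L_B, x)$. Dominated convergence then yields $\tfrac{1}{t}(P_t f(x) - f(x)) \to \tfrac{1}{|\Gamma(-\tfrac{\alpha}{2})|}\int_0^\infty g(s)\, s^{-1-\alpha/2}\, ds = L_B f(x)$, that is, $f \in \dom(L_S, x)$ with $L_S f(x) = L_B f(x)$, and the same argument with $q_y$ gives $f \in \dom(L_H, x)$ with $L_H f(x) = L_B f(x)$. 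The main obstacle — and the only place where the growth hypothesis on $f$ is genuinely used, which is exactly why $f(x) = x_1^2 - x_2^2$ must be excluded — lies in controlling $g(s)$ for large $s$, so as to secure both the absolute convergence of the Bochner integral there and the Fubini interchange; the small-$s$ behaviour is handed to us by $f \in \dom(L_B, x)$, and once the uniform bound $\eta_t^*(s) \le C s^{-1-\alpha/2}$ is available the passage to the limit is routine.
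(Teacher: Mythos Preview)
Your proof is correct and follows essentially the same approach as the paper's: represent $p_t$ (via~\eqref{eq:ptbochner}) and $q_y$ (via the explicit Gaussian-mixture formula) as integrals of Gauss--Weierstrass kernels, pass to the subordination variable using Fubini, and then apply dominated convergence with the uniform bound $t^{-1-2/\alpha}\eta(t^{-2/\alpha}s) \le C s^{-1-\alpha/2}$ and the pointwise limit coming from the tail asymptotics of $\eta$. Your write-up is more detailed than the paper's in justifying the integrability of the dominating function (the split into $(0,T)$ and $(T,\infty)$, with the latter controlled by the growth hypothesis on $f$), but the underlying argument is the same.
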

\end{leftbar}

\begin{proof}
Recall that by the Bochner's subordination formula~\eqref{eq:ptbochner},
\formula{
 p_t(z) & = t^{-2/\alpha} \int_0^\infty k_r(z) \eta(t^{-2/\alpha} r) dr ,
}
where $k_r(z)$ is the Gauss--Weierstrass kernel and $\eta(s)$ is a smooth function such that $0 < \eta(s) \le C_\alpha \min(1, s^{-1 - \alpha/2})$ for $s > 0$ and $\lim_{s \to \infty} s^{1 + \alpha/2} \eta(s) = 1 / |\Gamma(-\tfrac{\alpha}{2})|$ (see~\cite[Remark~14.18]{bib:s99}). Therefore, by Fubini,
\formula{
 \int_{\R^d} (f(x + z) - f(x)) \, \frac{p_t(z)}{t} \, dz & = \int_0^\infty (k_r * f(x) - f(x)) t^{-1 - 2/\alpha} \eta(t^{-2/\alpha} r) dr .
}
Suppose that $f \in \dom(L_B, x)$. Then, by dominated convergence,
\formula{
 \lim_{t \to 0^+} \int_{\R^d} (f(x + z) - f(x)) \, \frac{p_t(z)}{t} \, dz & = \frac{1}{|\Gamma(-\tfrac{\alpha}{2})|} \int_0^\infty (k_r * f(x) - f(x)) r^{-1 - \alpha/2} dr ,
}
as desired.

A similar argument involving the identity
\formula{
 q_y(z) & = \frac{y^{-2/\alpha}}{|\Gamma(-\tfrac{\alpha}{2})|} \int_0^\infty k_r(z) \, \frac{1}{(y^{-2/\alpha} r)^{1 + \alpha/2}} \exp\expr{-\frac{1}{4 (c_\alpha)^{2/\alpha} y^{-2/\alpha} r}} dr ,
}
which follows easily from the gamma integral, shows that $f \in \dom(L_H, x)$ and $L_H f(x) = L_B f(x)$ (this is a variant of the result proved in~\cite{bib:st10}).
\end{proof}

\begin{figure}
\begin{tikzcd}
\eqref{eq:D} \arrow[tail]{r} & \eqref{eq:I} \arrow[tail]{r} \arrow[tail]{rd} & \eqref{eq:S} \arrow[leftrightarrow]{d}{\alpha = 1} & \eqref{eq:B} \arrow{l} \arrow{ld} \\
\eqref{eq:II} \arrow[tail]{ur} & \eqref{eq:III} \arrow[tail]{u} & \eqref{eq:H} &
\end{tikzcd}
\caption{Relationship between pointwise definitions of $L f(x)$ for $f$ such that $(1 + |z|)^{-d - \alpha} f(x + z)$ is integrable. An arrow indicates inclusion of appropriate domains, and an arrow with a tail indicates proper inclusion.}
\end{figure}
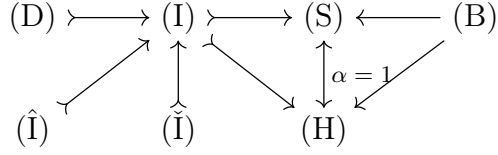

The proofs of pointwise results can be re-used for the corresponding statements for norm convergence in any of the spaces $\leb^p$, $p \in [1, \infty]$, $\conto$, $\contbu$ and $\contb$.

\begin{leftbar}
\begin{lemma}
\label{lem:norm}
Let $\X$ be any of the spaces $\leb^p$, $p \in [1, \infty]$, $\conto$, $\contbu$ and $\contb$. If $f \in \dom(L_D, \X)$, then $f \in \dom(L_I, \X)$, which in turn implies $f \in \dom(L_S, \X) \cap \dom(L_H, \X)$. Also, if $f \in \dom(L_B, \X)$, then $f \in \dom(L_S, \X) \cap \dom(L_H, \X)$. Furthermore, all definitions of $L f$ agree on appropriate domains.
\end{lemma}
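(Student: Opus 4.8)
The plan is to re-run the proofs of Lemmas~\ref{lem:dip}, \ref{lem:pis}, \ref{lem:pih} and~\ref{lem:bis}, replacing the pointwise value $L f(x)$ by an $\X$-valued limit, the scalar difference quotients $\ph(r)$ by the $\X$-valued difference quotients
\formula{
 \Phi_D(r) & = f * \tilde\nu_r - \|\tilde\nu_r\|_1 f, & \Phi_I(r) & = f * \nu_r - \|\nu_r\|_1 f, & \Psi_B(r) & = f * k_r - f,
}
and the scalar dominated convergence theorem by its counterpart for Bochner integrals. For the inclusion $\dom(L_D, \X) \subseteq \dom(L_I, \X)$ I would start from the kernel identity~\eqref{eq:numu}, which holds in $\leb^1(\R^d)$ for each fixed $R > 0$, convolve both sides with $f(\cdot + z) - f(\cdot)$, and interchange the $z$- and $r$-integrals by the Bochner--Fubini theorem; this is legitimate once $\int_R^\infty \|\Phi_D(r)\|_{\X} \, r^{-1} (r^2 - R^2)^{\alpha/2 - 1} dr < \infty$, which holds because $\Phi_D$ is bounded near $0$ (by the assumed convergence) while $\|\Phi_D(r)\|_{\X} = O(r^{-\alpha})$ as $r \to \infty$. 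This yields the $\X$-valued version of~\eqref{eq:numu},
\formula{
 \Phi_I(R) & = \frac{4 R^{2 - \alpha}}{\alpha \, \Gamma(\tfrac{\alpha}{2}) |\Gamma(-\tfrac{\alpha}{2})|} \int_R^\infty \Phi_D(r) \, \frac{1}{r (r^2 - R^2)^{1 - \alpha/2}} \, dr,
}
after which the substitution $r = R s$, the split at $r_0$ and the dominated convergence argument of Lemma~\ref{lem:dip} carry over verbatim with $\|\cdot\|_{\X}$ in place of absolute values, giving $\Phi_I(R) \to L_D f$ in $\X$; that is, $f \in \dom(L_I, \X)$ and $L_I f = L_D f$.

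The three remaining inclusions are proved in exactly the same way, using the kernel superpositions from the proofs of Lemmas~\ref{lem:pis}, \ref{lem:pih} and~\ref{lem:bis}: the identity $p_t(z)/t = \int_0^\infty \nu_{t^{1/\alpha} r}(z) \, m'(r) dr$ and its $q_y$-analogue, where $m'$ is absolutely integrable with $\int_0^\infty m'(r) dr = 1$; and $p_t(z)/t = \int_0^\infty k_r(z) \, w_t(r) dr$ together with the corresponding formula for $q_y$, where the weights satisfy $w_t(r) \le C \min(t^{-1-2/\alpha}, r^{-1-\alpha/2})$ and $w_t(r) \to r^{-1-\alpha/2}/|\Gamma(-\tfrac{\alpha}{2})|$ as $t \to 0^+$. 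Convolving each with $f(\cdot + z) - f(\cdot)$ and applying Bochner--Fubini expresses $(P_t f - f)/t$ and $(f * q_y - f)/y$ as $\X$-valued integrals of $\Phi_I$ against $m'$, respectively of $\Psi_B$ against the weights $w_t$; the dominated convergence theorem for Bochner integrals then produces the claimed $\X$-limits, using $\Phi_I(s) \to L_I f$ in $\X$ as $s \to 0^+$ for the first pair of inclusions and, for the second pair, the assumed integrability of $\|\Psi_B(r)\|_{\X} \, r^{-1-\alpha/2}$ on $(0,\infty)$ together with the stated convergence of $w_t(r)$. Chaining the resulting identifications of limits gives agreement of all the definitions involved on the common domains.

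Two points deserve attention, neither of which is serious. First, the side conditions of the pointwise lemmas are automatic here: every $f$ in one of the listed spaces is locally integrable and satisfies $(1 + |z|)^{-d - \alpha} f(z) \in \leb^1(\R^d)$, the latter by H\"older's inequality since $(1 + |z|)^{-d - \alpha} \in \leb^{p'}$ for every $p' \in [1, \infty]$. Second --- and this is the only genuine obstacle --- the Bochner integrals above must be meaningful on the non-separable spaces $\leb^\infty$ and $\contb$, on which neither the heat semigroup nor the $\alpha$-stable semigroup is strongly continuous. I would handle this by observing that the integrands $r \mapsto \Phi_D(r)$, $r \mapsto \Phi_I(r)$ and $r \mapsto \Psi_B(r)$ are \emph{continuous} $\X$-valued functions on $(0, \infty)$ --- which follows from Young's inequality and the $\leb^1$-continuity of $r \mapsto \tilde\nu_r$, $r \mapsto \nu_r$ and $r \mapsto k_r$ --- hence have separable range, are strongly measurable, and Bochner's theory applies with no strong-continuity assumption. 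No continuity of these maps at $r = 0^+$ is needed or claimed; the endpoint convergence always comes from the hypothesis $f \in \dom(L_D, \X)$, $\dom(L_I, \X)$ or $\dom(L_B, \X)$, and never from strong continuity of a semigroup. Likewise, the uniform bounds on $\|\Phi_D(r)\|_{\X}$ and $\|\Phi_I(r)\|_{\X}$ over all of $(0, \infty)$ that are needed for domination follow by combining boundedness near $0$ with the decay $O(r^{-\alpha})$ at infinity.
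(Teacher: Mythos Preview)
Your proposal is correct and follows the same strategy as the paper: replace the scalar-valued functions $\ph(r)$ in the pointwise Lemmas~\ref{lem:dip}, \ref{lem:pis}, \ref{lem:pih} and~\ref{lem:bis} by the $\X$-valued difference quotients, use the same kernel superposition identities, and invoke dominated convergence for the norms. The paper in fact only writes out the case $\dom(L_I,\leb^p)\subseteq\dom(L_S,\leb^p)$ and declares the rest ``very similar''; it works pointwise in $x$ via ordinary Fubini and then bounds the $\leb^p$ norm by Minkowski's integral inequality, rather than setting up genuine Bochner integrals. Your extra care about strong measurability on the non-separable spaces $\leb^\infty$ and $\contb$ (via continuity of $r\mapsto\tilde\nu_r,\nu_r,k_r$ in $\leb^1$) is a point the paper does not address; it is a real improvement in rigour, though not a different idea.
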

\end{leftbar}

\begin{proof}
Below we only prove that $f \in \dom(L_I, \leb^p)$ implies $f \in \dom(L_S, \leb^p)$, the other statements being very similar. Recall that
\formula{
 p_t(z) & = t \int_0^\infty \nu_{t^{1/\alpha} r}(z) m'(r) dr ,
}
where $m'(r)$ is an absolutely integrable function, with integral $1$. Denote
\formula{
 \ph_r(x) & = \int_{\R^d} (f(x + z) - f(x)) \nu_r(z) dz .
}
If $f \in \dom(L_I, \leb^p)$, then $\ph_r$ converges to $\ph_{0+} = L_I f$ in $\leb^p$. By Fubini,
\formula{
 \int_{\R^d} (f(x + z) - f(x)) \, \frac{p_t(z)}{z} \, dz  - \ph_{0+}(x) & = \int_0^\infty (\ph_{t^{1/\alpha} r}(x) - \ph_{0+}(x)) m'(r) dr ,
}
and so, by dominated convergence,
\formula{
 \lim_{t \to 0^+} \|\tfrac{1}{t} (P_t f - f) - \ph_{0+}\|_p & \le \lim_{t \to 0^+} \int_0^\infty \|\ph_{t^{1/\alpha} r}(x) - \ph_{0+}(x)\|_p |m'(r)| dr = 0 ,
}
as desired.
\end{proof}

%
%

\section{M.~Riesz formulae}
\label{sec:riesz}

In the present section we provide further links between convergence in various Banach spaces, which are consequences of the following identity, discussed in detail in Section~\ref{sec:process}: for all $f \in \dom(L_S, \contbu)$ and all $y \in B_r$,
\formula[eq:dynkinb]{
 f(x + y) & = -\int_{B_r} L_S f(x + z) \gamma_r(y, z) dz + \int_{\R^d \setminus \overline{B}_r} f(x + z) \pi_r(y, z) dz ,
}
where
\formula[eq:pir]{
 \pi_r(y, z) & = \frac{2 \Gamma(\tfrac{d}{2})}{\alpha \, \pi^{d/2} \Gamma(\tfrac{\alpha}{2}) |\Gamma(-\tfrac{\alpha}{2})|} \, \frac{(r^2 - |y|^2)^{\alpha/2}}{|z|^d (|z|^2 - r^2)^{\alpha/2}}
}
is the \emph{Poisson kernel of a ball} $B_r$ for $L$, and
\formula[eq:gammar]{
 \gamma_r(y, z) & = \frac{\Gamma(\tfrac{d}{2})}{2^\alpha \pi^{d/2} (\Gamma(\tfrac{\alpha}{2}))^2} \, \frac{1}{|y - z|^{d - \alpha}} \, \int_0^{\tfrac{(r^2 - |y|^2) (r^2 - |z|^2)}{r^2 |y - z|^2}} \frac{s^{\alpha/2 - 1}}{(1 + s)^{d/2}} \, ds
}
is the \emph{Green function of a ball} $B_r$ for $L$. We record that $\pi_r(y, z) dz$ is a probability measure,
\formula{
 \pi_r(y, z) & = \int_{B_r} \gamma_r(y, v) \nu(z - v) dv ,
}
and
\formula{
 \int_{B_r} \gamma_r(y, z) dz & = \lim_{|z| \to \infty} \frac{\pi_r(y, z)}{\nu(z)} = \frac{\Gamma(\tfrac{d}{2})}{\alpha 2^{\alpha - 1} \Gamma(\tfrac{\alpha}{2}) \Gamma(\tfrac{d + \alpha}{2})} \, (r^2 - |y|^2)^{\alpha/2} .
}
When $\alpha < d$, the formulae for $\pi_r(y, z)$ and $\gamma_r(y, z)$ are all essentially due to M.~Riesz, see~\cite{bib:r38a,bib:r38b}. The case $\alpha \ge d$ is addressed in~\cite{bib:k57}, see also~\cite{bib:bgr61} and the references therein. For a detailed derivation of the expression for $\pi_r(y, z)$ and further properties, we refer to~\cite[Section~IV.5]{bib:l72} and~\cite[Section~V.4]{bib:bh86}, while the above form of $\gamma_r(y, z)$ was found in~\cite{bib:bgr61}. Formula~\eqref{eq:dynkinb} for $f \in \dom(L_S, \conto)$ is a very general result, valid for all generators of Feller semigroups, provided that $\pi_r(y, z)$ and $\gamma_r(y, z)$ are replaced by appropriate kernels, which are typically not given by closed-form expressions, see~\cite{bib:d65}.

We remark that by~\cite[formula~3.194.1]{bib:gr07}
\formula{
 \gamma_r(y, z) & = \frac{\Gamma(\tfrac{d}{2}) (r^2 - |y|^2)^{\alpha/2} (r^2 - |z|^2)^{\alpha/2}}{2^{\alpha - 1} \alpha \, \pi^{d/2} (\Gamma(\tfrac{\alpha}{2}))^2 r^\alpha |y - z|^d} \, {_2F_1}(\tfrac{d}{2}, \tfrac{\alpha}{2}; 1 + \tfrac{\alpha}{2}; -\tfrac{(r^2 - |y|^2) (r^2 - |z|^2)}{r^2 |y - z|^2}) .
}
For $y = 0$, by~\cite[formula~9.132.1]{bib:gr07}, when $\alpha \ne d$,
\formula{
 \gamma_r(0, z) & = \frac{c_{d,-\alpha}}{|z|^{d - \alpha}} - \frac{\Gamma(\tfrac{d}{2})}{2^{\alpha - 1} (d - \alpha) \pi^{d/2} (\Gamma(\tfrac{\alpha}{2}))^2} (r^2 - |z|^2)^{\alpha/2} {_2F_1}(\tfrac{d}{2}, 1; \tfrac{d - \alpha}{2}; \tfrac{|z|^2}{r^2}) .
}
and since ${_2F_1}(\tfrac{1}{2}, \tfrac{1}{2}; 1; z) = z^{-1/2} \arsinh z^{1/2}$, if $\alpha = d$ (and so $\alpha = d = 1$),
\formula{
 \gamma_r(0, z) & = \frac{1}{\pi} \, \arsinh \frac{\sqrt{r^2 - |z|^2}}{|z|} \, .
}
Similar expressions can be given for general $y \in B_r$.

In fact we do not need the explicit expressions for $\pi_r$ and $\gamma_r$; we are satisfied with the existence of $\pi_r(0, z)$ and $\gamma_r(0, z)$ such that~\eqref{eq:dynkinb} holds with $y = 0$, and the identity
\formula[eq:gammapinu]{
 \tilde{\nu}_r(z) & = \expr{\int_{B_r} \gamma_r(0, v) dv}^{-1} \pi_r(0, z) ,
}
with $\tilde{\nu}_r(z)$ defined by~\eqref{eq:tnu}. As remarked above, the appropriate kernels $\pi_r$ and $\gamma_r$ exist for every generator of a Feller semigroup, and~\eqref{eq:gammapinu} can be taken as the definition of $\tilde{\nu}_r(z)$. Therefore, the following result extends easily to much more general (at least translation-invariant) generators of Feller semigroups.

\begin{leftbar}
\begin{lemma}
\label{lem:dynkin}
Let $\X$ be any of the spaces $\leb^p$, $p \in [1, \infty)$, $\conto$ or $\contbu$. If $f \in \dom(L_S, \X)$, then $f \in \dom(L_D, \X)$, and $L_S f = L_D f$.
\end{lemma}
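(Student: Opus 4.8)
The plan is to specialise the M.~Riesz identity~\eqref{eq:dynkinb} to $y = 0$, to rewrite it by means of~\eqref{eq:gammapinu} so that it expresses the Dynkin difference quotient as a weighted average of $L_S f$ over the shrinking ball $B_r$, and then to let $r \to 0^+$ using strong continuity of translations on $\X$. The passage from $\contbu$ to a general $\X$ is handled by smoothing with the semigroup.

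For the first step, both $\pi_r(0, \cdot)$ and $\gamma_r(0, \cdot)$ are nonnegative and radial (hence even); $\pi_r(0, \cdot)\,dz$ is a probability measure, while $\gamma_r(0, \cdot)$ is supported in $B_r$ and $\int_{B_r}\gamma_r(0, v)\,dv = G_r$ is finite, so $\gamma_r(0, \cdot) \in \leb^1(\R^d)$. Reading~\eqref{eq:dynkinb} with $y = 0$ as an identity of functions of $x$ and using that $\int_{\R^d} h(x + z)\gamma_r(0, z)\,dz = (h * \gamma_r(0, \cdot))(x)$ by evenness (similarly for $\pi_r$), we get $f = f * \pi_r(0, \cdot) - (L_S f) * \gamma_r(0, \cdot)$ for $f \in \dom(L_S, \contbu)$. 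By~\eqref{eq:gammapinu} we have $\pi_r(0, z) = G_r\,\tilde\nu_r(z)$, hence $\|\tilde\nu_r\|_1 = 1/G_r$, so the Dynkin difference quotient becomes
\formula{
 \int_{\R^d}(f(\cdot + z) - f(\cdot))\,\tilde\nu_r(z)\,dz & = f * \tilde\nu_r - \|\tilde\nu_r\|_1 f = \tfrac{1}{G_r}\bigl(f * \pi_r(0, \cdot) - f\bigr) \\
 & = \tfrac{1}{G_r}\,(L_S f) * \gamma_r(0, \cdot) .
}

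Next we remove the restriction to $\contbu$. For $f \in \dom(L_S, \X)$ and $\eps > 0$, the function $P_\eps f$ lies in $\X$ and, since $p_\eps$ is bounded, uniformly continuous and in every $\leb^q$ (cf.~\eqref{eq:ptest}), also in $\contbu$; moreover $P_\eps \colon \X \to \contbu$ is bounded and commutes with the difference quotients in~\eqref{eq:S}, so $P_\eps f \in \dom(L_S, \contbu)$ with $L_S(P_\eps f) = P_\eps(L_S f)$. Applying the identity above to $P_\eps f$ and letting $\eps \to 0^+$ — using strong continuity of $(P_t)$ on $\X$, so that $P_\eps f \to f$ and $P_\eps(L_S f) \to L_S f$ in $\X$, together with boundedness on $\X$ of convolution against the $\leb^1$ kernels $\pi_r(0, \cdot)$ and $\gamma_r(0, \cdot)$ — we obtain, for every $f \in \dom(L_S, \X)$ and every $r > 0$, the identity $\int_{\R^d}(f(\cdot + z) - f(\cdot))\,\tilde\nu_r(z)\,dz = \tfrac{1}{G_r}\,(L_S f) * \gamma_r(0, \cdot)$ in $\X$. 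Since $\mu_r(dz) = G_r^{-1}\gamma_r(0, z)\ind_{B_r}(z)\,dz$ is a probability measure supported in $B_r$, the generalised Minkowski inequality gives
\formula{
 \Bigl\|\tfrac{1}{G_r}\,(L_S f) * \gamma_r(0, \cdot) - L_S f\Bigr\|_{\X} & \le \int_{B_r}\|L_S f(\cdot - z) - L_S f\|_{\X}\,\mu_r(dz) \\
 & \le \sup_{|z| \le r}\|L_S f(\cdot - z) - L_S f\|_{\X} ,
}
which tends to $0$ as $r \to 0^+$ because translations act strongly continuously on each of $\leb^p$ ($p < \infty$), $\conto$ and $\contbu$; hence $f \in \dom(L_D, \X)$ and $L_D f = L_S f$.

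The step requiring the most care is the smoothing argument: one must verify that $P_\eps f \in \contbu$ and $P_\eps f \in \dom(L_S, \contbu)$ for an arbitrary $f \in \X$, which rests on the regularity and decay of $p_t$ recorded in~\eqref{eq:ptest} and on the mapping properties of $P_\eps$ between the spaces involved. Everything else is a routine combination of the closed-form M.~Riesz kernels with the strong continuity of translations and of the semigroup.
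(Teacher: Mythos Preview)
Your proof is correct and follows the same core strategy as the paper: specialise the M.~Riesz identity~\eqref{eq:dynkinb} at $y=0$ to write the Dynkin difference quotient as a $\gamma_r(0,\cdot)$-average of $L_S f$, then let $r\to 0^+$ using strong continuity of translations on~$\X$. The one notable difference is in the smoothing step used to pass from $\contbu$ to $\leb^p$. The paper mollifies with a compactly supported approximate identity $g_\eps$, obtains $f_\eps\in\conto^\infty$, and then invokes Lemma~\ref{lem:smooth} to conclude $f_\eps\in\dom(L_S,\conto)$; it then applies Minkowski to the mollified inequality and only afterwards sends $\eps\to 0$. You instead smooth with the semigroup itself, using that $P_\eps:\leb^p\to\contbu$ is bounded (since $p_\eps\in\leb^q$ for the conjugate exponent, by~\eqref{eq:ptest}) and commutes with the difference quotients, so that $P_\eps f\in\dom(L_S,\contbu)$ with $L_S(P_\eps f)=P_\eps(L_S f)$ follows directly without Lemma~\ref{lem:smooth}; you then pass $\eps\to 0^+$ in the \emph{identity} (using that convolution with the $\leb^1$ kernels $\tilde\nu_r$ and $\gamma_r(0,\cdot)$ is continuous on~$\X$) before applying Minkowski. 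Your ordering is a bit cleaner and more self-contained, while the paper's mollifier choice is more portable to operators whose heat kernel might lack the explicit decay~\eqref{eq:ptest}.
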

\end{leftbar}

\begin{proof}
When $\X = \conto$ or $\X = \contbu$, the result is a direct consequence of~\eqref{eq:dynkinb} and uniform continuity of $L_s f$: we have
\formula{
 & \int_{\R^d} (f(x + z) - f(x)) \tilde{\nu}_r(z) dz - L_S f(x) & \\
 & \hspace*{4em} = \expr{\int_{B_r} \gamma_r(0, z) dz}^{-1} \expr{\int_{\R^d \setminus \overline{B}_r} f(x + z) \pi_r(0, z) dz - f(x)} - L_S f(x) \\
 & \hspace*{4em} = \expr{\int_{B_r} \gamma_r(0, z) dz}^{-1} \int_{B_r} (L_S f(x + z) - L_S f(x)) \gamma_r(0, z) dz ,
}
and so
\formula[eq:dynkin:est]{
\begin{aligned}
 & \sup \set{\abs{\int_{\R^d} (f(x + z) - f(x)) \tilde{\nu}_r(z) dz - L_S f(x)} : x \in \R^d} \\
 & \hspace*{8em} \le \sup \{|L_S f(x + z) - L_S f(x)| : x \in \R^d, \, z \in B_r\} .
\end{aligned}
}
The right-hand side converges to $0$ as $r \to 0^+$, which proves the result for $\X = \conto$ or $\X = \contbu$.

Suppose that $\X = \leb^p$ for some $p \in [1, \infty)$. Let $g_\eps$ be a smooth approximate identity: $g_\eps(z) = \eps^{-d} g(\eps^{-1} z)$, where $g(z) \ge 0$, $\int_{\R^d} g(z) dz = 1$ and $g(z) = 0$ for $z \notin B$. Let $f \in \dom(L_S, \leb^p)$, and define $f_\eps = f * g_\eps$. By Fubini
\formula{
 \tfrac{1}{t} (P_t f_\eps - f_\eps) - L_S f * g_\eps & = (\tfrac{1}{t} (P_t f - f) - L_S f) * g_\eps .
}
Since the convolution with $g_\eps$, as an operator on $\leb^p$, has norm bounded by $\|g_\eps\|_1 = 1$, we have
\formula{
 \|\tfrac{1}{t} (P_t f_\eps - f_\eps) - L_S f * g_\eps\|_p & \le \|\tfrac{1}{t} (P_t f - f) - L_S f\|_p ,
}
and the right-hand side converges to $0$ as $t \to 0^+$. Therefore, $f_\eps \in \dom(L_S, \leb^p)$ and $L_S f_\eps = L_S f * g_\eps$.

In the above expressions $L_S$ is defined by~\eqref{eq:S}, with the limit in $\leb^p$. Observe, however, that $f_\eps \in \conto^\infty$, and hence, by Lemma~\ref{lem:smooth}, $f_\eps \in \dom(L_S, \conto)$. Since the limits in $\leb^p$ and $\conto$ coincide, we may write $L_S f_\eps = L_S f * g_\eps$, where $L_S f_\eps$ is defined by~\eqref{eq:S} with the limit in $\conto$, while $L_S f$ is defined by~\eqref{eq:S} with the limit in $\leb^p$. As in~\eqref{eq:dynkin:est}, using Minkowski's integral inequality,
\formula[eq:dynkin:aux]{
\begin{aligned}
 & \int_{\R^d} \abs{\int_{\R^d} (f_\eps(x + z) - f_\eps(x)) \tilde{\nu}_r(z) dz - L_S f_\eps(x)}^p dx \\
 & \hspace*{8em} \le \expr{\int_{B_r} \gamma_r(0, z) dz}^{-1} \int_{B_r} \int_{\R^d} |L_S f_\eps(x + z) - L_S f_\eps(x)|^p dx dz \\
 & \hspace*{8em} \le \sup \set{\int_{\R^d} |L_S f_\eps(x + z) - L_S f_\eps(x)|^p dx : z \in B_r} .
\end{aligned}
}
Recall that $f_\eps = f * g_\eps$ and $L_S f_\eps = L_S f * g_\eps$, so that as $\eps \to 0^+$, $f_\eps$ converges in $\leb^p$ to $f$, while $L_S f_\eps$ converges in $\leb^p$ to $L_S f$. Since $\tilde{\nu}_r \in \leb^1$, the convolution with $\tilde{\nu}_r$ is a continuous operator on $\leb^p$. Therefore, the left-hand side of~\eqref{eq:dynkin:aux} converges to
\formula{
 & \int_{\R^d} \abs{\int_{\R^d} (f(x + z) - f(x)) \tilde{\nu}_r(z) dz - L_S f(x)}^p dx .
}
On the other hand, the right-hand side of~\eqref{eq:dynkin:aux} is bounded above by
\formula{
 \sup \set{\int_{\R^d} |L_S f(x + z) - L_S f(x)|^p dx : z \in B_r} ,
}
which converges to $0$ as $r \to 0^+$, because $L_S f(x + z)$, as a function of $x$, converges in $\leb^p$ to $L_s f(x)$ when $z \to 0$. It follows that
\formula{
 \lim_{r \to 0^+} \int_{\R^d} \abs{\int_{\R^d} (f(x + z) - f(x)) \tilde{\nu}_r(z) dz - L_S f(x)}^p dx = 0 ,
}
as desired.
\end{proof}

As a direct consequence of Lemmas~\ref{lem:norm} and~\ref{lem:dynkin}, we obtain the following interesting statement, which appears to be partially new.

\begin{leftbar}
\begin{lemma}
\label{lem:ndis}
Let $\X$ be any of the spaces $\leb^p$, $p \in [1, \infty)$, $\conto$ or $\contbu$. Then the following conditions are equivalent: $f \in \dom(L_D, \X)$; $f \in \dom(L_I, \X)$; $f \in \dom(L_S, \X)$, and $L_D f = L_I f = L_S f$.
\end{lemma}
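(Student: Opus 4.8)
The plan is simply to chain the two preceding lemmas into a cycle of inclusions. First, Lemma~\ref{lem:norm} supplies two of the three inclusions: $\dom(L_D, \X) \subseteq \dom(L_I, \X)$, with $L_D f = L_I f$ on that set, and $\dom(L_I, \X) \subseteq \dom(L_S, \X)$, with $L_I f = L_S f$ there. Second, Lemma~\ref{lem:dynkin} closes the cycle by giving $\dom(L_S, \X) \subseteq \dom(L_D, \X)$ together with $L_S f = L_D f$. At this last step it is essential that $\X$ is restricted to $\leb^p$ with $p \in [1, \infty)$, $\conto$ or $\contbu$ — precisely the spaces for which Lemma~\ref{lem:dynkin} is proved — so I would state the hypothesis accordingly; on $\leb^\infty$ or $\contb$ the argument would fail here, which is why those spaces are excluded.

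Combining the three inclusions, the sets $\dom(L_D, \X)$, $\dom(L_I, \X)$ and $\dom(L_S, \X)$ are all equal. On this common domain the three identities $L_D f = L_I f$, $L_I f = L_S f$ and $L_S f = L_D f$ furnished above are mutually consistent and together give $L_D f = L_I f = L_S f$, which is the assertion. The write-up should be only a few lines.

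I do not expect any genuine obstacle: all the analytic content has already been done — the kernel identity~\eqref{eq:numu} relating $\nu_R$ to $\tilde{\nu}_r$ that underlies Lemma~\ref{lem:norm}, and the M.~Riesz representation~\eqref{eq:dynkinb} together with~\eqref{eq:gammapinu} that underlies Lemma~\ref{lem:dynkin}. The one point worth a sentence of care is that the three operators are identified on the \emph{entire} common domain, not merely along a chain of nested subdomains; but since the three domains turn out to coincide, this is automatic and needs no separate argument.
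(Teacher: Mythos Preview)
Your proposal is correct and matches the paper's own proof exactly: the paper states the lemma ``as a direct consequence of Lemmas~\ref{lem:norm} and~\ref{lem:dynkin}'' with no further argument, chaining $\dom(L_D,\X)\subseteq\dom(L_I,\X)\subseteq\dom(L_S,\X)$ from Lemma~\ref{lem:norm} and closing the cycle via $\dom(L_S,\X)\subseteq\dom(L_D,\X)$ from Lemma~\ref{lem:dynkin}. Your remark about why $\leb^\infty$ and $\contb$ are excluded is apt and can be kept as a brief aside.
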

\end{leftbar}

When $\X = \conto$, the equivalence of~\eqref{eq:S} and~\eqref{eq:D} in the above proposition is a standard (and general) result, see~\cite[Theorem~5.5]{bib:d65}. Equivalence of~\eqref{eq:S} and~\eqref{eq:I} is also known, at least when $\X = \leb^p$ and $p \in [1, \tfrac{d}{\alpha})$, through the inversion of Riesz potential operators, see~\cite[Theorem~16.5 and Section 17]{bib:r96} and~\cite[Theorems~3.22 and~3.29]{bib:s01}.

%
%

\section{Norm convergence}
\label{sec:norm}

In this section we collect other results which connect various definitions of $L f$ for $f$ in $\leb^p$, $\conto$ and $\contbu$. Combined with Lemma~\ref{lem:ndis}, they prove the first part of Theorem~\ref{th:main}.

\begin{leftbar}
\begin{lemma}
\label{lem:l2sq}
The conditions $f \in \dom(L_S, \leb^2)$ and $f \in \dom(L_Q, \leb^2)$ are equivalent, and $L_S f = L_Q f$.
\end{lemma}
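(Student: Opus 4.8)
The plan is to show that both $\dom(L_S,\leb^2)$ and $\dom(L_Q,\leb^2)$ coincide with the Sobolev space $H^\alpha=\{f\in\leb^2:|\xi|^{2\alpha}|\fourier f(\xi)|^2\text{ is integrable}\}$, and that on this common domain both operators act as the Fourier multiplier $f\mapsto\fourier^{-1}(-|\xi|^\alpha\fourier f)$ (so that they agree with $L_F$ of Section~\ref{sec:F}). Everything reduces to Plancherel's theorem together with a few applications of monotone and dominated convergence, using the pointwise facts about $p_t$ from~\eqref{eq:heatlim} and~\eqref{eq:ptest}.

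\textbf{Semigroup side.} Since $P_t$ is convolution with $p_t$ and $\fourier p_t(\xi)=e^{-t|\xi|^\alpha}$, Plancherel gives
\formula{
 \bigl\| \tfrac1t(P_t f-f)-h \bigr\|_2^2 = (2\pi)^{-d}\int_{\R^d}\bigl| \tfrac1t(e^{-t|\xi|^\alpha}-1)\fourier f(\xi)-\fourier h(\xi) \bigr|^2 d\xi .
}
Using the elementary bound $0\le t^{-1}(1-e^{-ts})\le s$ and the pointwise limit $t^{-1}(e^{-t|\xi|^\alpha}-1)\to-|\xi|^\alpha$, dominated convergence shows that if $f\in H^\alpha$ then $t^{-1}(P_tf-f)\to\fourier^{-1}(-|\xi|^\alpha\fourier f)$ in $\leb^2$; conversely, if the limit $h$ exists in $\leb^2$, passing to an almost everywhere convergent subsequence of Fourier transforms forces $\fourier h(\xi)=-|\xi|^\alpha\fourier f(\xi)$, hence $f\in H^\alpha$. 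Thus $\dom(L_S,\leb^2)=H^\alpha$ and $L_Sf=\fourier^{-1}(-|\xi|^\alpha\fourier f)$.

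\textbf{The form.} Put $\form_t(f,g)=t^{-1}\langle f-P_tf,g\rangle$ for $f,g\in\leb^2$. On one hand Plancherel gives $\form_t(f,g)=(2\pi)^{-d}\int t^{-1}(1-e^{-t|\xi|^\alpha})\fourier f\,\overline{\fourier g}\,d\xi$; since $t^{-1}(1-e^{-ts})\uparrow s$ as $t\downarrow0$, monotone convergence shows $\sup_{t>0}\form_t(f,f)<\infty$ iff $(2\pi)^{-d}\int|\xi|^\alpha|\fourier f|^2 d\xi<\infty$, i.e. $f\in H^{\alpha/2}$, and then dominated convergence gives $\form_t(f,g)\to(2\pi)^{-d}\int|\xi|^\alpha\fourier f\,\overline{\fourier g}\,d\xi$ for $f,g\in H^{\alpha/2}$. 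On the other hand, the Fubini computation of Section~\ref{sec:Q} with $\nu_r$ replaced by $p_t\in\leb^1$ gives
\formula{
 \form_t(f,g) = \frac{1}{2t}\int_{\R^d}\int_{\R^d}(f(y)-f(x))(\overline{g(y)}-\overline{g(x)})\,p_t(y-x)\,dx\,dy ;
}
combining this with $t^{-1}p_t(z)\to\nu(z)$ from~\eqref{eq:heatlim} and the uniform bound $t^{-1}p_t(z)\le C|z|^{-d-\alpha}$ (from~\eqref{eq:ptest} and the scaling~\eqref{eq:ptscaling}), Fatou's lemma and dominated convergence identify $\lim_t\form_t(f,f)$ with $\form(f,f)$ and, for $f,g\in H^{\alpha/2}$, $\lim_t\form_t(f,g)$ with $\form(f,g)$. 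Hence $\dom(\form)=H^{\alpha/2}$ and $\form(f,g)=(2\pi)^{-d}\int|\xi|^\alpha\fourier f\,\overline{\fourier g}\,d\xi$ for $f,g\in H^{\alpha/2}$. In particular, if $f\in H^\alpha\subseteq H^{\alpha/2}=\dom(\form)$, then $h:=\fourier^{-1}(-|\xi|^\alpha\fourier f)\in\leb^2$ satisfies $\langle h,g\rangle=-\form(f,g)$ for all $g\in\dom(\form)$, so $f\in\dom(L_Q,\leb^2)$ with $L_Qf=h=L_Sf$.

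\textbf{The reverse inclusion and conclusion.} It remains to show $\dom(L_Q,\leb^2)\subseteq H^\alpha$, which I expect to be the only real obstacle: a priori one only knows $f,L_Qf\in\leb^2$, with no control on the regularity of $f$, so one must feed the identity $\langle L_Qf,g\rangle=-\form(f,g)$ enough admissible test functions to recover $f\in H^\alpha$. The idea is to test against the truncations $g_n$ defined by $\fourier g_n(\xi)=|\xi|^\alpha\ind_{\{|\xi|\le n\}}(\xi)\fourier f(\xi)$, which lie in $H^{\alpha/2}=\dom(\form)$ because the multiplier $|\xi|^{\alpha/2}\cdot|\xi|^\alpha\ind_{\{|\xi|\le n\}}$ is bounded. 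Expanding both sides of $\langle L_Qf,g_n\rangle=-\form(f,g_n)$ by Plancherel yields
\formula{
 \int_{\{|\xi|\le n\}}|\xi|^{2\alpha}|\fourier f(\xi)|^2 d\xi = -\int_{\{|\xi|\le n\}}\fourier(L_Qf)(\xi)\,|\xi|^\alpha\,\overline{\fourier f(\xi)}\,d\xi ,
}
and Cauchy--Schwarz on the right-hand side gives $I_n\le\|\fourier(L_Qf)\|_2\sqrt{I_n}$, where $I_n=\int_{\{|\xi|\le n\}}|\xi|^{2\alpha}|\fourier f|^2<\infty$; hence $I_n\le\|\fourier(L_Qf)\|_2^2$ for every $n$, and letting $n\to\infty$ gives $f\in H^\alpha$. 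Finally, $L_Qf$ is unique because $\dom(\form)=H^{\alpha/2}\supseteq\schw$ is dense in $\leb^2$, so $L_Qf=\fourier^{-1}(-|\xi|^\alpha\fourier f)=L_Sf$, which completes the proof.
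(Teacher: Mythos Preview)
Your proof is correct and takes a genuinely different route from the paper's. The paper argues abstractly: it shows that $L_Q$ is an extension of $L_S$ (via the monotone limit $\form_t\uparrow\form$ and the $\leb^2$ convergence $t^{-1}(P_tf-f)\to L_Sf$), observes that $-L_Q$ is non-negative definite so that $\lambda I-L_Q$ is injective, and then invokes the maximality of $L_S$ exactly as in Lemmas~\ref{lem:nsf} and~\ref{lem:nsw}. You instead identify both $\dom(L_S,\leb^2)$ and $\dom(L_Q,\leb^2)$ explicitly with the Sobolev space $H^\alpha$ by Plancherel, and your truncation-plus-Cauchy--Schwarz argument for the inclusion $\dom(L_Q,\leb^2)\subseteq H^\alpha$ is a neat direct substitute for the paper's injectivity step. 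Your approach yields more concrete information (the domain is $H^\alpha$) and is self-contained; the paper's approach is shorter and fits its overall pattern of collapsing every candidate definition back onto $L_S$.

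One small point to tighten. In the reverse inclusion you write that ``a priori one only knows $f,L_Qf\in\leb^2$, with no control on the regularity of $f$'', and then expand $\form(f,g_n)$ by the Plancherel identity for $\form$. But that identity was only established for $f,g\in H^{\alpha/2}$. This is not a genuine gap: the very definition of $\dom(L_Q,\leb^2)$ requires $\form(f,g)$ to be well-defined for every $g\in\dom(\form)$, which in the standard reading (and in the paper's own proof, where $\langle -L_Qf,f\rangle=\form(f,f)\ge 0$ is used) carries the hypothesis $f\in\dom(\form)=H^{\alpha/2}$. So simply record $f\in H^{\alpha/2}$ at the start of that paragraph and the rest of your argument goes through verbatim.
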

\end{leftbar}

\begin{proof}
By monotone convergence theorem, the quadratic forms corresponding to $\tfrac{1}{t} (I - P_t)$, namely
\formula{
 \form_t(f, f) & = \int_{\R^d} \expr{\int_{\R^d} (f(x) - f(x + z)) \, \frac{p_t(z)}{t} \, dz} \overline{f(x)} dx \\
 & = \frac{1}{2} \int_{\R^d} \int_{\R^d} |f(y) - f(x)|^2 \, \frac{p_t(y - x)}{t} \, dy dx ,
}
increase to the quadratic form $\form$, defined by~\eqref{eq:form}. If $\tfrac{1}{t} (P_t f - f)$ converges in $\leb^2$ to $L_S f$ as $t \to 0^+$, then $\form_t(f, f)$ converges to $-\int_{\R^d} L_S f(x) \overline{f(x)} dx$, and so $f \in \dom(\form)$. Furthermore, for any $g \in \dom(\form)$ (in fact, for any $g \in \leb^2$), $\form_t(f, g)$ converges to $-\int_{\R^d} L_S f(x) \overline{g(x)} dx$, and it follows that $f \in \dom(L_Q, \leb^2)$.

As in the proof of Lemma~\ref{lem:nsf}, the operator $L_Q$ defined by~\eqref{eq:Q}, with domain $\dom(L_Q, \leb^2)$, is an extension of $L_S$ defined by~\eqref{eq:S}, with domain $\dom(L_S, \leb^2)$. Since $-L_Q$ is non-negative definite, $\lambda I - L_Q$ is injective for any $\lambda > 0$. It follows that $\dom(L_Q, \leb^2)$ is equal to $\dom(L_S, \leb^2)$.
\end{proof}

Lemma~\ref{lem:l2sq} is a special case of a general result in the theory of Dirichlet forms, see~\cite[Sections~1.3 and~1.4]{bib:fot11}.

\begin{leftbar}
\begin{lemma}
\label{lem:nsf}
Let $p \in [1,2]$. Then $f \in \dom(L_S, \leb^p)$ if and only if $f \in \dom(L_F, \leb^p)$, and $L_S f = L_F f$.
\end{lemma}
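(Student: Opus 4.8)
The plan is to establish the two inclusions $\dom(L_S, \leb^p) \subseteq \dom(L_F, \leb^p)$ and $\dom(L_F, \leb^p) \subseteq \dom(L_S, \leb^p)$, in both cases with $L_S f = L_F f$. The first is a direct Fourier-transform computation. The second I would not try to prove by inverting the Fourier transform, which is not bounded from $\leb^q$ to $\leb^p$ when $p < 2$; instead I would deduce it from the fact, recalled in Section~\ref{sec:S}, that $L_S$ admits no essential extension $\tilde L$ on $\leb^p$ for which $\lambda I - \tilde L$ is injective for some $\lambda > 0$.

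\emph{First inclusion.} Let $f \in \dom(L_S, \leb^p)$, so $\tfrac1t(P_t f - f) \to L_S f$ in $\leb^p$ as $t \to 0^+$. Since $\fourier(P_t f)(\xi) = e^{-t|\xi|^\alpha}\fourier f(\xi)$ and the Fourier transform is continuous from $\leb^p$ to $\leb^q$ with $\tfrac1p + \tfrac1q = 1$, it follows that $\tfrac1t(e^{-t|\xi|^\alpha} - 1)\fourier f(\xi) \to \fourier(L_S f)(\xi)$ in $\leb^q$. On the other hand, $\tfrac1t(e^{-t|\xi|^\alpha} - 1) \to -|\xi|^\alpha$ for every $\xi$, so passing to a sequence $t_n \to 0^+$ along which the $\leb^q$ convergence holds also pointwise a.e.\ identifies the two limits: $\fourier(L_S f)(\xi) = -|\xi|^\alpha\fourier f(\xi)$ for a.e.\ $\xi$, which is~\eqref{eq:F}. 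Hence $f \in \dom(L_F, \leb^p)$ and $L_F f = L_S f$; in particular $L_F$, with domain $\dom(L_F, \leb^p)$, is an extension of $L_S$ with domain $\dom(L_S, \leb^p)$.

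\emph{Second inclusion.} First, $\lambda I - L_F$ is injective for every $\lambda > 0$: if $f \in \dom(L_F, \leb^p)$ and $\lambda f = L_F f$, then $(\lambda + |\xi|^\alpha)\fourier f(\xi) = 0$ for a.e.\ $\xi$, so $\fourier f = 0$ and $f = 0$ because the Fourier transform is injective on $\schw' \supseteq \leb^p$. Given $f \in \dom(L_F, \leb^p)$, put $g = \lambda f - L_F f \in \leb^p$ and let $h = U_\lambda g \in \dom(L_S, \leb^p)$, where $U_\lambda$ is the resolvent operator of Section~\ref{sec:S}; then $\lambda h - L_S h = g$, and by the first inclusion also $\lambda h - L_F h = g$. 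Therefore $(\lambda I - L_F)(h - f) = 0$, whence $h = f$ by injectivity, so $f = h \in \dom(L_S, \leb^p)$ and $L_S f = L_S h = L_F h = L_F f$. (This is, in essence, the ``no essential extension'' principle recalled in Section~\ref{sec:S}, applied with $\tilde L = L_F$.)

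I expect the only genuinely delicate point to be the identification of limits in the first inclusion, where $\leb^q$-norm convergence of the transformed difference quotients must be matched against the elementary pointwise limit of the symbols $\tfrac1t(e^{-t|\xi|^\alpha} - 1)$; extracting an a.e.\ convergent subsequence handles this. The rest is formal, and the hypothesis $p \le 2$ enters only so that the Fourier transform is defined on $\leb^p$.
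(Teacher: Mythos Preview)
Your proof is correct and follows essentially the same route as the paper's: the first inclusion via continuity of the Fourier transform from $\leb^p$ to $\leb^q$ and identification of the $\leb^q$ limit with the pointwise limit of the symbols, and the second inclusion via injectivity of $\lambda I - L_F$ together with the ``no essential extension'' property of $L_S$. You spell out more details---the subsequence extraction for the a.e.\ identification, and the explicit resolvent argument $h = U_\lambda(\lambda f - L_F f)$---where the paper is terse, but the structure is identical.
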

\end{leftbar}

\begin{proof}
Let $\tfrac{1}{p} + \tfrac{1}{q} = 1$ and suppose that $f \in \dom(L_S, \leb^p)$. Then $\fourier (\tfrac{1}{t} (P_t f - f))(\xi) = \tfrac{1}{t} (e^{-t |\xi|^\alpha} - 1) \fourier f(\xi)$ converges as $t \to 0^+$ both in $\leb^q$ (because the Fourier transform is a bounded operator from $\leb^p$ to $\leb^q$) and pointwise. The two limit must coincide, that is, $\fourier (L_S f)(\xi) = |\xi|^\alpha \fourier f(\xi)$, as desired.

It follows that the fractional Laplace operator $L_F$ defined by~\eqref{eq:F}, with domain $\dom(L_F, \leb^p)$, is an extension of $L_S$ defined by~\eqref{eq:S}, with domain $\dom(L_S, \leb^p)$. Clearly, $\lambda I - L_F$ is injective (it is a Fourier multiplier with symbol $\lambda + |\xi|^\alpha$). It follows that $\dom(L_F, \leb^p) = \dom(L_S, \leb^p)$.
\end{proof}

Extension to $p \in (2, \infty)$ requires the distributional definition.

\begin{leftbar}
\begin{lemma}
\label{lem:nsw}
Let $\X$ be any of the spaces $\leb^p$, $p \in [1, \infty)$, $\conto$ or $\contbu$. Then $f \in \dom(L_S, \X)$ if and only if $f \in \dom(L_W, \X)$, and $L_S f = L_W f$.
\end{lemma}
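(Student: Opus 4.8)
The plan is to mimic the proofs of Lemmas~\ref{lem:nsf} and~\ref{lem:l2sq}: I will show that $L_W$, with domain $\dom(L_W, \X)$, is an extension of $L_S$, with domain $\dom(L_S, \X)$, and that $\lambda I - L_W$ is injective for some $\lambda > 0$; the ``no essential extension'' property recorded in Section~\ref{sec:S} then forces $\dom(L_W, \X) = \dom(L_S, \X)$ and $L_W = L_S$. Two ingredients are used repeatedly: the characterisation noted after the definition of $L_W$, namely that $f \in \dom(L_W, \X)$ if and only if $f \in \X$ and there is some $L_W f \in \X$ with $L_W f * \psi = (\tilde{L} * \psi) * f$ for every $\psi \in \schw$; and the facts from Section~\ref{sec:S} that, for the spaces $\X$ under consideration, $\lambda I - L_S$ is a bijection of $\dom(L_S, \X)$ onto $\X$ with inverse the convolution operator $U_\lambda f = u_\lambda * f$, and that $u_\lambda \in \leb^1$ by~\eqref{eq:potest}.

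Everything hinges on the elementary identity
\formula{
 \tilde{L} * \psi * u_\lambda & = \lambda\, u_\lambda * \psi - \psi , \qquad \psi \in \schw ,
}
which follows at once by taking Fourier transforms, both sides having transform $-|\xi|^\alpha \fourier\psi(\xi) / (\lambda + |\xi|^\alpha)$. Note that $\tilde{L} * \psi$ is bounded and continuous (its Fourier transform $-|\xi|^\alpha \fourier\psi(\xi)$ being integrable) and decays like $|z|^{-d-\alpha}$ at infinity (by a Taylor expansion of $\psi$, cf.\ the proof of Lemma~\ref{lem:smooth}), hence $\tilde{L} * \psi \in \leb^1$. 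Consequently every convolution appearing below involves an element of $\X$ convolved with $\leb^1$ functions only, so all the reassociations and commutations are justified by Young's inequality and Fubini's theorem.

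For the first inclusion, take $f \in \dom(L_S, \X)$, fix $\lambda > 0$ and set $g = \lambda f - L_S f \in \X$, so that $f = u_\lambda * g$. For any $\psi \in \schw$, inserting $f = u_\lambda * g$ into $(\tilde{L} * \psi) * f$ and using the displayed identity collapses this to $\lambda (u_\lambda * g) * \psi - g * \psi = (\lambda f - g) * \psi = (L_S f) * \psi$. By the above characterisation, $f \in \dom(L_W, \X)$ and $L_W f = L_S f$. For injectivity, suppose $f \in \dom(L_W, \X)$ with $L_W f = \lambda f$; then $\lambda f * \psi = (\tilde{L} * \psi) * f$ for all $\psi \in \schw$, and convolving with $u_\lambda$ and applying the displayed identity once more turns this into $\lambda (u_\lambda * f) * \psi = \lambda (u_\lambda * f) * \psi - f * \psi$, so $f * \psi = 0$ for all $\psi$, whence $f = 0$. (Alternatively, the same computation carried out with a general $g = \lambda f - L_W f$ yields $(u_\lambda * g) * \psi = f * \psi$ for all $\psi$; letting $\psi$ range over an approximate identity and passing to the limit in $\X$ --- which is legitimate on $\leb^p$, $p \in [1, \infty)$, $\conto$ and $\contbu$ --- gives $f = u_\lambda * g = U_\lambda g \in \dom(L_S, \X)$ directly, bypassing the appeal to the ``no essential extension'' property.)

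I expect the only point requiring real care to be the verification that $\tilde{L} * \psi \in \leb^1$, which is what makes all the triple convolutions reassociate freely; this I would obtain from the integrability and continuity of $-|\xi|^\alpha \fourier\psi(\xi)$ together with the pointwise decay of the singular-integral expression for $\tilde{L} * \psi$, exactly as in the proof of Lemma~\ref{lem:smooth}. The remainder is routine convolution algebra.
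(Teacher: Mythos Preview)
Your proof is correct and takes a genuinely different, in some ways cleaner, route from the paper's.

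The paper proves $\dom(L_S,\X)\subset\dom(L_W,\X)$ via the \emph{semigroup} approximation: it writes $\tilde{L}*\ph=\lim_{t\to0^+}\tfrac1t(p_t*\ph-\ph)$ in $\leb^1$, then shuffles this limit through a chain of five equalities to identify $(\tilde{L}*f)*(\ph*\psi)$ with $L_Sf*(\ph*\psi)$. You instead work with the \emph{resolvent}: the single algebraic identity $\tilde{L}*\psi*u_\lambda=\lambda\,u_\lambda*\psi-\psi$ (checked on the Fourier side) plus the representation $f=u_\lambda*g$ reduce $(\tilde{L}*\psi)*f$ to $L_Sf*\psi$ by pure $\leb^1$--$\X$ convolution algebra, with no limits in sight. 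For the converse inclusion the difference is sharper. The paper asserts injectivity of $\lambda I-L_W$ by saying that ``the Fourier transform of $L_Wf$ is $-|\xi|^\alpha\fourier f(\xi)$'', but since $\fourier f$ is only a tempered distribution this product needs justification, which the paper explicitly omits. Your argument sidesteps this entirely: convolving the defining relation with $u_\lambda$ and applying the same identity yields $f*\psi=(u_\lambda*g)*\psi$ for all $\psi\in\schw$, hence $f=U_\lambda g\in\dom(L_S,\X)$ directly. This is more elementary and more complete than the paper's treatment.

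Both arguments rely on $\tilde{L}*\psi\in\leb^1$; the paper invokes this without comment, while your justification (integrable Fourier transform gives boundedness, Taylor expansion gives $|x|^{-d-\alpha}$ decay) is correct and worth spelling out, since it is the one point where Schwartz regularity of $\psi$ is actually used.
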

\end{leftbar}

\begin{proof}
Suppose that $f \in \dom(L_S, \X)$ and let $\tilde{L}$ be the Schwartz distribution with Fourier transform $-|\xi|^\alpha$, as in~\eqref{eq:W}. We claim that if $\ph, \psi \in \schw$, then
\formula{
 (\tilde{L} * f) * (\ph * \psi) & = (\tilde{L} * \ph) * (f * \psi) \\
 & = \expr{\lim_{t \to 0^+} \tfrac{1}{t} (p_t * \ph - \ph)} * (f * \psi) \\
 & = \lim_{t \to 0^+} \expr{\tfrac{1}{t} (p_t * \ph - \ph) * (f * \psi)} \\
 & = \lim_{t \to 0^+} \expr{\tfrac{1}{t} (p_t * f - f) * (\ph * \psi)} \\
 & = \expr{\lim_{t \to 0^+} \tfrac{1}{t} (p_t * f - f)} * (\ph * \psi) .
}
Indeed, the first equality is the definition of the convolution of Schwartz distributions (and $\tilde{L}$ and $f$ are convolvable, because $\tilde{L} * \ph$ is integrable and $f * \psi$ is bounded). For the second one, observe that both $\tilde{L} * \ph$ and $\lim_{t \to 0^+} \tfrac{1}{t} (p_t * \ph - \ph)$ (the limit in $\leb^1$) have Fourier transforms $-|\xi|^\alpha \fourier \ph(\xi)$. To prove the third equality, note that $\tfrac{1}{t} (p_t * \ph - \ph)$ converges in $\leb^1$, and $f * \psi \in \leb^\infty$. The fourth equality follows by Fubini: $p_t, \ph, \psi \in \leb^1$ and $f \in \leb^1 + \leb^\infty$. Finally, the fifth one is a consequence of convergence of $\tfrac{1}{t} (p_t * f - f)$ in $\X$ and $\ph * \psi \in \leb^q$, where $\tfrac{1}{p} + \tfrac{1}{q} = 1$ (we take $p = \infty$ when $\X$ is $\conto$ or $\contbu$).

As in the proof of Lemma~\ref{lem:nsf}, the weak fractional Laplace operator $L_W$ defined by~\eqref{eq:W}, with domain $\dom(L_W, \X)$, is an extension of $L_S$ defined by~\eqref{eq:S}, with domain $\dom(L_S, \X)$. Since the Fourier transform of $L_W f$ is $-|\xi|^\alpha \fourier f(\xi)$ (note that the definition of multiplication here is not obvious, because $\fourier f$ is a distribution; we omit the details), $\lambda I - L_W$ is injective. Therefore, $\dom(L_W, \X) = \dom(L_S, \X)$.
\end{proof}

Lemma~\ref{lem:nsw} is rather well-known, as well as its extension to general translation-invariant generators of Markov semigroups (that is, generators of Lévy processes), see~\cite[Proposition~2.5]{bib:k11}. The first part of the argument, after obvious modification, gives the following result.

\begin{leftbar}
\begin{lemma}
\label{lem:nhw}
Let $\X$ be any of the spaces $\leb^p$, $p \in [1, \infty)$, $\conto$ or $\contbu$. Then $f \in \dom(L_H, \X)$ implies $f \in \dom(L_W, \X)$, and $L_H f = L_W f$.
\end{lemma}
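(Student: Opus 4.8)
The plan is to mimic the first part of the proof of Lemma~\ref{lem:nsw}, with the semigroup kernel $p_t$ replaced by the harmonic extension kernel $q_y$. Suppose $f \in \dom(L_H, \X)$, so that $\tfrac{1}{y}(q_y * f - f)$ converges in $\X$ to $L_H f$ as $y \to 0^+$. Let $\tilde{L}$ be the Schwartz distribution with Fourier transform $-|\xi|^\alpha$, as in~\eqref{eq:W}. I would show that for all $\ph, \psi \in \schw$,
\formula{
 (\tilde{L} * f) * (\ph * \psi) & = (\tilde{L} * \ph) * (f * \psi) \\
 & = \expr{\lim_{y \to 0^+} \tfrac{1}{y} (q_y * \ph - \ph)} * (f * \psi) \\
 & = \lim_{y \to 0^+} \expr{\tfrac{1}{y} (q_y * \ph - \ph) * (f * \psi)} \\
 & = \lim_{y \to 0^+} \expr{\tfrac{1}{y} (q_y * f - f) * (\ph * \psi)} \\
 & = \expr{\lim_{y \to 0^+} \tfrac{1}{y} (q_y * f - f)} * (\ph * \psi) ,
}
which is exactly~\eqref{eq:W:def} with $L_W f$ replaced by $L_H f$, hence $f \in \dom(L_W, \X)$ and $L_W f = L_H f$. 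The first equality is the definition of the convolution of Schwartz distributions, valid since $\tilde{L} * \ph$ is integrable and $f * \psi$ is bounded. The decisive ingredient is the second equality: one needs that $\tfrac{1}{y}(q_y * \ph - \ph)$ converges in $\leb^1$ to $\tilde{L} * \ph$, which follows from the fact that both functions have Fourier transform $-|\xi|^\alpha \fourier \ph(\xi)$ together with the pointwise smoothness of $\ph$ and the estimates on $q_y$ recorded in Section~\ref{sec:H} (a version of~\eqref{eq:heatlim} holds for $q_y$, and $\ph$ has second order derivatives, so Lemma~\ref{lem:smooth} or a direct Taylor argument gives $\leb^1$-convergence). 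The third equality uses that $\tfrac{1}{y}(q_y * \ph - \ph)$ converges in $\leb^1$ and $f * \psi \in \leb^\infty$; the fourth is Fubini ($q_y, \ph, \psi \in \leb^1$, $f \in \leb^1 + \leb^\infty$ since $\X$ is one of the listed spaces); the fifth uses convergence of $\tfrac{1}{y}(q_y * f - f)$ in $\X$ together with $\ph * \psi \in \leb^q$ for the appropriate conjugate exponent $q$ (taking $p = \infty$ when $\X$ is $\conto$ or $\contbu$).

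Unlike Lemma~\ref{lem:nsw}, I do not claim the reverse inclusion here, so no injectivity-of-$\lambda I - L_W$ argument is needed — the statement is only a one-way implication, matching the phrasing ``$f \in \dom(L_H,\X)$ implies $f \in \dom(L_W,\X)$''. (Indeed, the reverse would follow once equality with the semigroup domain is established, but that is handled elsewhere.)

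The main obstacle is the $\leb^1$-convergence $\tfrac{1}{y}(q_y * \ph - \ph) \to \tilde{L} * \ph$: one must control the difference quotient of the harmonic extension uniformly enough in $y$ to pass to the limit in $\leb^1$ norm rather than merely pointwise. This is where the explicit form~\eqref{eq:qy} of $q_y$ and the scaling $q_y(z) = y^{-d/\alpha} q_1(y^{-1/\alpha} z)$ together with the tail bound $q_1(z) \asymp \min(1, |z|^{-d-\alpha})$ do the work, exactly as the analogous facts about $p_t$ do in the proof of Lemma~\ref{lem:nsw}; since $\ph \in \schw$ this is routine, so in fact the whole proof is a transcription of the first half of the proof of Lemma~\ref{lem:nsw} with $p_t \rightsquigarrow q_y$, which is why the excerpt calls it ``obvious modification''.
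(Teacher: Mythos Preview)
Your proposal is correct and is precisely the approach the paper intends: the paper gives no separate proof of this lemma, stating only that ``the first part of the argument [of Lemma~\ref{lem:nsw}], after obvious modification, gives the following result,'' and your chain of equalities with $p_t$ replaced by $q_y$ is exactly that modification. You also correctly observe that only the one-way inclusion is asserted, so the injectivity step from the second half of Lemma~\ref{lem:nsw} is not needed here.
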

\end{leftbar}

Finally, we recall, without proof, two results. The first one is a special case of a general theorem in the theory of fractional powers of dissipative operators.

\begin{leftbar}
\begin{theorem}[{\cite[Theorems~6.1.3 and~6.1.6]{bib:ms01}}]
\label{th:nsb}
Let $\X$ be any of the spaces $\leb^p$, $p \in [1, \infty)$, $\conto$ or $\contbu$. Then the following conditions are equivalent: $f \in \dom(L_S, \X)$; $f \in \dom(L_B, \X)$; $f \in \dom(L_\st{B}, \X)$. Furthermore, $L_S f = L_B f = L_\st{B} f$.
\end{theorem}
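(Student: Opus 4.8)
We only sketch the argument, referring to~\cite[Chapter~6]{bib:ms01} for the details; see also~\cite[Chapter~13]{bib:ssv12}. The idea is to recognise all three operators as incarnations of the fractional power $-(-\Delta)^{\alpha/2}$ of the generator $\Delta$ of the heat semigroup $(e^{t\Delta})$, which on each of the spaces $\X$ considered here is a strongly continuous semigroup of contractions: \eqref{eq:B} is the Bochner--Phillips (subordination) representation of that fractional power, \eqref{eq:BB} is the Balakrishnan (resolvent) representation, and $L_S$ is the generator of the semigroup obtained from $(e^{t\Delta})$ by subordinating with the $\tfrac{\alpha}{2}$-stable subordinator.

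First I would identify the semigroups. Formula~\eqref{eq:ptbochner} says that $(P_t)$ is obtained from $(e^{t\Delta})$ by Bochner subordination with the $\tfrac{\alpha}{2}$-stable subordinator: $P_t f = \int_0^\infty e^{s\Delta} f\,\eta_t(s)\,ds$, where $\eta_t(s) = t^{-2/\alpha}\eta(t^{-2/\alpha}s)$ and $\eta$ is as in~\eqref{eq:ptbochner}. Since Bochner subordination preserves strong continuity and contractivity, $(P_t)$ is a strongly continuous contraction semigroup on $\X$ whose generator is $L_S$. By the classical theory of subordination this generator is exactly $-(-\Delta)^{\alpha/2}$ in the sense of~\eqref{eq:B}, so $L_S = L_B$ with equal domains; on $\dom(\Delta)$ (which contains $\schw$) the integral in~\eqref{eq:B} converges absolutely, since $\|e^{t\Delta}f - f\|_\X \le \min(t\|\Delta f\|_\X,\,2\|f\|_\X)$.

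Next I would match~\eqref{eq:B} with~\eqref{eq:BB}. For $f \in \dom(\Delta)$ one has the identity $\Delta(sI-\Delta)^{-1}f = s\int_0^\infty e^{-st}(e^{t\Delta}f - f)\,dt$ (the Laplace transform of $t \mapsto \partial_t e^{t\Delta}f$, integrated by parts), and therefore, by Fubini --- legitimate thanks to the two-sided bound above --- $\int_0^\infty \Delta(sI-\Delta)^{-1}f\,s^{\alpha/2-1}\,ds = \Gamma(\tfrac{\alpha}{2}+1)\int_0^\infty (e^{t\Delta}f - f)\,t^{-1-\alpha/2}\,dt$; multiplying by $\tfrac{\sin(\alpha\pi/2)}{\pi}$ and using $\Gamma(\tfrac{\alpha}{2})\,\Gamma(1-\tfrac{\alpha}{2}) = \pi/\sin\tfrac{\alpha\pi}{2}$ together with $\Gamma(1-\tfrac{\alpha}{2}) = \tfrac{\alpha}{2}\,|\Gamma(-\tfrac{\alpha}{2})|$ gives $L_\st{B}f = L_B f$ on $\dom(\Delta)$. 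Since $(e^{t\Delta})$ and $(P_t)$ commute, $\dom(\Delta)$ is invariant under $(P_t)$, hence a core for $L_S$; as $L_B$ and $L_\st{B}$ are closed operators extending their restrictions to $\dom(\Delta)$, they extend $L_S$. Finally $L_B$ and $L_\st{B}$ are dissipative (a standard property of these fractional-power constructions; on $\conto$ it is the positive maximum principle recalled in Section~\ref{sec:S}), so $\lambda I - L_B$ and $\lambda I - L_\st{B}$ are injective for $\lambda > 0$; combined with the surjectivity of $\lambda I - L_S$, whose inverse is the resolvent $U_\lambda$, this forces $\dom(L_B,\X) = \dom(L_\st{B},\X) = \dom(L_S,\X)$, and the operators coincide there.

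The main obstacle is the abstract input invoked in the second paragraph: that Bochner subordination produces a dissipative operator equal to the generator of the subordinate semigroup \emph{with the same domain}, and likewise for Balakrishnan's formula. This is the content of~\cite[Theorems~6.1.3 and~6.1.6]{bib:ms01}; the delicate point is the exact matching of domains --- in particular the inclusion $\dom(L_S,\X) \subseteq \dom(L_B,\X)$ --- for which~\eqref{eq:B} and~\eqref{eq:BB} must be read with the appropriate (if necessary, improper) interpretation of the defining integrals. The remaining steps are entirely elementary.
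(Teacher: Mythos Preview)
The paper does not actually prove this theorem: it is introduced with the sentence ``Finally, we recall, without proof, two results,'' and the statement is attributed directly to \cite[Theorems~6.1.3 and~6.1.6]{bib:ms01}. So there is no ``paper's own proof'' to compare against; the paper treats this as a black box from the abstract theory of fractional powers.

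Your sketch is a faithful outline of that abstract theory and is essentially correct. The identification of $(P_t)$ as the subordinate semigroup via~\eqref{eq:ptbochner}, the resolvent computation $\Delta(sI-\Delta)^{-1}f = s\int_0^\infty e^{-st}(e^{t\Delta}f-f)\,dt$, the Fubini step, and the constant-matching are all fine. The core argument (that $\dom(\Delta)$ is $P_t$-invariant, hence a core for $L_S$) is standard. You are right to flag, in your final paragraph, that the genuinely nontrivial points --- closedness and dissipativity of $L_B$ and $L_{\st{B}}$ as defined, and especially the inclusion $\dom(L_S,\X)\subseteq\dom(L_B,\X)$ --- are exactly what \cite{bib:ms01} supplies; without that input your Step~4 (``$L_B$ and $L_{\st{B}}$ are closed operators'') would be an assertion rather than a fact. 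Since both you and the paper ultimately defer to the same reference for these points, your proposal is in line with the paper's treatment, just more explicit about the skeleton of the argument.
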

\end{leftbar}

The other one is the inversion formula for the Riesz potential operators.

\begin{leftbar}
\begin{theorem}[{\cite[Theorem~3.22]{bib:s01}}]
\label{th:nri}
Let $p \in [1, \tfrac{d}{\alpha})$. Then $f \in \dom(L_R, \leb^p)$ if and only if $f \in \dom(L_I, \leb^p)$, and $L_R f = L_I f$.
\end{theorem}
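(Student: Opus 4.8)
The plan is to route everything through the semigroup $(P_t)$ and its resolvents rather than through Fourier multipliers, so that no restriction $p\le 2$ is needed. By Lemma~\ref{lem:ndis} we already have $\dom(L_I,\leb^p)=\dom(L_S,\leb^p)$ with $L_I f=L_S f$ there, so it suffices to prove that for $p\in[1,\tfrac d\alpha)$ one has $\dom(L_R,\leb^p)=\dom(L_S,\leb^p)$ and $L_R f=L_S f$ on the common domain. The one analytic fact that drives the proof is the identity $\int_0^\infty p_s(z)\,ds=c_{d,-\alpha}|z|^{-d+\alpha}$ from Section~\ref{sec:R}: it says that the Riesz potential $I_\alpha$ is the formal $0$-resolvent $\int_0^\infty P_s\,\cdot\,ds$. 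Combining it with $p_t*p_s=p_{t+s}$, with $u_\lambda=\int_0^\infty e^{-\lambda t}p_t\,dt$, and with the elementary $\int_0^\infty e^{-\lambda t}\bigl(\int_t^\infty h(u)\,du\bigr)dt=\tfrac1\lambda\int_0^\infty(1-e^{-\lambda u})h(u)\,du$, gives the resolvent relation $U_\lambda(I_\alpha g)=\tfrac1\lambda(I_\alpha g-U_\lambda g)$ for $g\in\leb^p$ and $\lambda>0$. All of this is carried out pointwise for almost every $x$: since $p<\tfrac d\alpha$, the potential $I_\alpha|g|$ is finite almost everywhere (it lies in $\leb^q$, or in weak $\leb^q$ when $p=1$, with $\tfrac1q=\tfrac1p-\tfrac\alpha d$), hence $\int_0^\infty|P_sg(x)|\,ds\le I_\alpha|g|(x)<\infty$ and $\int_0^\infty P_sg(x)\,ds=I_\alpha g(x)$ a.e., which legitimises every interchange of integration used above.

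For the inclusion $\dom(L_R,\leb^p)\subseteq\dom(L_S,\leb^p)$, take $f\in\dom(L_R,\leb^p)$, so $f=I_\alpha g$ with $f,g\in\leb^p$. Fix any $\lambda>0$; the resolvent relation rearranges to $f=I_\alpha g=U_\lambda(\lambda f+g)$ as elements of $\leb^p$. By the general semigroup theory recalled in Section~\ref{sec:S}, $U_\lambda$ maps $\leb^p$ bijectively onto $\dom(L_S,\leb^p)$ and $(\lambda I-L_S)U_\lambda=I$. Hence $f\in\dom(L_S,\leb^p)$ and $(\lambda I-L_S)f=\lambda f+g$, i.e.\ $L_S f=-g$. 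Since also $L_R f=-g$, and $L_S f=L_I f$ by Lemma~\ref{lem:ndis}, all three operators agree on $f$.

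For the reverse inclusion, let $f\in\dom(L_S,\leb^p)$ and set $g=-L_S f\in\leb^p$. From the standard identity $\int_0^t P_sL_Sf\,ds=P_tf-f$ one obtains, pointwise for almost every $x$ and every $t>0$, $\int_0^t P_sg(x)\,ds=f(x)-P_tf(x)$. Let $t\to\infty$. The left-hand side converges to $\int_0^\infty P_sg(x)\,ds=I_\alpha g(x)$ by the absolute convergence noted above, while $P_tf\to 0$ uniformly, because by~\eqref{eq:ptest} one has $p_1\in\leb^{p'}$ (with $p'$ conjugate to $p$), and therefore $\|P_tf\|_\infty\le\|f\|_p\|p_t\|_{p'}=\|f\|_p\,t^{-d/(\alpha p)}\|p_1\|_{p'}\to 0$. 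Hence $I_\alpha g(x)=f(x)$ for almost every $x$, so $f=I_\alpha g$ with $g\in\leb^p$; that is, $f\in\dom(L_R,\leb^p)$ and $L_R f=-g=L_S f$.

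The routine part is the two limits as $t\to\infty$ and the bookkeeping that makes the Fubini steps valid for almost every $x$. The genuine subtlety — the point I expect to be the main obstacle — is that $\int_0^\infty P_s g\,ds$ does \emph{not} converge in $\leb^p$ norm (its $\leb^p$ norm stays comparable to $\|g\|_p$ as the upper limit grows), so one cannot apply closedness of $L_S$ directly to the defining integral for $I_\alpha g$; this is circumvented precisely by passing first to the pointwise a.e.\ statements for $I_\alpha g$ and then re-entering $\dom(L_S,\leb^p)$ through the honest $\lambda$-resolvent $U_\lambda$, which is where the resolvent relation $U_\lambda(I_\alpha g)=\tfrac1\lambda(I_\alpha g-U_\lambda g)$ earns its keep. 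The endpoint $p=1$ needs only a passing remark: there $I_\alpha$ maps $\leb^1$ into weak $\leb^{d/(d-\alpha)}$ rather than $\leb^{d/(d-\alpha)}$, but since the only property used is that $I_\alpha|g|$ is finite almost everywhere, the argument is unchanged.
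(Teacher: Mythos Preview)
The paper does not actually prove Theorem~\ref{th:nri}: it is quoted verbatim from Samko's book \cite[Theorem~3.22]{bib:s01} and stated without proof, immediately after Theorem~\ref{th:nsb} (which is likewise only cited). So there is no in-paper argument to compare against; what the paper does contain is the informal discussion in Section~\ref{sec:R}, where the identities $U_0 L_S f=-f$ and $L_S I_\alpha g=-g$ are sketched under the standing hypothesis that the relevant semigroup integrals converge in norm.

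Your argument is correct and is in fact a clean, self-contained proof built from the ingredients already assembled in the paper. The main difference from the heuristic in Section~\ref{sec:R} is precisely the point you flag as the subtlety: the norm integral $\int_0^\infty P_s g\,ds$ need not converge in~$\leb^p$, so one cannot invoke closedness of $L_S$ on it directly. Your workaround---establish $U_\lambda(I_\alpha g)=\tfrac1\lambda(I_\alpha g-U_\lambda g)$ pointwise a.e.\ via Fubini (legitimate because $I_\alpha|g|<\infty$ a.e.), then observe that all three terms lie in $\leb^p$ once $f=I_\alpha g\in\leb^p$ is assumed, and finally read off $f=U_\lambda(\lambda f+g)$ as an honest $\leb^p$ identity---is exactly the right repair, and it buys you the full range $p\in[1,\tfrac d\alpha)$ without Fourier arguments. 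The reverse inclusion, via $\int_0^t P_s g\,ds=f-P_t f$ and $\|P_t f\|_\infty\le t^{-d/(\alpha p)}\|p_1\|_{p'}\|f\|_p\to 0$, is straightforward; the only bookkeeping you gloss over is upgrading the $\leb^p$ identity to an a.e.\ identity valid simultaneously for all $t$, which follows from continuity of both sides in $t$ for a.e.~$x$. The $p=1$ endpoint remark is adequate: $I_\alpha|g|\in\leb^1+\leb^\infty$ is already enough for every Fubini step, and weak-$\leb^q$ is not actually needed.
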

\end{leftbar}

Partial (distributional) extensions of the above result to $\leb^p$ for $p \in [\tfrac{d}{\alpha}, \infty)$ can be found in~\cite[Section~7.1]{bib:s01}. It is also of interest to study $f \in \leb^p$ for which $L f \in \leb^q$ with different $p$ and $q$, see~\cite[Sections~7.1, 7.3 and~7.4]{bib:s01}.

The results of this section, together with Lemma~\ref{lem:ndis}, prove the first part of Theorem~\ref{th:main}, which we state more formally below.

\begin{leftbar}
\begin{theorem}
\label{th:main1}
Let $\X$ be any of the spaces $\leb^p$, $p \in [1, \infty)$, $\conto$ or $\contbu$. Then the following conditions are equivalent:
\smallskip
{
\setlength{\multicolsep}{0em}
\setlength{\columnsep}{0.5em}
\begin{multicols}{2}
\begin{itemize}[leftmargin=1.5em]
\item $f \in \dom(L_F, \X)$ ($\X = \leb^p$, $p \in [1, 2]$);
\item $f \in \dom(L_W, \X)$;
\item $f \in \dom(L_B, \X)$;
\item $f \in \dom(L_\st{B}, \X)$;
\item $f \in \dom(L_I, \X)$;
\item $f \in \dom(L_D, \X)$;
\item $f \in \dom(L_Q, \X)$ ($\X = \leb^2$);
\item $f \in \dom(L_S, \X)$;
\item $f \in \dom(L_R, \X)$ ($\X = \leb^p$, $p \in [1, \tfrac{d}{\alpha})$);
\item $f \in \dom(L_H, \X)$.
\end{itemize}
\end{multicols}
}
\medskip\noindent
In addition, if $f \in \cont^1$ and $f \in \dom(L_\st{I}, \X)$, or if $f \in \dom(L_\stt{I}, \X)$, then $f$ satisfies all of the above conditions. Finally, all corresponding definitions of $L f$ agree.
\end{theorem}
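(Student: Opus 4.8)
The plan is to use the semigroup definition $\dom(L_S,\X)$ as a hub and to reduce every entry of the list to it, reading off the coincidence of the values $L_\bullet f=L_S f$ by transitivity along the way. Three equivalences are quoted verbatim: $\dom(L_W,\X)=\dom(L_S,\X)$ from Lemma~\ref{lem:nsw}, $\dom(L_B,\X)=\dom(L_\st{B},\X)=\dom(L_S,\X)$ from Theorem~\ref{th:nsb}, and $\dom(L_I,\X)=\dom(L_D,\X)=\dom(L_S,\X)$ from Lemma~\ref{lem:ndis}, each together with the corresponding identity of values. The three entries that appear only for a restricted class of $\X$ are then disposed of one by one: $\dom(L_F,\leb^p)=\dom(L_S,\leb^p)$ for $p\in[1,2]$ by Lemma~\ref{lem:nsf}, $\dom(L_Q,\leb^2)=\dom(L_S,\leb^2)$ by Lemma~\ref{lem:l2sq}, and $\dom(L_R,\leb^p)=\dom(L_I,\leb^p)=\dom(L_S,\leb^p)$ for $p\in[1,\tfrac d\alpha)$ by Theorem~\ref{th:nri} combined with the preceding step.

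The only entry of the main list not covered by a single citation is the harmonic-extension domain, which I would pin down by a two-sided squeeze. Lemma~\ref{lem:nhw} gives the inclusion $\dom(L_H,\X)\subseteq\dom(L_W,\X)$ with $L_H f=L_W f$, while Lemma~\ref{lem:norm} gives the opposite-direction inclusion $\dom(L_I,\X)\subseteq\dom(L_H,\X)$ with $L_I f=L_H f$. Since $\dom(L_W,\X)=\dom(L_I,\X)=\dom(L_S,\X)$ by the previous paragraph, the space $\dom(L_H,\X)$ is trapped between two copies of $\dom(L_S,\X)$, so it equals $\dom(L_S,\X)$ and all values agree. This settles the equivalence of all ten definitions on the relevant spaces, and transitivity then gives that the corresponding expressions for $L f$ coincide.

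For the additional clause it suffices, by what has just been proved, to place $f$ in $\dom(L_S,\X)$ with $L_S f$ equal to the prescribed value. I would argue at the pointwise level first: Lemma~\ref{lem:ppip} shows that at every point where the relevant integral converges absolutely one has $f\in\dom(L_I,x)$ with $L_I f(x)$ equal to $L_\st{I}f(x)$, resp.\ $L_\stt{I}f(x)$ (for $L_\st{I}$ the $\cont^1$ hypothesis ensures that $\nabla f(x)$ exists everywhere, so that Lemma~\ref{lem:ppip} applies throughout). As $f\in\X$ is locally integrable and, since $f\in\X$ forces $(1+|z|)^{-d-\alpha}f(x+z)$ to be integrable, the heat-kernel convolution makes sense, Lemma~\ref{lem:pis} upgrades this to $f\in\dom(L_S,x)$ with the same value; thus $\tfrac1t(P_tf(x)-f(x))$ converges, for almost every $x$ (every $x$ when $\X=\conto$ or $\contbu$), to a function $g\in\X$. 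The remaining task is to promote this pointwise convergence to convergence in $\X$. The cleanest route I see is to check directly that $f\in\dom(L_W,\X)$ with $L_W f=g$: for $\psi\in\schw$, a Fubini exchange gives $g*\psi=L_\stt{I}(f*\psi)$ (resp.\ $L_\st{I}(f*\psi)$); since $f*\psi$ is smooth, bounded and has bounded derivatives, the right-hand side equals the elementary fractional Laplacian of $f*\psi$, i.e.\ $\tilde L*(f*\psi)$, and unwinding the definition of $L_W$ identifies $g$ with $\tilde L*f$, whence Lemma~\ref{lem:nsw} applies. An alternative is to feed the pointwise convergence into the resolvent identity $U_\lambda(\lambda f-g)\in\dom(L_S,\X)$ and show that this element must be $f$, using that $L_S$ admits no proper extension with $\lambda I-\tilde L$ injective (for $\conto$, $\contbu$ one may use the positive-maximum-principle form instead).

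The step I expect to be the main obstacle is exactly this last passage from pointwise to norm convergence. The Fubini exchange above --- equivalently, the dominated-convergence argument that would force $\tfrac1t(P_tf-f)\to g$ in $\X$ --- is legitimate only once one knows that the absolutely convergent majorant $x\mapsto\int_{\R^d}|f(x+z)+f(x-z)-2f(x)|\,\nu(z)\,dz$, finite almost everywhere by hypothesis, is in fact controlled in $\X$ (or at least locally integrable). This is the content that must be extracted from, or incorporated into, the definitions of $\dom(L_\stt{I},\X)$ and $\dom(L_\st{I},\X)$; once such a majorant is at hand, the surviving estimates are of the same routine type already used in the proofs of Lemmas~\ref{lem:pis}, \ref{lem:norm} and~\ref{lem:dynkin}. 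For $\X=\conto$ or $\contbu$ the counterpart of this difficulty is the promised upgrade of pointwise convergence to uniform convergence, which I would obtain by the positive-maximum-principle argument indicated above.
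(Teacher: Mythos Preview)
Your treatment of the ten main equivalences is exactly the paper's: use $\dom(L_S,\X)$ as the hub, invoke Lemmas~\ref{lem:ndis}, \ref{lem:l2sq}, \ref{lem:nsf}, \ref{lem:nsw} and Theorems~\ref{th:nsb}, \ref{th:nri} for the direct identifications, and sandwich $\dom(L_H,\X)$ between $\dom(L_I,\X)$ and $\dom(L_W,\X)$ via Lemmas~\ref{lem:norm} and~\ref{lem:nhw}. The paper's ``proof'' is literally a one-line reference to the results of Section~\ref{sec:norm} together with Lemma~\ref{lem:ndis}, so you have reconstructed it faithfully.

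On the additional clause concerning $L_{\st{I}}$ and $L_{\stt{I}}$ you are in fact more careful than the paper, which does not spell out an argument at all (and, as you implicitly notice, never formally defines $\dom(L_{\st{I}},\X)$ or $\dom(L_{\stt{I}},\X)$). The obstacle you flag --- passing from pointwise to norm convergence --- is not a real gap once one adopts the natural reading of these domains in parallel with the paper's definition of $\dom(L_B,\X)$: namely, $f\in\X$ and the integral in~\eqref{eq:III} (resp.~\eqref{eq:II}) exists as an $\X$-valued Bochner integral, i.e.\ $\|f(\cdot+z)+f(\cdot-z)-2f(\cdot)\|_\X\,\nu(z)$ is integrable. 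Under that reading the majorant you ask for is part of the hypothesis, and your Fubini route to $\dom(L_W,\X)$ (or, more directly, a dominated-convergence argument placing $f$ in $\dom(L_I,\X)$) goes through immediately. So the ``main obstacle'' you anticipate is a definitional ambiguity in the paper rather than a missing idea in your proof.
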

\end{leftbar}

%
%

\section{Further results}
\label{sec:other}

In this section we collect results which relate pointwise and norm convergence in various definitions of $L$ on $\leb^p$, $\conto$ and $\contbu$.

It is a standard result that if $f \in \conto$, the limit in~\eqref{eq:S} exists for all $x$, and $L_S f(x)$ (defined pointwise) is a $\conto$ function, then in fact the limit in~\eqref{eq:S} is uniform; in other words, if $f \in \dom(L_S, x)$ for all $x$ and $L_S f(x)$ is in $\conto$, then $f \in \dom(L_S; \conto)$. Indeed, the operator $\tilde{L}$ defined by~\eqref{eq:S} for those $f \in \conto$, for which a pointwise limit in~\eqref{eq:S} exists for all $x$ and defines a $\conto$ function, is an extension of $L$ (with domain $\dom(L_S, \conto)$) which satisfies the positive maximum principle, and hence it is equal to $L$. Exactly the same argument, based on Proposition~\ref{prop:pmp}, proves the corresponding result for $\contbu$.

By Theorem~\ref{th:main1}, the above argument extends to pointwise convergence in other definitions as well, thus proving the next part of Theorem~\ref{th:main}, where everywhere pointwise convergence is discussed. For clarity, the result is formally stated below.

\begin{leftbar}
\begin{theorem}
\label{th:main2}
Let $\X$ be eather $\conto$ or $\contbu$. Then each of the equivalent conditions of Theorem~\ref{th:main1} is equivalent to each of the following conditions:
\begin{itemize}[leftmargin=1.5em]
\item $f \in \X$, $f \in \dom(L_B, x)$ for all $x$ and $L_B f \in \X$;
\item $f \in \X$, $f \in \dom(L_I, x)$ for all $x$ and $L_I f \in \X$;
\item $f \in \X$, $f \in \dom(L_D, x)$ for all $x$ and $L_D f \in \X$;
\item $f \in \X$, $f \in \dom(L_S, x)$ for all $x$ and $L_S f \in \X$;
\item $f \in \X$, $f \in \dom(L_H, x)$ for all $x$ and $L_H f \in \X$.
\end{itemize}
Furthermore, all corresponding definitions of $L f$ agree.
\end{theorem}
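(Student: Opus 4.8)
The plan is to show that each of the five listed conditions is equivalent to $f \in \dom(L_S, \X)$, one of the conditions already proved equivalent in Theorem~\ref{th:main1}; the agreement of all definitions of $L f$ will come out of the argument as well. One direction is immediate: if $f \in \dom(L_S, \X)$, then by Theorem~\ref{th:main1} also $f \in \dom(L_B, \X) \cap \dom(L_I, \X) \cap \dom(L_D, \X) \cap \dom(L_H, \X)$, and since norm convergence in $\conto$ or $\contbu$ \emph{is} uniform convergence, it implies pointwise convergence to the same limit; for the Bochner integral defining $L_B f$ one uses in addition that point evaluation is a bounded linear functional on $\X$, hence commutes with the integral. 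So $f$ satisfies all five conditions with the pointwise limits equal to $L_S f$. The substance is the converse implications, and I would reduce all of them to the semigroup case.

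For the semigroup condition the argument is the one sketched just before the theorem. Let $\widehat{L}$ be the operator on $\X$ whose domain consists of those $f \in \X$ for which the pointwise limit in~\eqref{eq:S} exists for every $x \in \R^d$ and defines a function in $\X$; it is linear and single valued, and by the paragraph above it is an extension of $L_S$ (with domain $\dom(L_S, \X)$). Since $p_t(z)\,dz$ is a probability measure, $P_t f(x_0) \le f(x_0)$ at any point $x_0$ where a real $f$ in the domain of $\widehat{L}$ attains its maximum (and the analogous inequality for the real part holds in the complex-valued case, after the usual rotation), so that $\widehat{L} f(x_0) \le 0$ in the limit $t \to 0^+$; thus $\widehat{L}$ satisfies the positive maximum principle. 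By Proposition~\ref{prop:pmp} when $\X = \contbu$, and by the corresponding property of $L_S$ on $\conto$ recorded in Section~\ref{sec:S}, we conclude $\widehat{L} = L_S$, so the semigroup condition is equivalent to $f \in \dom(L_S, \X)$. The harmonic extension condition is handled by an identical argument: $q_y(z)\,dz$ is also a probability measure (indeed $\fourier q_y(0) = \ph_0(y) = 1$), so the operator defined on $\X$ by the pointwise limit in~\eqref{eq:H} is an extension of $L_S$ satisfying the positive maximum principle, hence equals $L_S$.

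For the three remaining conditions I would reduce, pointwise, to the semigroup case already treated. If $f \in \dom(L_D, x)$ for all $x$ with $L_D f \in \X$, then Lemma~\ref{lem:dip} gives $f \in \dom(L_I, x)$ for all $x$ and $L_I f = L_D f \in \X$. If $f \in \dom(L_I, x)$ for all $x$ with $L_I f \in \X$, then $f$ is continuous, hence locally integrable near every point, so Lemma~\ref{lem:pis} gives $f \in \dom(L_S, x)$ for all $x$ and $L_S f = L_I f \in \X$. If $f \in \dom(L_B, x)$ for all $x$ with $L_B f \in \X$, then $f \in \X$ is bounded, so $(1 + |z|)^{-d - \alpha} f(z)$ is integrable, and Lemma~\ref{lem:bis} gives $f \in \dom(L_S, x)$ for all $x$ and $L_S f = L_B f \in \X$. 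In every case we arrive at the semigroup condition, hence at $f \in \dom(L_S, \X)$, and all the pointwise limits coincide with $L_S f$; combined with Theorem~\ref{th:main1} this yields equivalence with every condition listed there and the agreement of all definitions of $L f$.

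I do not expect a genuine obstacle. The only points that need care are the verification that each pointwise-defined operator really extends the norm-defined $L_S$, which rests only on ``uniform convergence implies pointwise convergence'', and the observation that---unlike \eqref{eq:I}, \eqref{eq:D}, \eqref{eq:B}---there is no pointwise reduction of \eqref{eq:H} to a simpler pointwise definition among Lemmas~\ref{lem:dip}--\ref{lem:bis}, so that \eqref{eq:H} must be handled by the direct positive-maximum-principle argument through Proposition~\ref{prop:pmp}.
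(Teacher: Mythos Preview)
Your proposal is correct and follows essentially the paper's approach: the key step in both is the positive-maximum-principle argument (Proposition~\ref{prop:pmp}) showing that the pointwise $L_S$ operator on $\X$ cannot properly extend the norm-defined one. The only minor difference is that the paper, after treating $L_S$, dispatches the remaining four cases in one sentence by noting that, via Theorem~\ref{th:main1} and ``uniform $\Rightarrow$ pointwise'', each pointwise operator $L_B, L_I, L_D, L_H$ is itself an extension of norm $L_S$ satisfying the positive maximum principle (each kernel $k_t, \nu_r, \tilde{\nu}_r, q_y$ being nonnegative and of unit or positive mass), so the same conclusion applies directly; you instead reduce $L_D, L_I, L_B$ to the pointwise $L_S$ case through Lemmas~\ref{lem:dip}, \ref{lem:pis}, \ref{lem:bis} before invoking the maximum principle once, and handle $L_H$ directly. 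Both routes are equally short and valid.
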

\end{leftbar}

The following example shows that continuity of $L f$ is essential. 

\begin{example}
Let $g(x) = e^{-|x|^2} x_1 / |x|$, where $x = (x_1, x_2, ..., x_d)$. Furthermore, let $\lambda > 0$ and $f = U_\lambda g = u_\lambda * g$, where $U_\lambda$ is the $\lambda$-resolvent operator and $u_\lambda$ is its kernel function (see Section~\ref{sec:S}). Then $f \in \conto$ and $f \in \dom(L_S, \leb^1)$. Furthermore, $f$ is smooth except at~$0$, and so $L_S f$ is defined both as a limit in $\leb^1$ and pointwise for $x \in \R^d \setminus \{0\}$. These two limits coincide almost everywhere, and, by continuity of $g$, everywhere in $\R^d \setminus \{0\}$. It follows that with $L_S f(x)$ defined pointwise,
\formula{
 L_S f(x) & = \lambda U_\lambda g(x) - (\lambda I - L_S) U_\lambda g(x) = \lambda f(x) - g(x)
}
for all $x \in \R^d \setminus \{0\}$. Since $f(-x) = -f(x)$, we also have $L_S f(0) = 0$. It follows that $f \in \conto$ and $f \in \dom(L_S, x)$ for all $x$, but $L_S f \notin \conto$, and therefore $f \notin \dom(L_S, \conto)$.
\end{example}

Boundedness of $f$ is essential when $\alpha \in (1, 2)$: clearly, any linear function $f$ satisfies then $L_S f(x) = 0$ for all $x$. When $\alpha \in (1, 2)$, also continuity of $f$ is essential, as indicated by the following surprising example. 

\begin{example}
Let $f(x) = |x_1|^{\alpha - 2} \sign x_1$, where $x = (x_1, x_2, ..., x_d)$. Then $f$ is locally integrable, and we claim that $L_D f(x) = 0$ for all $x$. Indeed, since $f(x + z) = -f(x - z)$ when $x_1 = 0$, we have $L_D f(x) = 0$ when $x_1 = 0$. Furthermore, $g(x) = |x_1|^{\alpha - 1}$ is known to satisfy $L_D g(x) = 0$ when $x_1 \ne 0$, and $f(x) = \tfrac{\partial}{\partial x_d} g(x)$. Our claim follows by an appropriate application of dominated convergence, we omit the details. In particular, $L_S f(x) = 0$ for all $x$. However, $f$ is not continuous, and clearly $f \notin \dom(L_S, \contbu)$.
\end{example}

Apparently, the above example in fact describes the worst case: if $f$ is more regular than above, we conjecture that in fact $f$ is continuous.

\begin{leftbar}
\begin{conjecture}
\label{con:sing}
Suppose that any of the following conditions is satisfied:
\begin{itemize}
\item $\alpha \in (0, 1]$ and $f$ is locally integrable;
\item $\alpha \in (1, 2)$, $q = \tfrac{1}{2 - \alpha}$ and $|f|^q$ is locally uniformly integrable;
\item $\alpha \in (0, 2]$, $f$ is locally integrable and $f \ge 0$.
\end{itemize}
If $f \in \dom(L_S, x)$ for all $x \in \R^d$ and $L_S f(x)$, defined pointwise by~\eqref{eq:S}, is a bounded uniformly continuous function, then $f$ is uniformly continuous the convergence in~\eqref{eq:S} is uniform. Furthermore, there is a linear function $g$ such that $f - g \in \dom(L_S, \contbu)$.
\end{conjecture}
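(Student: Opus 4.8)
The plan is to infer from the pointwise existence of $L_S f$ an at-most-affine growth bound on $f$, to represent $f$ locally through M.~Riesz's Poisson and Green kernels of a ball in terms of the (bounded, uniformly continuous) function $L_S f$ and the far values of $f$, to read off uniform continuity of $f$, and finally — stripping off an affine part when $\alpha\in(1,2)$ — to upgrade pointwise convergence in~\eqref{eq:S} to uniform convergence via the positive maximum principle. For the first step note that, since $L_S f(x_0)$ exists for at least one $x_0$, the convolution $P_t f(x_0)=\int_{\R^d}f(x_0+z)p_t(z)\,dz$ is an absolutely convergent Lebesgue integral for all small $t>0$; as $p_t(z)\ge c(d,\alpha)\,t\,(1+|z|)^{-d-\alpha}$ for $|z|\ge1$ (by~\eqref{eq:heatlim} and~\eqref{eq:ptest}), this forces $\int_{\R^d}(1+|z|)^{-d-\alpha}|f(z)|\,dz<\infty$. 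In particular $f$ grows at most affinely, every $\nu$-type pointwise expression for $Lf(x)$ is meaningful, and $\int_{\R^d\setminus\overline{B}_r}f(x+z)\pi_r(0,z)\,dz$ converges for all $r>0$.

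The core step is to establish, for every $x$ and every $r>0$, the $y=0$ instance of~\eqref{eq:dynkinb},
\formula{
 f(x)&=-\int_{B_r}L_S f(x+z)\,\gamma_r(0,z)\,dz+\int_{\R^d\setminus\overline{B}_r}f(x+z)\,\pi_r(0,z)\,dz ,
}
for the present, merely locally integrable, $f$. The natural route is Dynkin's stopping-time formula for the isotropic $\alpha$-stable process $X$ killed on exiting $B_r(x)$: the right-hand side equals $-\ex^x\!\int_0^\tau L_S f(X_s)\,ds$, finite because $L_S f$ is bounded and $\ex^x\tau=\int_{B_r}\gamma_r(0,z)\,dz<\infty$, while the left-hand side minus $f(x)$ equals $\ex^x[f(X_\tau)]-f(x)$ (alternatively, one may mollify $f$, apply Lemma~\ref{lem:smooth} using the growth bound, and pass to the limit). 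Here the integrability hypotheses — and positivity in the third case — are exactly what makes the pointwise-defined $L_S f$ genuinely act as the generator of $X$ and keeps $\ex^x[f(X_\tau)]$ finite; the boundary singularity $(|z|^2-r^2)^{-\alpha/2}$ of the harmonic measure $\pi_r(0,\cdot)$ dictates the threshold $q=\tfrac1{2-\alpha}$ when $\alpha\in(1,2)$, which is precisely the exponent ruling out the borderline example $f(x)=|x_1|^{\alpha-2}\sign x_1$ (for which $L_D f\equiv0$ although $f$ is discontinuous). It is then convenient to average the identity over $r\in(\rho,2\rho)$: the kernel $\bar\pi:=\int_\rho^{2\rho}\pi_r(0,\cdot)\,dr$ lies in $\conto\cap\leb^1$ and decays like $(1+|z|)^{-d-\alpha}$, which removes the boundary singularity altogether.

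With the representation in hand, the first term is the convolution of the bounded function $L_S f$ with the integrable kernel $\gamma_r(0,\cdot)$, hence bounded and uniformly continuous; after averaging in $r$, the second term is the convolution of $f$ — locally $\leb^1$ and globally integrable against $(1+|z|)^{-d-\alpha}$ — with $\bar\pi\in\conto\cap\leb^1$, hence continuous and, splitting $f=f\ind_{B_R}+f\ind_{\R^d\setminus B_R}$ and using uniform continuity of $\bar\pi$ together with the decay of $\nabla\bar\pi$ at infinity, uniformly continuous up to an affine summand. Thus $f$ is continuous. When $\alpha\in(0,1]$ the weighted-$\leb^1$ bound admits no affine growth beyond a constant, which is absorbed, so $f\in\contbu$ and one takes $g=0$. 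When $\alpha\in(1,2)$, the representation with general $y\in B_r$ shows that on each ball $f$ equals an $\alpha$-harmonic function plus a term bounded by $C\|L_S f\|_\infty r^\alpha$; combined with the at-most-affine growth, a Liouville-type argument (iterating~\eqref{eq:dynkinb}) produces an affine function $g$ — automatically $\alpha$-harmonic, so $L_S g=0$ — with $f-g\in\contbu$.

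In both cases $f-g\in\contbu$, $L_S(f-g)=L_S f\in\contbu$, and $f-g\in\dom(L_S,x)$ for all $x$, so Theorem~\ref{th:main2} — equivalently, the positive-maximum-principle argument behind Proposition~\ref{prop:pmp} — yields $f-g\in\dom(L_S,\contbu)$; hence the convergence in~\eqref{eq:S} for $f-g$, and therefore for $f$, is uniform. The main obstacle is the core step: making the ball representation~\eqref{eq:dynkinb} rigorous for merely locally integrable (or locally $\leb^{1/(2-\alpha)}$, or nonnegative) $f$ — that is, showing that the pointwise limit~\eqref{eq:S} really exhibits $f$ as acted on by the generator — and carrying it out with all constants uniform in $x$ and locally uniform in $r$; the Liouville step for affinely growing $\alpha$-harmonic functions is a secondary difficulty.
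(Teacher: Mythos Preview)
The statement is labelled a \emph{Conjecture} in the paper and no proof is offered there; the author explicitly presents it as an open problem, motivated by the borderline example $f(x)=|x_1|^{\alpha-2}\sign x_1$. There is therefore no paper proof to compare against.

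Your outline is a plausible strategy, and you correctly isolate the main difficulty in your closing paragraph: extending the ball representation~\eqref{eq:dynkinb} to a function $f$ that is merely locally integrable (or locally $\leb^{1/(2-\alpha)}$, or nonnegative) with $L_S f$ defined only pointwise. Neither of your two suggested routes actually closes this gap. The Dynkin-formula route is circular: Theorem~\ref{th:dynkin} in the form~\eqref{eq:dynkin1} requires $f\in\dom(L_S,\contbu)$, and in the form~\eqref{eq:dynkin0} requires $f=U_\lambda g$ for some $g$ --- neither of which you have; the phrase ``the pointwise-defined $L_S f$ genuinely acts as the generator'' is precisely the conjecture restated. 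The mollification route needs $L_S f_\eps=(L_S f)*g_\eps$, i.e.\ the interchange of the pointwise limit $t\to0^+$ with convolution by $g_\eps$; without uniform-in-$t$ control on $(P_t f-f)/t$ this interchange is as hard as the original problem. Two smaller issues: integrability of $(1+|z|)^{-d-\alpha}|f(z)|$ does \emph{not} imply that $f$ grows at most affinely (thin spikes can grow arbitrarily fast), so the Liouville step, which you phrase in terms of affine growth, would need to be reformulated; and convolution of a merely locally integrable, possibly unbounded $f$ with a kernel in $\conto\cap\leb^1$ need not be uniformly continuous, so ``uniformly continuous up to an affine summand'' is not yet justified.

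In short: the plan identifies the right obstacle but does not overcome it --- which is consistent with the paper's decision to state the result as a conjecture rather than a theorem.
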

\end{leftbar}

Almost everywhere convergence for functions in the $\leb^p$ domain follows by standard methods. The following result for $p \in [1, \tfrac{d}{\alpha})$ (and for $L_I$, not $L_S$; however, see Lemma~\ref{lem:aed} below for a version for $L_D$, $L_I$ and $L_H$) is proved in~\cite[Theorem~3.24]{bib:s01}.

\begin{leftbar}
\begin{lemma}
\label{lem:ae}
Let $p \in [1, \infty)$. If $f \in \dom(L_S, \leb^p)$, then $f \in \dom(L_S, x)$ for almost all $x$, and the pointwise limit in~\eqref{eq:S} is equal almost everywhere to the $\leb^p$ limit in~\eqref{eq:S}.
\end{lemma}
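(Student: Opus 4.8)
The plan is to reduce the assertion to the (continuous-time) maximal ergodic theorem for the semigroup $P_t$.

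Write $g = L_S f \in \leb^p$. Since $f$ lies in the domain of the generator $L_S$ on $\leb^p$, the general theory of strongly continuous semigroups gives that $s \mapsto P_s f$ is continuously differentiable in $\leb^p$ with $\tfrac{d}{ds} P_s f = P_s g$, and hence
\[
 P_t f - f = \int_0^t P_s g \, ds ,
\]
a Bochner integral of an $\leb^p$-valued function; thus $\tfrac1t(P_t f - f) = A_t g$, where $A_t g := \tfrac1t \int_0^t P_s g \, ds$. As $(s, x) \mapsto P_s g(x) = g * p_s(x)$ is jointly continuous on $(0, \infty) \times \R^d$ (for a fixed Borel representative of $g$), the Bochner integral may be evaluated pointwise; fixing Borel representatives of $f$ and $g$, Fubini's theorem together with the continuity in $t$ of both sides shows that, for almost every $x$, the identity $\tfrac1t(P_t f(x) - f(x)) = A_t g(x)$ holds simultaneously for every $t > 0$. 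It therefore suffices to prove that $A_t g(x) \to g(x)$ for almost every $x$ as $t \to 0^+$.

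The key point is that $(P_s)_{s > 0}$ is a strongly continuous semigroup of positivity-preserving contractions of $\leb^1$ which are also contractions of $\leb^\infty$; by the maximal ergodic theorem (see, e.g., \cite{bib:ds53}), the maximal operator $A^\ast h := \sup_{t > 0} \tfrac1t \int_0^t |P_s h| \, ds$ is then of weak type $(1,1)$ and bounded on $\leb^q$ for every $q \in (1, \infty)$. I would stress that one cannot simply dominate $A^\ast$ by the Hardy--Littlewood maximal function: already in the case $\alpha = d = 1$ the kernel $\sup_{0 < t \le 1} \tfrac1t \int_0^t p_s(z) \, ds$ admits no integrable radially decreasing majorant, so the maximal ergodic theorem is genuinely needed. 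Granting the maximal inequality, the a.e.\ convergence $A_t g \to g$ follows by the usual density argument: for $h$ in the dense subspace $\schw \subset \leb^p$ one has $P_s h \to h$ uniformly as $s \to 0^+$, hence $A_t h \to h$ uniformly and in particular a.e.; decomposing $g = h + (g - h)$ with $\|g - h\|_p$ small and using $\limsup_{t \to 0^+} |A_t g - g| \le A^\ast(g - h) + |g - h|$ a.e.\ together with the maximal inequality, the exceptional set has measure tending to $0$ as $\|g - h\|_p \to 0$, so $A_t g \to g$ almost everywhere.

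Combining the two steps, $\tfrac1t(P_t f(x) - f(x)) \to g(x)$ for almost every $x$, which is precisely the statement that $f \in \dom(L_S, x)$ for almost every $x$ with $L_S f(x) = g(x)$ a.e. The only substantial ingredient is the maximal inequality for the averaged semigroup; the remaining steps — the fundamental theorem of calculus for the semigroup, the passage between the $\leb^p$-valued Bochner integral and its pointwise values, and the density reduction — are routine.
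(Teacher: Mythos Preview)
Your proof is correct and takes a genuinely different route from the paper. You use the identity $\tfrac{1}{t}(P_t f - f) = \tfrac{1}{t}\int_0^t P_s (L_S f)\,ds$ and then invoke the Hopf--Dunford--Schwartz maximal ergodic theorem for the sub-Markovian semigroup $(P_t)$ to obtain almost-everywhere convergence of the time averages via a density argument. The paper instead writes $f = U_\lambda g$ with $g = \lambda f - L_S f$, expands $\tfrac{1}{t}(P_t f - f) - L_S f$ algebraically into three terms, and reduces everything to the almost-everywhere convergence $P_t h \to h$ for $h \in \leb^p$; this last fact follows from the Lebesgue differentiation theorem, since $p_t(z) = t^{-d/\alpha} p_1(t^{-1/\alpha} z)$ with $p_1$ radially decreasing and integrable, so that $\sup_{t>0} P_t|h|$ is controlled by the Hardy--Littlewood maximal function. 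Your observation that the \emph{time-averaged} kernel $\sup_{0<t\le 1}\tfrac{1}{t}\int_0^t p_s(z)\,ds$ admits no integrable radial majorant is correct and worth noting, but the paper sidesteps this entirely by never needing a maximal inequality for $A^\ast$: once one knows $P_s g(x) \to g(x)$ for almost every $x$, the convergence of $\tfrac{1}{t}\int_0^t e^{-\lambda s} P_s g(x)\,ds$ to $g(x)$ is elementary calculus. In short, the paper's argument is more specific to the fractional Laplacian (it exploits the radial monotonicity of $p_1$) but uses only the classical maximal function, while your argument is purely semigroup-theoretic and would carry over verbatim to the generator of any strongly continuous sub-Markovian semigroup on $\leb^p$.
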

\end{leftbar}

\begin{proof}
Let $\lambda > 0$ and $g = \lambda f - L_S f$, so that $f = U_\lambda g$. Then
\formula{
 \tfrac{1}{t} (P_t f - f) - L_S f & = \tfrac{1}{t} (1 - e^{-\lambda t}) P_t f + \tfrac{1}{t} (e^{-\lambda t} P_t U_\lambda g - U_\lambda g) + (\lambda f - g) \\
 & = \tfrac{1}{t} (1 - e^{-\lambda t}) P_t f - \tfrac{1}{t} \int_0^t e^{-\lambda s} P_s g ds + (\lambda f - g) .
}
As a corollary of Lebesgue differentiation theorem, $P_t f$ converges almost everywhere to $f$ as $t \to 0^+$ and $P_s g$ converges almost everywhere to $g$ as $s \to 0^+$. Hence, the right-hand side of the above formula converges almost everywhere to $0$ as $t \to 0^+$; here we use the fact that the pointwise definition of the integral in the right-hand side coincides with the Bochner's definition of the integral in $\leb^p$.
\end{proof}

The converse is not true: for any $p \in [1, \infty)$ there is a function $f \in \leb^p$ such that $f \in \dom(L_S, x)$ for almost all $x$ and and $L_S f(x)$, defined pointwise by~\eqref{eq:S}, is a $\leb^p$ function, but $f$ is not in $\dom(L_S, \leb^p)$.

\begin{example}
There is a positive measure $\mu$, supported in a compact set $K$ of Lebesgue measure zero, such that $f = u_\lambda * \mu$ is in $\leb^1$ and $\leb^\infty$ (if $\alpha < d$, then, for example, $K$ can be an arbitrary set of positive $\alpha$-capacity and Lebesgue measure $0$, and $\mu$ its equilibrium measure, see~\cite[Section~II.1]{bib:l72}). Since $L_S u_\lambda(x) = \lambda u_\lambda(x)$ for $x \ne 0$, it follows that $L_S f(x) = \lambda f(x)$ for $x \in \R^d \setminus K$. However, $f$ is not in $\dom(L_S, \leb^p)$, for otherwise we would have $\lambda f - L_S f = 0$, and so $f = U_\lambda (\lambda f - L_S f) = 0$, a contradiction with $f > 0$ everywhere.
\end{example}

Convergence to an $\leb^p$ function everywhere is a different problem, see Conjecture~\ref{con:sing}.

Regularity results for functions in $\dom(L_S, \leb^p)$ (and also in spaces of Hölder continuous functions or in Besov spaces) follow easily from the identification of $\dom(L_S, \leb^p)$ with the space of Bessel potentials of $\leb^p$ functions (see~\cite[Theorem~7.16]{bib:s01}), and the properties of the latter (see~\cite[Chapter~V]{bib:s70}). We illustrate these results with the following simple statement, accompanied with a short proof. Note that by using the weak Young's inequality, one can slightly refine the last statement.

\begin{leftbar}
\begin{proposition}
\label{prop:cont}
Let $1 \le p < p' \le s' \le \infty$ and $0 < r < R < \infty$ or $r = R = \infty$. Suppose that $f \in \dom(L_S, \leb^p)$.
\begin{enumerate}[label=(\alph*)]
\item If $L_S f$ is continuous at some $x$, then $f$ is continuous at $x$.
\item If $\ind_{B(x, R)} L_S f \in \leb^{p'}$ and $p' > \tfrac{d}{\alpha}$, then $f$ is uniformly continuous in $B(x, r)$.
\item If $\ind_{B(x, R)} L_S f \in \leb^{p'}$ and $\tfrac{1}{s'} > \tfrac{1}{p'} - \tfrac{\alpha}{d}$, then $\ind_{B(x, r)} f \in \leb^{s'}$.
\end{enumerate}
Here we denote $B(x, \infty) = \R^d$.
\end{proposition}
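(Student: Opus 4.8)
The plan is to use the identification of $\dom(L_S,\leb^p)$ with a space of Bessel potentials (Theorem~7.16 in~\cite{bib:s01}), in the form supplied by the resolvent. Fix $\lambda>0$, put $g=\lambda f-L_S f\in\leb^p$, and recall from Section~\ref{sec:S} that then $f=U_\lambda g=u_\lambda*g$. I will use the bound~\eqref{eq:potest}: near the origin $u_\lambda(z)\le C|z|^{-(d-\alpha)}$ when $\alpha<d$ (and $u_\lambda$ is locally bounded, resp.\ locally $\log$-integrable, when $\alpha>d=1$, resp.\ $\alpha=d=1$), while $u_\lambda(z)\le C(1+|z|)^{-d-\alpha}$ for all $z$; also $u_\lambda$ is continuous on $\R^d\setminus\{0\}$. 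Two local building blocks will do the work. \textbf{(i)} If $h\in\leb^p$ vanishes on $B(x,\rho)$, then $u_\lambda*h$ is continuous on $B(x,\rho')$ for every $\rho'<\rho$, by dominated convergence, since on $\{|w|\ge\rho-\rho'\}$ the kernel $u_\lambda$ is continuous, bounded, and decays fast enough at infinity to be dominated by an $\leb^q$ function ($\tfrac1p+\tfrac1q=1$). \textbf{(ii)} If $h\in\leb^a$ has compact support, then: when $a>\tfrac d\alpha$, $u_\lambda*h$ is bounded and uniformly continuous, because $u_\lambda\ind_{B_\rho}$ then lies in the conjugate space $\leb^{a/(a-1)}$ (indeed $(d-\alpha)\tfrac{a}{a-1}<d$ exactly when $a>\tfrac d\alpha$, and the cases $\alpha\ge d$ are trivial); and when $a\le\tfrac d\alpha$, $\ind_{B_\rho}(u_\lambda*h)\in\leb^b$ for every $b$ with $\tfrac1b>\tfrac1a-\tfrac\alpha d$, and also for $\tfrac1b=\tfrac1a-\tfrac\alpha d$ when this is positive, by the weak Young inequality (as $|z|^{-(d-\alpha)}\ind_{B_\rho}$ lies in weak $\leb^{d/(d-\alpha)}$). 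These are the local mapping properties of the Bessel kernel recorded in~\cite[Chapter~V]{bib:s70}.

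The core is a finite bootstrap along nested balls $B(x,r)\subset B(x,R_N)\subset\dots\subset B(x,R_0)=B(x,R)$ (all equal to $\R^d$ if $r=R=\infty$). If $p>\tfrac d\alpha$, then already $f\in\contbu$ by Section~\ref{sec:S}, so (a) and (b) are immediate and (c) holds since $f$ is bounded on $B(x,r)$; assume therefore $p\le\tfrac d\alpha$. By the global embedding in Section~\ref{sec:S}, $f\in\leb^{a_0}$ for some $a_0$ with $\tfrac1{a_0}$ as close to $(\tfrac1p-\tfrac\alpha d)_+$ as we like. Inductively suppose $\ind_{B(x,R_j)}f\in\leb^{a_j}$; since on $B(x,R_j)$ we have $g=\lambda f-L_S f$, the hypothesis $\ind_{B(x,R)}L_S f\in\leb^{p'}$ gives $g\ind_{B(x,R_j)}\in\leb^{\min(a_j,p')}$. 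Splitting $g=g\ind_{B(x,R_j)}+g\ind_{\R^d\setminus B(x,R_j)}$ and applying \textbf{(i)} to the second piece and \textbf{(ii)} to the first, we get on $B(x,R_{j+1})$ either that $f$ is uniformly continuous (when $\min(a_j,p')>\tfrac d\alpha$), or that $\ind_{B(x,R_{j+1})}f\in\leb^{a_{j+1}}$ with $\tfrac1{a_{j+1}}=\bigl(\tfrac1{\min(a_j,p')}-\tfrac\alpha d\bigr)_+$. As long as $a_j<p'$ the quantity $\tfrac1{a_j}$ drops by $\tfrac\alpha d$ at each step, so after at most $\lceil\tfrac d\alpha(\tfrac1p-\tfrac1{p'})\rceil+1$ steps either $f$ has been shown uniformly continuous near $x$, or we reach a ball on which $g\in\leb^{p'}$; one final application of \textbf{(ii)} then yields, if $p'>\tfrac d\alpha$, that $f$ is uniformly continuous in $B(x,r)$ --- which is (b) --- and, if $p'\le\tfrac d\alpha$, that $\ind_{B(x,r)}f\in\leb^{s'}$ for every $s'$ with $\tfrac1{s'}>\tfrac1{p'}-\tfrac\alpha d$, the bounded ball $B(x,r)$ absorbing the gap between $s'$ and the endpoint exponent --- which is the remaining case of (c). Finally (a) is a consequence of (b): if $L_S f$ is continuous at $x$ then $\ind_{B(x,R)}L_S f$ is bounded for $R$ small, hence lies in $\leb^{p'}$ on that ball for every $p'$; choosing $p'\in(\max(p,\tfrac d\alpha),\infty)$ and applying (b) makes $f$ uniformly continuous on a neighbourhood of $x$.

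The step I expect to be delicate is the bookkeeping of this bootstrap: ensuring it terminates after a number of steps depending only on $p,p',d,\alpha$, and, above all, absorbing the borderline exponent $p'=\tfrac d\alpha$, where the order-$\alpha$ Bessel potential of an $\leb^{d/\alpha}$ function lands only in $\mathrm{BMO}$ rather than in $\leb^\infty$. This is where the strict inequalities in the hypotheses, $p<p'$ and $\tfrac1{s'}>\tfrac1{p'}-\tfrac\alpha d$, are used essentially: at the last step one may replace $p'$ by any slightly smaller exponent and still land, on the bounded ball $B(x,r)$, in the required space $\leb^{s'}$.
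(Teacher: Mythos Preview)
Your argument is correct and follows a genuinely different route from the paper's. The paper writes $f=-(L_Sf)*u_\lambda+\lambda f*u_\lambda$ (with $g=L_Sf$, not $\lambda f-L_Sf$) and iterates this \emph{algebraically} $n$ times to obtain
\[
 f=-\sum_{k=1}^n\lambda^{k-1}(L_Sf)*u_\lambda^{*k}+\lambda^n f*u_\lambda^{*n}.
\]
Each term $(L_Sf)*u_\lambda^{*k}$ is then handled by a single near/far split of $L_Sf$ at the scale $R$ (so only the two balls $B(x,r)\subset B(x,R)$, not a nested sequence), while the remainder $f*u_\lambda^{*n}$ is automatically in $\contbu$ once $n>d/\alpha$, because then $\fourier u_\lambda^{*n}=(\lambda+|\xi|^\alpha)^{-n}$ is integrable and hence $u_\lambda^{*n}\in\leb^q$ for every $q\in[1,\infty]$. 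You instead bootstrap the \emph{local integrability of $f$} on a shrinking chain of balls, updating $g=\lambda f-L_Sf$ at each step. Your version is more hands-on and tracks the gain in $f$ directly; the paper's identity decouples $f$ from $L_Sf$ once and for all, so no running exponent $a_j$ and no intermediate radii $R_j$ are needed.

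One small point to clean up: in the global case $r=R=\infty$ your assertion $g\in\leb^{\min(a_j,p')}$ does not follow from $f\in\leb^{a_j}$ and $L_Sf\in\leb^{p'}$ alone, since on all of $\R^d$ a sum of an $\leb^{a_j}$ and an $\leb^{p'}$ function need not lie in either space. The repair is immediate: $L_Sf\in\leb^p$ as well (this is part of $f\in\dom(L_S,\leb^p)$), so by interpolation $L_Sf\in\leb^{a_j}$ whenever $p\le a_j\le p'$, and then $g=\lambda f-L_Sf\in\leb^{a_j}$; the global Young inequality now drives the bootstrap without any localization.
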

\end{leftbar}

\begin{proof}
Let $\lambda > 0$ and $g \in \leb^p$, and suppose that $g_1 = \ind_{B(x, R)} g$ is in $\leb^{p'}$. Observe that $u_\lambda \in \leb^{q'}$, where $\tfrac{1}{p'} + \tfrac{1}{q'} = 1 + \tfrac{1}{s'}$; indeed, then $\tfrac{1}{q'} > \tfrac{d - \alpha}{d}$. By Young's inequality, $g_1 * u_\lambda \in \leb^{s'}$, and $g_1 * u_\lambda \in \contbu$ when $s' = \infty$.

Define $g_2 = \ind_{\R^d \setminus B(x, r)} g = g - g_1$. When $r = R = \infty$, then $g_2 = 0$. Suppose that $0 < r < R < \infty$. For $y \in B(x, r)$, we have
\formula{
 g_2 * u_\lambda(y) & = g_2 * (\ind_{\R^d \setminus B(0, R - r)} u_\lambda)(y) .
}
Furthermore, by the estimate~\eqref{eq:potest}, $\ind_{\R^d \setminus B(0, R - r)} u_\lambda \in \leb^q$, where $\tfrac{1}{p} + \tfrac{1}{q} = 1$, and hence $g_2 * (\ind_{\R^d \setminus B(0, R - r)} u_\lambda) \in \contbu$. We conclude that $\ind_{B(x, r)} (g * u_\lambda) \in \leb^{s'}$, and $g * u_\lambda$ is uniformly continuous in $B(x, r)$ when $s' = \infty$.

Suppose now that $g = L_S f$ for some $f \in \dom(L_S, \leb^p)$, and that $\ind_{B(x, R)} g \in \leb^{p'}$. Iterating the formula
\formula{
 f = (\lambda f - g) * u_\lambda = -g * u_\lambda + \lambda f * u_\lambda ,
}
we obtain
\formula{
 f = -\sum_{k = 1}^n \lambda^{k-1} g * u_\lambda^{*k} + \lambda^n f * u_\lambda^{*n}
}
for arbitrary $n \ge 1$, where $u_\lambda^{*k}$ is the convolution of $k$ factors $u_\lambda$. By the first part of the proof (and iteration), for any $k \ge 1$ we have $\ind_{B(x, r)} (g * u_\lambda^{*k}) \in \leb^{s'}$, and $g * u_\lambda^{*k}$ is uniformly continuous in $B(x, r)$ when $s' = \infty$. Clearly, $u_\lambda^{*n} \in \leb^1$. Furthermore, if $n > \tfrac{d}{\alpha}$, then $\fourier u_\lambda^{*n}(\xi) = (\lambda + |\xi|^\alpha)^{-n}$ is integrable, and so $u_\lambda^{*n} \in \leb^\infty$. It follows that $u_\lambda^{*n} \in \leb^q$, where $\tfrac{1}{p} + \tfrac{1}{q} = 1$, and hence $f * u_\lambda^{*n} \in \contbu$. This proves~(b) and~(c). Clearly, (a) is a special case of~(b).
\end{proof}

%
%

\section{Isotropic stable Lévy process}
\label{sec:process}

A detailed treatment of isotropic stable Lévy processes can be found, for example, in~\cite{bib:bh86,bib:bbkrsv09}. Here we only give an informal introduction to the subject and discuss some benefits of the probabilistic approach.

For each starting point $x$ there exists a stochastic process $X_t$, $t \ge 0$, with the corresponding probability measure and expectation denoted by $\pr_x$ and $\ex_x$, with the following properties:
\begin{enumerate}[label=(\alph*)]
\item $X_t$ starts at $x$, that is, $\pr_x(X_0 = x) = 1$;
\item $X_t$ has independent increments, that is, $X_{t_n} - X_{s_n}$ are independent whenever $0 \le s_1 < t_1 \le s_2 < t_2 \le ...$;
\item $X_t$ has right-continuous paths with left limits, that is, the one-sided limits $X_{t+}$ and $X_{t-}$ exist and $X_{t+} = X_t$ for all $t$;
\item the distribution of $X_t$ under $\pr_x$ is equal to $p_t(x + z) dz$.
\end{enumerate}
We remark that a process satisfying conditions~(a) through~(c) is said to be a \emph{Lévy process}. A process $X_t$ is said to be a \emph{Feller process} if its state space is a locally compact metrisable space, it satisfies~(a) and (c), and also the following two conditions (which are weaker than~(b)): $X_t$ has the Markov property, and the transition operators $T_t f(x) = \ex_x f(X_t)$ form a strongly continuous semigroup on $\conto$. A process $X_t$ is \emph{isotropic} if it is invariant under orthogonal transformation of the state space. Finally, $X_t$ is \emph{stable} if the process $c X_t$ under $\pr_x$ has the same law as the process $X_{c^\alpha t}$ under $\pr_{c x}$.

A random time $\tau$ is said to be a \emph{Markov time} for $X_t$ if the event $\{\tau < t\}$ is measurable with respect to the (appropriately augmented) $\sigma$-algebra of sets generated by the family of random variables $\{X_s : s \in [0, t]\}$; intuitively this means that in order to determine whether $\tau < t$ it suffices to see the path of the process up to time $t$. It is known that first exit times of open sets:
\formula{
 \tau_D = \inf \{ t \ge 0 : X_t \notin D \}
}
are Markov times (in fact this is true whenever $D$ is a Borel set).

By~(d), the transition operators of $X_t$ are the operators $P_t$ introduced in Section~\ref{sec:S}, that is, $\ex_x f(X_t) = P_t f(x)$. Therefore, by Fubini, the $\lambda$-potential operators of $X_t$ are the $\lambda$-resolvent operators $U_\lambda$:
\formula{
 U_\lambda f(x) & = \int_0^\infty e^{-\lambda t} P_t f(x) dt = \ex_x \int_0^\infty e^{-\lambda t} f(X_t) dt .
}
Below we state Dynkin's formula, one of the fundamental results in the theory of Markov processes, in a version valid for an arbitrary Feller process. For non-random time $\tau$ it reduces to the well-known formula
\formula{
 P_t f & = f + \int_0^t P_s L_S f ds .
}
Its full strength is, however, presented when $\tau$ is the first exit time of a set.

\begin{leftbar}
\begin{theorem}[{Dynkin's formula, \cite[Theorem~5.1]{bib:d65}}]
\label{th:dynkin}
If $\tau$ is a Markov time, $\lambda \ge 0$, $g$ is measurable and
\formula{
 f(x) = \ex_x \int_0^\infty e^{-\lambda t} g(X_t) dt ,
}
then for all $x$ for which the integral in the right-hand side of the above formula is absolutely convergent,
\formula[eq:dynkin0]{
 \ex_x (e^{-\lambda \tau} f(X_\tau)) & = f(x) - \ex_x \int_0^\tau e^{-\lambda s} g(X_s) ds .
}
In particular, if $\lambda > 0$, $f \in \dom(L_S, \contbu)$ and $\lambda > 0$, then for all $x$,
\formula[eq:dynkin1]{
 \ex_x (e^{-\lambda \tau} f(X_\tau)) & = f(x) - \ex_x \int_0^\tau e^{-\lambda s} (\lambda I - L_S) f(X_s) ds .
}
If in addition $\ex_x \tau < \infty$ (or, more generally, if $\ex_x(\int_0^\tau |L_S f(X_s)| ds) < \infty$), then the same formula holds with $\lambda = 0$, that is,
\formula[eq:dynkin]{
 \ex_x f(X_\tau) & = f(x) + \ex_x \int_0^\tau L_S f(X_s) ds .
}
\end{theorem}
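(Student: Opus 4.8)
The plan is to derive all three formulae from the strong Markov property of the Feller process $X_t$, together with the fact, recorded in Section~\ref{sec:S}, that for $f \in \dom(L_S, \contbu)$ and $\lambda > 0$ the $\lambda$-resolvent operator inverts $\lambda I - L_S$, i.e.\ $U_\lambda(\lambda I - L_S) f = f$. Formula~\eqref{eq:dynkin0} is the substantive statement; \eqref{eq:dynkin1} and~\eqref{eq:dynkin} then follow by specialisation and a limiting argument.

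First I would prove~\eqref{eq:dynkin0}. Write $f = U_\lambda g$ and split the integral defining $f(X_0) = f(x)$ at the Markov time $\tau$: on the event $\{\tau < \infty\}$ one has, after the substitution $s = \tau + u$ in the tail, $\int_0^\infty e^{-\lambda s} g(X_s)\, ds = \int_0^\tau e^{-\lambda s} g(X_s)\, ds + e^{-\lambda\tau}\int_0^\infty e^{-\lambda u} g(X_{\tau + u})\, du$. Conditioning on the $\pr_x$-augmented $\sigma$-algebra $\mathscr{F}_\tau$ of events observable up to time $\tau$ and invoking the strong Markov property --- under which $(X_{\tau + u})_{u \ge 0}$ is, conditionally on $\mathscr{F}_\tau$, distributed as the process started afresh at $X_\tau$ --- the conditional expectation of the tail term equals $e^{-\lambda\tau} U_\lambda g(X_\tau) = e^{-\lambda\tau} f(X_\tau)$; on $\{\tau = \infty\}$ (for $\lambda > 0$) the tail is empty and $e^{-\lambda\tau} f(X_\tau)$ is read as $0$. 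Taking $\ex_x$ of both sides and rearranging gives~\eqref{eq:dynkin0}. The assumed absolute convergence of the integral defining $f(x)$ is exactly what licenses Fubini in the conditioning step and the splitting of the integral.

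To obtain~\eqref{eq:dynkin1} I would apply~\eqref{eq:dynkin0} with $g = (\lambda I - L_S) f$: for $f \in \dom(L_S, \contbu)$ and $\lambda > 0$ the resolvent identity gives $f = U_\lambda g$, so the hypothesis holds, and since $g$ is bounded one has $\ex_x \int_0^\infty e^{-\lambda t} |g(X_t)|\, dt \le \|g\|_\infty / \lambda < \infty$ for every $x$. Finally, for~\eqref{eq:dynkin} I would let $\lambda \to 0^+$ in~\eqref{eq:dynkin1}. The hypothesis $\ex_x \tau < \infty$ forces $\tau < \infty$ $\pr_x$-almost surely, so, $f$ being bounded, dominated convergence yields $\ex_x(e^{-\lambda\tau} f(X_\tau)) \to \ex_x f(X_\tau)$. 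Writing $(\lambda I - L_S) f = \lambda f - L_S f$, the contribution $\lambda\, \ex_x \int_0^\tau e^{-\lambda s} f(X_s)\, ds$ is bounded in absolute value by $\lambda \|f\|_\infty \ex_x \tau \to 0$, while $\ex_x \int_0^\tau e^{-\lambda s} L_S f(X_s)\, ds \to \ex_x \int_0^\tau L_S f(X_s)\, ds$ by dominated convergence, justified by the integrability assumption $\ex_x(\int_0^\tau |L_S f(X_s)|\, ds) < \infty$; the more general sufficient condition stated in the theorem is handled identically, using $\int_0^\tau |L_S f(X_s)|\,ds$ as the dominating function directly.

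The main obstacle is the careful handling of the strong Markov property at the (possibly infinite) Markov time $\tau$: one must fix the $\pr_x$-completed natural filtration, verify that first exit times of open sets are stopping times for it and that $X_\tau$ is $\mathscr{F}_\tau$-measurable, and treat the contribution of $\{\tau = \infty\}$ correctly --- trivial when $\lambda > 0$, and reduced to $\tau < \infty$ a.s.\ in the $\lambda = 0$ case via $\ex_x \tau < \infty$. Once that machinery is in place, the remainder is routine Fubini and dominated-convergence bookkeeping, all controlled by the stated integrability hypotheses; this is why we are content to quote the result from~\cite{bib:d65} rather than reproduce the full argument.
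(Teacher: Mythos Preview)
Your proposal is correct and follows exactly the same route as the paper's sketch: split the integral defining $U_\lambda g(x)$ at $\tau$ and apply the strong Markov property to the tail to get~\eqref{eq:dynkin0}, specialise with $g = (\lambda I - L_S) f$ via the resolvent identity for~\eqref{eq:dynkin1}, and pass to $\lambda \to 0^+$ by dominated convergence for~\eqref{eq:dynkin}. Your write-up merely supplies the details (handling of $\{\tau = \infty\}$, Fubini, the dominating functions) that the paper explicitly omits.
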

\end{leftbar}

\begin{proof}[Sketch of the proof]
Formula~\eqref{eq:dynkin0} follows by splitting the integral defining $U_\lambda g(x)$ into $\int_0^\tau$ and $\int_\tau^\infty$, and applying the strong Markov property for the latter one
; we omit the details. If $f \in \dom(L_S, \contbu)$, then $f = U_\lambda g$ for $g = \lambda f - L f$, and so formula~\eqref{eq:dynkin1} is merely a reformulation of~\eqref{eq:dynkin0}. Formula~\eqref{eq:dynkin} follows by dominated convergence.
\end{proof}

The identity~\eqref{eq:dynkinb} is simply the Dynkin's formula~\eqref{eq:dynkin} applied to the first exit time from a ball $B(x, r)$, together with explicit expressions for the expectation of $f(X_\tau)$ and $\int_0^\tau g(X_s) ds$:
\formula{
 \ex_x f(X_{\tau_{B(x, r)}}) & = \int_{\R^d \setminus \overline{B}_r} f(x + z) \pi_r(0, z) dz , \\
 \ex_x \int_0^{\tau_{B(x, r)}} g(X_s) ds & = \int_{B_r} g(x + z) \gamma_r(0, z) dz ,
}
where the Poisson kernel $\pi_r$ of a ball is given by~\eqref{eq:pir}, and the Green function of a ball is given by~\eqref{eq:gammar}.

We remark that for general Feller processes, $\ex_x \tau_D < \infty$ provided that $D$ is a sufficiently small neighbourhood of $x$ and $X_t$ is not constantly equal to $x$ under $\pr_x$ (that is, $x$ is not an absorbing state). In our case in fact $\ex_x \tau_D < \infty$ for all bounded $D$.

The \emph{Dynkin characteristic operator}, defined by the formula
\formula[eq:dynkinop]{
 L_D f(x) & = \lim_{r \to 0^+} \frac{\ex_x f(X_{\tau_{B(x, r)}}) - f(x)}{\ex_x \tau_{B(x, r)}}
}
for all functions $f$ for which the limit exists, is a way to localize the definition of $L_S$ for a general Feller process. In our case, \eqref{eq:dynkinop} reduces to~\eqref{eq:D}. The Dynkin characteristic operator is particularly useful when the state space is not the full space $\R^d$ and distribution theory cannot be used: by a general result, $L_D$ defined by~\eqref{eq:dynkinop} is an extension of $L_S$ with domain $\dom(L_S, \conto)$, and if $f \in \conto$ and $L_D f \in \conto$ (with $L_D f$ defined pointwise by~\eqref{eq:dynkinop}), then in fact $f \in \dom(L_S, \conto)$. We have seen, however, that even for the fractional Laplace operator in the full space $\R^d$ the Dynkin characteristic operator is a convenient way to prove the equivalence of the singular integral and semigroup definitions of $L$.

Using probabilistic methods, one immediately obtains the following simple and well-known, but quite useful result, which is a pointwise version of formula~\eqref{eq:li}.

\begin{proposition}
\label{prop:pot:gen}
If $\alpha < d$, $|x - y|^{\alpha - d} g(y)$ is absolutely integrable in $y \in \R^d$, and $g$ is continuous at $x$ (or, more generally, $x$ is a Lebesgue point of $g$), then $I_\alpha g$ is in $\dom(L_D, x)$, and $L_D I_\alpha g(x) = -g(x)$ (where $I_\alpha$ is the Riesz potential operator, that is, the $0$-resolvent operator $U_0$ of the semigroup $P_t$). Consequently, a similar statement holds true for $L_I$, $L_S$ and $L_H$.
\end{proposition}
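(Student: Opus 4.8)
The plan is to read the result off Dynkin's formula~\eqref{eq:dynkin0} combined with M.~Riesz's explicit expression for the Green function of a ball. Recall from Section~\ref{sec:process} that $I_\alpha g = U_0 g$ is the $0$-resolvent (potential) operator of the isotropic $\alpha$-stable L\'evy process $X_t$, so that $I_\alpha g(x) = \ex_x \int_0^\infty g(X_s) \, ds$, and the hypothesis that $|x - y|^{\alpha - d} g(y)$ is absolutely integrable is exactly the statement that this integral converges absolutely at $x$. Dynkin's formula~\eqref{eq:dynkin0} then applies with $\lambda = 0$ and $\tau = \tau_{B(x, r)}$ (the expectation $\ex_x|I_\alpha g(X_{\tau_{B(x,r)}})|$ being finite by the usual splitting of $\int_0^\infty$ into $\int_0^\tau$ and $\int_\tau^\infty$), and together with the explicit formula for the expected occupation measure recorded in Section~\ref{sec:process} it gives
\formula{
 \ex_x I_\alpha g(X_{\tau_{B(x, r)}}) - I_\alpha g(x) & = - \ex_x \int_0^{\tau_{B(x, r)}} g(X_s) \, ds = - \int_{B_r} g(x + z) \gamma_r(0, z) \, dz .
}
Dividing by $\ex_x \tau_{B(x, r)} = \int_{B_r} \gamma_r(0, z) \, dz$ and recalling the definition~\eqref{eq:dynkinop} of the Dynkin characteristic operator, the assertion $L_D I_\alpha g(x) = - g(x)$ reduces to showing that the $\gamma_r(0, z) \, dz$-weighted average of $z \mapsto g(x + z)$ over $B_r$ converges to $g(x)$ as $r \to 0^+$.

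To prove this averaging property I would use two elementary facts about $\gamma_r(0, \cdot)$. First, $0 \le \gamma_r(0, z) \le c_{d,-\alpha} |z|^{\alpha - d}$ for all $z$: this is immediate probabilistically, since the occupation time of any set before exiting $B_r$ is dominated by the total occupation time, whose density is the Riesz kernel $c_{d,-\alpha} |z|^{\alpha - d}$ for $\alpha < d$; it may also be read off from the explicit formula for $\gamma_r(0,z)$ in Section~\ref{sec:riesz}. Second, $\ex_x \tau_{B(x, r)} = \int_{B_r} \gamma_r(0, z) \, dz = C(d, \alpha) \, r^\alpha$, again from Section~\ref{sec:riesz}. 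When $g$ is continuous at $x$ the conclusion is immediate: given $\eps > 0$, choose $\delta$ with $|g(x + z) - g(x)| < \eps$ whenever $|z| < \delta$; then for $r < \delta$ the weighted average of $g(x + \cdot)$ over $B_r$ differs from $g(x)$ by at most $\eps$. When $x$ is only assumed to be a Lebesgue point of $g$, the point is to show that
\formula{
 \int_{B_r} |g(x + z) - g(x)| \, |z|^{\alpha - d} \, dz & = o(r^\alpha) \qquad \text{as $r \to 0^+$.}
}
I would obtain this by decomposing $B_r$ into the dyadic shells $\set{2^{-k-1} r < |z| \le 2^{-k} r}$, $k \ge 0$: on the $k$-th shell $|z|^{\alpha - d}$ is comparable to $(2^{-k} r)^{\alpha - d}$, while $\int_{\set{|z| \le 2^{-k} r}} |g(x + z) - g(x)| \, dz = (2^{-k} r)^d |B| \, \omega(2^{-k} r)$, where $\omega(\rho) \to 0$ as $\rho \to 0^+$ by the Lebesgue point property and $\omega$ is bounded near $0$ because the hypothesis forces $g$ to be locally integrable near $x$. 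Summing the resulting bounds over $k$ yields a constant times $r^\alpha \sum_{k \ge 0} 2^{-k \alpha} \omega(2^{-k} r)$, which is $o(r^\alpha)$ by dominated convergence in the variable $k$. Together with the two facts above this gives $L_D I_\alpha g(x) = - g(x)$.

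The statements for $L_I$, $L_S$ and $L_H$ follow at no extra cost. Absolute integrability of $|x - y|^{\alpha - d} g(y)$ implies (by a routine Tonelli computation) that $I_\alpha g$ is locally integrable near $x$, so Lemma~\ref{lem:dip} gives $I_\alpha g \in \dom(L_I, x)$ with $L_I I_\alpha g(x) = L_D I_\alpha g(x) = - g(x)$, and then Lemmas~\ref{lem:pis} and~\ref{lem:pih} give $L_S I_\alpha g(x) = L_H I_\alpha g(x) = - g(x)$ as well. The only genuinely delicate point in the whole argument is the Lebesgue-point refinement in the second paragraph; everything else is a direct combination of Dynkin's formula with the closed-form kernels already assembled in Section~\ref{sec:riesz}.
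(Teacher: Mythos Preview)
Your proof is correct and follows the same overall strategy as the paper: apply Dynkin's formula~\eqref{eq:dynkin0} with $\lambda = 0$ and $\tau = \tau_{B(x,r)}$ to reduce the question to showing that the $\gamma_r(0,\cdot)$-weighted average of $g(x+\cdot)$ over $B_r$ tends to $g(x)$, and then invoke Lemmas~\ref{lem:dip}, \ref{lem:pis}, \ref{lem:pih} for the passage to $L_I$, $L_S$, $L_H$.

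The only real difference is in how the averaging step is handled at a Lebesgue point. The paper uses the scaling $\gamma_r(0,z) = r^{\alpha - d}\gamma_1(0, z/r)$ to rewrite the quotient as $\bigl(\int_B \gamma_1(0,y)\,dy\bigr)^{-1}\int_B g(x+ry)\gamma_1(0,y)\,dy$ and then appeals (without spelling it out) to the standard approximate-identity result for kernels with a radially decreasing integrable majorant. You instead keep $\gamma_r$ as is, bound it by the Riesz kernel $c_{d,-\alpha}|z|^{\alpha-d}$, and run an explicit dyadic-shell argument. Both are standard and equivalent ways to reach the same conclusion; your version is more self-contained, the paper's slightly slicker. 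You are also more careful than the paper in checking that $I_\alpha g$ is locally integrable near $x$, which is needed to invoke Lemmas~\ref{lem:pis} and~\ref{lem:pih}.
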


\begin{proof}
By Lemmas~\ref{lem:dip}, \ref{lem:pis} and~\ref{lem:pih}, it suffices to prove the result for $L_D$. By the assumption, the integral $f(y) = I_\alpha g(y) = \ex_y \int_0^\infty g(X_s) ds$ is absolutely convergent when $y = x$, and so we may use Dynkin's formula~\eqref{eq:dynkin0} with $\lambda = 0$. It follows that
\formula{
 L_D f(x) & = \lim_{r \to 0^+} \frac{\ex_x f(X_{\tau_{B(x, r)}}) - f(x)}{\ex_x \tau_{B(x, r)}} \\
 & = -\lim_{r \to 0^+} \frac{1}{\ex_x \tau_{B(x, r)}} \, \ex_x \int_0^{\tau_{B(x, r)}} g(X_s) ds \\
 & = -\lim_{r \to 0^+} \frac{1}{\ex_0 \tau_B} \, \ex_0 \int_0^{\tau_B} g(x + r X_t) dt \\
 & = -\lim_{r \to 0^+} \expr{\int_B \gamma_1(0, y) dy}^{-1} \int_B g(x + r y) \gamma_1(0, y) dy .
}
If $g$ is continuous at $x$, or if $x$ is a Lebesgue point of $g$, then the limit in the right-hand side is equal to $g(x)$, as desired.
\end{proof}

We conclude this article with another application of the probabilistic method, which proves the last statement of Theorem~\ref{th:main}. It is a version of Lemma~\ref{lem:ae} for the Dynkin's definition $L_D$ of the fractional Laplace operator.

\begin{leftbar}
\begin{lemma}
\label{lem:aed}
Let $p \in [1, \infty)$. If $f \in \dom(L_S, \leb^p)$, then $f \in \dom(L_D, x)$, $f \in \dom(L_I, x)$, $f \in \dom(L_S, x)$ and $f \in \dom(L_H, x)$ for almost all $x$, and for all $x$ at which $L_S f$ is continuous. Furthermore, the pointwise limits in~\eqref{eq:D}, \eqref{eq:I}, \eqref{eq:S} and~\eqref{eq:H} are equal almost everywhere to the $\leb^p$ limit in~\eqref{eq:S}.
\end{lemma}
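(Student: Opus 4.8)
The plan is to reduce everything to the Dynkin's definition and then argue as in the proof of Lemma~\ref{lem:ae}. Since any $f \in \leb^p$ ($p \in [1, \infty)$) is locally integrable and $(1 + |z|)^{-d - \alpha} |f(x + z)|$ is integrable for \emph{every} $x$, Lemmas~\ref{lem:dip}, \ref{lem:pis} and~\ref{lem:pih} show that once we know $f \in \dom(L_D, x)$ we automatically get $f \in \dom(L_I, x) \cap \dom(L_S, x) \cap \dom(L_H, x)$ with all the pointwise values equal; and by Lemma~\ref{lem:ae} this common value coincides, for almost all $x$, with the $\leb^p$-limit $L_S f$ in~\eqref{eq:S}. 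Hence it suffices to prove that, for almost all $x$ and for all $x$ at which $L_S f$ is continuous, $\lim_{r \to 0^+} \int_{\R^d} (f(x + z) - f(x)) \tilde{\nu}_r(z) \, dz = L_S f(x)$.

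The first step is to turn the Dynkin integral into a convolution against a nice kernel. Mollify: let $f_\eps = f * g_\eps$ with $g_\eps$ a smooth, compactly supported approximate identity. As in the proof of Lemma~\ref{lem:dynkin}, $f_\eps \in \dom(L_S, \conto)$, $L_S f_\eps = (L_S f) * g_\eps$, and $f_\eps \to f$, $L_S f_\eps \to L_S f$ in $\leb^p$ as $\eps \to 0^+$. Applying~\eqref{eq:dynkinb} with $y = 0$ to $f_\eps$, using that $\pi_r(0, z) \, dz$ is a probability measure together with~\eqref{eq:gammapinu}, and the scaling relations $\gamma_r(0, z) = r^{\alpha - d} \gamma_1(0, z / r)$, $\int_{B_r} \gamma_r(0, z) \, dz = a_1 r^\alpha$ (self-similarity of the stable process, or the explicit formulas of Section~\ref{sec:riesz}; here $a_1 = \int_{B_1} \gamma_1(0, z) \, dz$), one rewrites the Dynkin integral for $f_\eps$ as $\int_{\R^d} (f_\eps(x + z) - f_\eps(x)) \tilde{\nu}_r(z) \, dz = \bigl( (L_S f_\eps) * \psi_r \bigr)(x)$, where $\psi_r(w) = r^{-d} \psi(w / r)$ and $\psi = a_1^{-1} \gamma_1(0, \cdot)$. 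Since $\tilde{\nu}_r, \psi_r \in \leb^1$, convolution against either is bounded on $\leb^p$, and letting $\eps \to 0^+$ the identity passes to $f$ and $L_S f$: for every fixed $r > 0$ and almost every $x$,
\[
 \int_{\R^d} (f(x + z) - f(x)) \tilde{\nu}_r(z) \, dz = \bigl( (L_S f) * \psi_r \bigr)(x) .
\]

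Now $\psi$ is a probability density, supported in $B_1$, radial and decreasing in $|z|$, and bounded by $C \min(|z|^{\alpha - d}, 1)$ (with $|z|^{\alpha - d}$ replaced by $\log(2 / |z|)$ when $\alpha = d = 1$); in particular it has a radially decreasing integrable majorant. Hence $\{\psi_r\}_{r > 0}$ is an approximate identity of the standard type, so $(L_S f) * \psi_r \to L_S f$ as $r \to 0^+$ both in $\leb^p$ and almost everywhere — and pointwise at every point of continuity of $L_S f$. Combined with the previous display this gives the claimed limit for almost every $x$; for a point $x$ at which $L_S f$ is continuous, Proposition~\ref{prop:cont}(a) makes $f$ continuous at $x$ as well, which is what is needed to remove the per-$r$ exceptional sets there.

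The main obstacle is precisely this last reconciliation: upgrading the identity of the second step, valid a priori only for each fixed $r$ and almost every $x$, to a genuine limit as $r \to 0^+$ (and to validity at the distinguished continuity points of $L_S f$). One handles it by a continuity-in-$r$ argument for the left-hand side: after the substitution $z = r w$ it equals $r^{-\alpha}\bigl((f * K_r)(x) - \|K\|_1 f(x)\bigr)$ with $K$ a fixed $\leb^1$ kernel supported in $\R^d \setminus B_1$ and $K_r(w) = r^{-d} K(w / r)$, so one needs continuity of $r \mapsto (f * K_r)(x)$ for almost every $x$, which follows from continuity of $r \mapsto K_r$ in $\leb^1$ and local integrability of $f$. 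An equivalent route, in the probabilistic spirit of this section, is to imitate the proof of Lemma~\ref{lem:ae} directly with the semigroup kernel $p_t$ replaced by the exit distribution of a ball: writing $f = U_\lambda g$ with $g = \lambda f - L_S f$, Dynkin's formula~\eqref{eq:dynkin0} applied to $\tau_{B(x, r)}$ gives
\[
 \frac{\ex_x f(X_{\tau_{B(x, r)}}) - f(x)}{\ex_x \tau_{B(x, r)}} = \frac{\ex_x\bigl((1 - e^{-\lambda \tau_{B(x, r)}}) f(X_{\tau_{B(x, r)}})\bigr)}{\ex_x \tau_{B(x, r)}} - \frac{1}{\ex_x \tau_{B(x, r)}} \, \ex_x \int_0^{\tau_{B(x, r)}} e^{-\lambda s} g(X_s) \, ds ,
\]
and self-similarity reduces each term on the right to an average of $f$ (resp.\ $g$) against a rescaled expected-occupation measure; the Lebesgue differentiation theorem then supplies the a.e.\ limits $\lambda f(x)$ and $g(x)$, the only work being the bookkeeping of the $e^{-\lambda s}$-corrections and of the tails of those measures (which rests on standard estimates for the Poisson kernel and for exit times of balls of the stable process).
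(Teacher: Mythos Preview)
Your reduction to $L_D$ via Lemmas~\ref{lem:dip}, \ref{lem:pis}, \ref{lem:pih} is correct, and the mollification argument does give the $\leb^p$ identity $\int(f(\cdot+z)-f(\cdot))\tilde\nu_r(z)\,dz = (L_S f)*\psi_r$ for each fixed $r$, with $\psi_r$ an approximate identity admitting a radially decreasing integrable majorant. The paper arrives at the same identity and also finishes by Lebesgue differentiation, so the two proofs converge at the end; the difference is in how the identity is obtained pointwise for all $r$ at once, and this is exactly where your first fix has a gap. Continuity of $r\mapsto K_r$ in $\leb^1$ together with local integrability of $f$ does \emph{not} imply continuity of $r\mapsto (f*K_r)(x)$ for almost every $x$: $\leb^1$-continuity of the kernel only yields $\leb^p$-continuity of the convolution, which says nothing about individual points. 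Because the singularity of $K=\tilde\nu_1$ at $|w|=1$ slides with $r$, there is no common dominating function over $r$ in a neighbourhood of any $r_0$, and one can cook up $f\in\leb^p$ for which $\int f(x+rw)K(w)\,dw$ diverges at an $x$-dependent value of $r$ for a set of $x$ of positive measure. So this step, as written, does not close the argument.

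The paper sidesteps the issue by never passing through per-$r$ identities. It writes $f=U_\lambda(\lambda f-g)$ with $g=L_Sf$, and for the almost-every $x$ where the resolvent integrals converge absolutely, applies Dynkin's formula~\eqref{eq:dynkin0}; this is a single identity valid for \emph{all} $r>0$ simultaneously. It then sends $\lambda\to 0^+$ (not $r\to 0^+$): the term with $(1-e^{-\lambda\tau})f(X_\tau)$ vanishes by dominated convergence, and the occupation integral increases to $\int_{B_r}g(x+z)\gamma_r(0,z)\,dz$, giving for a.e.\ $x$ and every $r$ at once the identity
\[
\int_{\R^d}(f(x+z)-f(x))\tilde\nu_r(z)\,dz \;=\; \Bigl(\int_B\gamma_1(0,y)\,dy\Bigr)^{-1}\!\int_B L_Sf(x+ry)\,\gamma_1(0,y)\,dy,
\]
after which Lebesgue differentiation finishes. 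Your second, probabilistic route is essentially this, but your sketch keeps $\lambda$ fixed and sends $r\to 0^+$, which forces you to control $\ex_x\bigl((1-e^{-\lambda\tau_{B(x,r)}})f(X_{\tau_{B(x,r)}})\bigr)/\ex_x\tau_{B(x,r)}$; this involves the joint law of $(\tau_B,X_{\tau_B})$ and is genuinely more work than the paper's order of limits.
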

\end{leftbar}

\begin{proof}
As before, by Lemmas~\ref{lem:dip}, \ref{lem:pis} and~\ref{lem:pih}, it suffices to prove the result for $L_D$. Let $\lambda > 0$, $f \in \dom(L_S, \leb^p)$ and $g = L_S f$, so that $f = U_\lambda (\lambda f - g)$. Observe for almost all $x$, the integrals defining $U_\lambda f(x) = \ex_x \int_0^\infty e^{-\lambda t} f(X_t) dt$ and $U_\lambda g(x) = \ex_x \int_0^\infty e^{-\lambda t} g(X_t) dt$ are absolutely convergent for all $\lambda > 0$. For such a point $x$ and for $r > 0$, by Dynkin's formula~\eqref{eq:dynkin0},
\formula{
 \frac{\ex_x f(X_{\tau_{B(x, r)}}) - f(x)}{\ex_x \tau_{B(x, r)}} & = \frac{\ex_x ((1 - e^{-\lambda \tau_{B(x, r)}}) f(X_{\tau_{B(x, r)}}))}{\ex_x \tau_{B(x, r)}} \\
 & \hspace*{5em} - \frac{1}{\ex_x \tau_{B(x, r)}} \, \ex_x \int_0^{\tau_{B(x, r)}} e^{-\lambda s} (\lambda f - g)(X_s) ds .
}
In particular, the integrals in the right-hand side are absolutely convergent. Consider the limit $\lambda \to 0^+$. In the former integral in the right-hand side, $1 - e^{\lambda \tau_{B(x, r)}}$ decreases to $0$, and so the integral converges to $0$. The latter integral is a convolution of $\lambda f - g$ with a positive kernel function, which increases as $\lambda \to 0^+$ to the integrable kernel function $\gamma_r(0, z)$. It follows that for almost all $x$ one can pass to the limit under the integral sign, and hence, as in the proof of Proposition~\ref{prop:pot:gen},
\formula{
 \frac{\ex_x f(X_{\tau_{B(x, r)}}) - f(x)}{\ex_x \tau_{B(x, r)}} & = \frac{1}{\ex_x \tau_{B(x, r)}} \, \ex_x \int_0^{\tau_{B(x, r)}} g(X_s) ds \\
 & = \expr{\int_B \gamma_1(0, y) dy}^{-1} \int_B g(x + r y) \gamma_1(0, y) dy .
}
By Lebesgue's differentiation theorem, for almost all $x$ the right-hand side converges to $g(x)$ as $r \to 0^+$, as desired.

Finally, note that every time in the above argument a condition is satisfied for almost all $x$, it is also satisfied for all $x$ at which $f$ and $g$ are continuous. Furthermore, by Proposition~\ref{prop:cont}, continuity of $g$ at some $x$ implies continuity of $f$ at $x$.
\end{proof}

%
%

\section*{Acknowledgements}

I thank Krzysztof Bogdan, Bartłomiej Dyda and René Schilling for stimulating discussions on the subject of this article.

%
%

%
%

\end{document}